\documentclass[11pt, letterpaper]{amsart}
\usepackage[active]{srcltx}
\usepackage{calc,amssymb,amsthm,amsmath, ulem}
\usepackage{alltt}
\RequirePackage[dvipsnames,usenames]{color}

\normalem
\input{kmacros3.sty}
\input{xy}
\xyoption{all}

\DeclareMathOperator{\Id}{{Id}}
\newcommand{\F}{\mathbb{F}}
\renewcommand{\:}{\colon}
\newcommand{\cf}{{\itshape cf.} }
\newcommand{\eg}{{\itshape e.g.} }
\newcommand{\ie}{{\itshape i.e.} }
\newcommand{\etale}{{\'e}tale }
\renewcommand{\m}{\mathfrak{m}}
\DeclareMathOperator{\Tr}{Tr}
\newcommand{\Tt}{{\mathfrak{T}}}
\newcommand{\RamiT}{\mathcal{R}_{\Tt}}
\DeclareMathOperator{\length}{length}
\DeclareMathOperator{\Fitt}{Fitt}

\newcommand{\Frob}[2]{{#1}^{1/p^{#2}}}
\newcommand{\Frobp}[2]{{(#1)}^{1/p^{#2}}}
\newcommand{\FrobP}[2]{{\left(#1\right)}^{1/p^{#2}}}
\newcommand{\bp}{\mathfrak{p}}
\newcommand{\bq}{\mathfrak{q}}
\newcommand{\Ram}{\mathrm{Ram}}



\usepackage[left=1.15in,top=1.06in,right=1.15in,bottom=1.06in]{geometry}

\usepackage{setspace}
\usepackage{hyperref}

\usepackage{enumerate}

\usepackage{graphicx}

\usepackage[all,cmtip]{xy}
%
%
%
%
%
%
%

\usepackage{verbatim}


\renewcommand{\O}{\mathcal O}

\begin{document}

\title[Test ideals and finite morphisms]{On the behavior of test ideals under finite morphisms}
\author{Karl Schwede and Kevin Tucker}

\address{Department of Mathematics\\ The Pennsylvania State University\\ University Park, PA, 16802, USA}
\email{schwede@math.psu.edu}
\address{Department of Mathematics\\ University of Utah\\ Salt Lake City, UT, 84112, USA}
\email{kevtuck@umich.edu}

\subjclass[2000]{14B05, 13A35}
\keywords{tight closure, test ideal, multiplier ideal, Frobenius map, Frobenius splitting, finite map, separable map, inseparable map, wild ramification, ramification divisor, trace map}
\thanks{The first author was partially supported by a National Science Foundation postdoctoral fellowship, RTG grant number 0502170 and NSF DMS 1064485/0969145}
\thanks{The second author was partially supported by RTG grant number 0502170 and a National Science Foundation postdoctoral fellowship DMS 1004344}

\begin{abstract}
We derive precise transformation rules for test ideals under an arbitrary finite surjective morphism $\pi \: Y \to X$ of normal varieties in prime characteristic $p > 0$.  Specifically, given a \mbox{$\bQ$-divisor} $\Delta_{X}$ on $X$ and any $\O_{X}$-linear map $\Tt \: K(Y) \to K(X)$, we associate a \mbox{$\bQ$-divisor} $\Delta_{Y}$ on $Y$ such that $\Tt ( \pi_{*}\tau(Y;\Delta_{Y})) = \tau(X;\Delta_{X})$.  When $\pi$ is separable and $\Tt = \Tr_{Y/X}$ is the field trace, we have $\Delta_{Y} = \pi^{*} \Delta_{X} - \Ram_{\pi}$ where $\Ram_{\pi}$ is the ramification divisor.  If in addition $\Tr_{Y/X}(\pi_{*}\O_{Y}) = \O_{X}$, we conclude $\pi_{*}\tau(Y;\Delta_{Y}) \cap K(X) = \tau(X;\Delta_{X})$ and thereby recover the analogous transformation rule to multiplier ideals in characteristic zero.  Our main technique is a careful study of when an $\O_{X}$-linear map $F_{*} \O_{X} \to \O_{X}$ lifts to an $\O_{Y}$-linear map
$F_{*} \O_{Y} \to \O_{Y}$, and the results obtained about these liftings are of independent interest
as they relate to the theory of Frobenius splittings.  In particular, again assuming $\Tr_{Y/X}(\pi_{*}\O_{Y}) = \O_{X}$, we obtain transformation results for $F$-pure singularities under finite maps which mirror those for log canonical singularities in characteristic zero.   Finally we explore new conditions on the singularities of the ramification locus, which imply that, for a finite extension of normal domains $R \subseteq S$ in characteristic $p > 0$, the trace map $\Tr : \Frac S \to \Frac R$ sends $S$ onto $R$.

\end{abstract}
\maketitle
\numberwithin{equation}{theorem}

\section{Introduction}
\label{sec:Introduction}

The test ideal $\tau(R)$ of a commutative ring $R$ of prime
characteristic $p>0$ is an important measure of singularities and was first described as the set of test elements in the celebrated theory of tight closure pioneered by Hochster and Huneke \cite{HochsterHunekeTC1, HunekeTightClosureBook}.  Following the extension of test ideals to the study of singularities of pairs
 \cite{HaraYoshidaGeneralizationOfTightClosure,TakagiInterpretationOfMultiplierIdeals}, more recent interest in test ideals largely stems from their connections to analytic constructions in complex algebraic geometry.
The multiplier ideal $\mJ(X; \Delta)$ of a $\Q$-divisor $\Delta$ on a
complex algebraic variety $X$ is a critical tool in the study of higher dimensional birational geometry:
surprisingly, after reduction to characteristic $p \gg 0$, the multiplier ideal and the test ideal coincide \cite{SmithMultiplierTestIdeals,HaraInterpretation,HaraYoshidaGeneralizationOfTightClosure,TakagiInterpretationOfMultiplierIdeals}.

This somewhat mysterious correspondence is a great source of intuition and has inspired (and been used to prove) numerous results about both test ideals and multiplier ideals alike, \eg \cite{TakagiFormulasForMultiplierIdeals,BlickleMustataSmithDiscretenessAndRationalityOfFThresholds}.
Nevertheless, these objects are inherently very different.  For
example, many properties of multiplier ideals which follow immediately
from their description via a log resolution of singularities are often
difficult to prove (and occasionally even false) for test ideals, \eg
\cite{BlickleMustataSmithDiscretenessAndRationalityOfFThresholds,MustataYoshidaTestIdealVsMultiplierIdeals}.
Conversely, deep theorems about multiplier ideals requiring subtle
vanishing theorems are often very easy to show for test ideals, \eg \cite{TakagiFormulasForMultiplierIdeals}.

In this paper, we derive transformation rules for test ideals under
arbitrary finite surjective morphisms of normal varieties.
Previous results \cite{HochsterHunekeFRegularityTestElementsBaseChange, BravoSmithBehaviorOfTestIdealsUnderSmooth,HaraYoshidaGeneralizationOfTightClosure,HaraTakagiOnAGeneralizationOfTestIdeals}
apply only in very limited special cases, generally requiring
morphisms that are \etale in codimension one together with further
restrictions.  As finite integral ring extensions have featured
prominently in the development of tight closure, the transformation
rules obtained herein also fit naturally into this largely algebraic theory -- even though their statements (and motivation) are more geometric.

Before describing our results in full detail, we briefly review the behavior of the multiplier ideal under a finite surjective morphism
$\pi \: Y \to X$ of normal complex varieties.  If $\Delta_{X}$ is a $\Q$-divisor on $X$ such that $K_X + \Delta_{X}$ is $\bQ$-Cartier, one relates the multiplier ideal of $(X, \Delta_X)$ with the multiplier ideal of an appropriately defined pair $(Y, \Delta_Y)$.  Explicitly, if $\Ram_{\pi}$ denotes the ramification divisor of $\pi$, then $\Delta_{Y} := \pi^* \Delta_X - \Ram_{\pi}$ is chosen so as to force the equality $\pi^*(K_X + \Delta_X) = K_Y + \Delta_Y$.  Standard techniques relying on resolution of singularities can then be used to show
\begin{equation}
\label{eqn:multIdealFact}
\pi_* \mJ(Y; \Delta_Y) \cap \C(X) = \mJ(X; \Delta_{X})
\end{equation}
where $\C(X)$ is the function field of $X$.  In other words if $X = \Spec R$, then $\C(X)$ is simply the fraction field of $R$.
Proofs and further discussion can be found in either \cite[Theorem 9.5.42]{LazarsfeldPositivity2} or \cite[Proposition 2.8]{EinMultiplierIdealsVanishing}.  It is worth mentioning that this transformation rule does not hold for the multiplier ideal in characteristic $p > 0$, see Examples \ref{ex:WithoutTraceSurjectiveThingsAreFalse}, \ref{ex:artinsD4examplecover} and the surrounding discussion.

One might expect an analogous relation to Equation (\ref{eqn:multIdealFact}) to hold for test ideals when the map $\pi \: Y \to X$ is separable (\ie  surjective and generically {\'e}tale, so that the induced inclusion of function fields $K(X) \subseteq K(Y)$ is separable).
However, this hope is easily shown to be false;  see Example \ref{ex:WithoutTraceSurjectiveThingsAreFalse}.  Essentially, arithmetic problems arise in the presence
of wild ramification in positive characteristic.
In the result below, we recover the analogue of \eqref{eqn:multIdealFact} under a tameness condition for the ramification appearing in $\pi$.

\vskip 6pt
\hskip -12pt
{\bfseries Corollary  \ref{cor:IfTraceSurjectiveThenIntersection}}{ \itshape
Suppose that $\pi \: Y \to X$ is a finite separable morphism of normal
$F$-finite varieties in characteristic $p>0$ and that $\Delta_{X}$ is
a $\Q$-divisor on $X$.  Assume that the trace map $\Tr_{Y/X} \: K(Y)
\to K(X)$ on the function fields satisfies $\Tr_{Y/X}(\pi_{*}\O_{Y}) = \O_{X}$.  If $\Delta_{Y} = \pi^{*}\Delta_{X} - \Ram_{\pi}$ where $\Ram_{\pi}$ is the ramification divisor, then
\[
   \pi_* \left(\tau(Y; \Delta_{Y})\right) \cap K(X) = \tau(X; \Delta_X).
\]
The natural generalization of this result holds for the test ideals of triples $(X, \Delta, \ba^t)$ as well.}
\vskip 6pt

For a separable morphism of normal varieties $\pi \: Y \to X$, the
(non-zero) fraction field trace $\Tr_{Y/X} \: K(Y) \to K(X)$ always satisfies $\Tr_{Y/X}(\pi_{*}\O_{Y}) \subseteq \O_{X}$.  In other words, one may consider trace as a map of sheaves $\Tr_{Y/X} \: \pi_{*}\O_{Y} \to \O_{X}$, and thus we are apt to refer to the condition $\Tr_{Y/X}(\pi_{*}\O_{Y}) = \O_{X}$ in the above result simply by saying that the trace map is surjective.  In our main result stated below, we show how one may overcome the failure of this tameness condition by incorporating the trace map into the transformation rule itself.  Surprisingly, we are able to show a similar transformation rule for the test ideal under arbitrary (\ie not necessarily separable) surjective finite morphisms.

\begin{mainthm}
Assume that $\pi \: Y \to X$ is a finite surjective morphism of normal $F$-finite varieties in characteristic $p > 0$ and that $\Delta_X$ is a $\bQ$-divisor on $X$.
\begin{description}
\item[(Separable Case, Corollary \ref{cor:TraceTestIdealFormula})]  Suppose that $\pi$ is separable and $\Tr_{Y/X} \: K(Y) \to K(X)$ is the trace map.   If $\Delta_Y = \pi^* \Delta_X - \Ram_{\pi}$ where $\Ram_{\pi}$ is ramification divisor, then
    \[
        \Tr_{Y/X} \left( \pi_* \tau(Y; \Delta_{Y})\right) = \tau(X; \Delta_X).
    \]
\item[(General Case, Theorem \ref{thm:TraceTestIdealFormula})]  For an arbitrary $\pi$, any non-zero $\O_{X}$-linear map $\Tt \: K(Y) \to K(X)$ between the function fields of $Y$ and $X$ naturally corresponds to a (possibly non-effective) divisor $\RamiT$ on $Y$ linearly equivalent to $K_Y - \pi^* K_X$.  If $\Delta_{Y/\Tt} = \pi^* \Delta_X - \RamiT$, then
\[
        \Tt\left( \pi_* \tau(Y; \Delta_{Y/\Tt})\right) = \tau(X; \Delta_X).
    \]
\end{description}
The natural generalizations of these results hold for the test ideals of triples $(X, \Delta, \ba^t)$ as well.
\end{mainthm}

\noindent
See Section \ref{sec:CanModDuality} for the construction of the divisor $\RamiT$ noting that in the language of that section, $\sL = \O_X$.  In fact, the Separable Case is just a
specialization of the General Case where $\Tt := \Tr$ and thus $\RamiT = \Ram_{\pi}$; see Section \ref{sec:traceramifdiv}.

Notice that, in light of the role of the trace map in the Separable Case of the Main Theorem, surjectivity of trace is a natural hypothesis in Corollary \ref{cor:IfTraceSurjectiveThenIntersection}.
Conversely, in
Section~\ref{sec:SurjectivityOfTrace} we will explore some conditions which imply that the trace is surjective.  For example, consider the following result:
\vskip 6pt
\hskip -12pt
{\bf Corollary \ref{cor:EasyTraceSurjectivityViaFPurity}. }  {\it   Suppose that $\pi \: Y \to X$ is a finite separable map of $F$-finite normal integral schemes which is tamely ramified in codimension one.  Further suppose that $X$ is regular and that the reduced branch locus of $\pi$ is a divisor with $F$-pure singularities (for example, if the branch locus is a simple normal crossings divisor).  Then $\Tr_{Y/X}(\pi_* \O_Y) = \O_X$.}
\vskip 6pt
\noindent
Substantially more general variants of this result appear in Theorem \ref{thm:surjectivityOfTraceViaTestIdeals}.

Another important result appearing herein analyzes the behavior under finite morphisms of sharply $F$-pure singularities, which are a positive characteristic analog of log canonical singularities \cite{HaraWatanabeFRegFPure}.
Once again, the surjectivity of the trace map plays an essential role.  Briefly, in addition to the set up of Corollary \ref{cor:IfTraceSurjectiveThenIntersection},
assume further that $K_X + \Delta_X$ is $\bQ$-Cartier with index not divisible by $p$.  Then we show that $(X, \Delta_X)$ is sharply $F$-pure if and only if $(Y, \Delta_Y)$ is sharply $F$-pure; see Theorem \ref{thm:TraceSurjectiveImpliesFPureBehaves}.  This result should be compared with transformation rules for log canonical singularities under finite morphisms in characteristic zero, \cf \cite[Proposition 5.20]{KollarMori}, \cite[Proposition 20.2]{KollarFlipsAndAbundance}, and \cite[Proposition 3.16]{KollarSingularitiesOfPairs}.

The proofs of the results described above follow from the study of two fundamental and purely algebraic questions of independent interest.
Suppose that $R \subseteq S$ is a module-finite inclusion of normal $F$-finite domains with induced map of schemes $\pi \: Y = \Spec S \to X = \Spec R$, and let $K := \Frac(R)$ and $L := \Frac(S)$ denote the corresponding fraction fields.
\begin{enumerate}
\item  Does an $R$-linear map $\phi \: R^{1/p^e} \to R$ extend to an $S$-linear map $\bar\phi \: S^{1/p^e} \to S$, where $R^{1/p^{e}}$ (respectively $S^{1/p^e}$) is the ring of $p^{e}$-th roots of $R$ (respectively $S$) inside an algebraic closure of its fraction field?
\item  Given a non-zero $K$-linear map $\Tt : L \to K$, for which $R$-linear maps $\phi \: R^{1/p^e} \to R$ does there exist an $S$-linear map $\phi_{\Tt} \: S^{1/p^e} \to S$ such that $\Tt \circ \phi_{\Tt} = \phi \circ \Tt^{1/p^e}$?
\end{enumerate}
The maps $\phi \in \Hom_R(R^{1/p^e}, R)$ (somewhat abusively referred
to as \mbox{$p^{-e}$-linear}) in the above questions appear frequently in commutative algebra in the study of singularities via Frobenius techniques.  In particular, these maps are used to define
strong $F$-regularity, sharp $F$-purity, and the test ideal.  Important examples of such maps include Frobenius splittings (usually referred to as \emph{$F$-splittings}), which have historically been very useful in applying positive characteristic methods in representation theory; see \cite{BrionKumarFrobeniusSplitting}.

 In Section \ref{sec:LiftingCriterionTheorem}, we will give a precise answer to question (1) when $\pi$ is separable by associating certain geometric data. An oft-used duality argument assigns to each $R$-linear map $\phi \: R^{1/p^e} \to R$ a $\bQ$-divisor $\Delta_{\phi}$ such that $K_X + \Delta_{\phi} \sim_{\bQ} 0$ (compare with \cite{MehtaRamanathanFrobeniusSplittingAndCohomologyVanishing}, \cite{HaraWatanabeFRegFPure}, \cite{SchwedeFAdjunction}, and Section \ref{sec:CanModDuality} of this paper). 
The main technical tool used to prove Corollary~\ref{cor:IfTraceSurjectiveThenIntersection} and Theorem~\ref{thm:TraceSurjectiveImpliesFPureBehaves} is the following extension criterion for $\phi \: R^{1/p^{e}} \to R$ to $\bar{\phi} \: S^{1/p^{e}} \to S$ in terms of the associated divisor $\Delta_{\phi}$.
\vskip 6pt
\hskip -12pt
{\bf Theorem \ref{thm:mainliftingcriterion}. } {\it Using the above notation and assuming that $\pi$ is separable,  an $R$-linear map $\phi \: R^{1/p^e} \to R$ extends to an $S$-linear map $\bar{\phi} \:  S^{1/p^e} \to S$ if and only if $\pi^* \Delta_{\phi} \geq \Ram_{\pi}$.  Furthermore, in this case, one has $\Delta_{\bar{\phi}} = \pi^{*}\Delta_{\phi} - \Ram_{\pi}$ (\ie $K_{Y} + \Delta_{\bar{\phi}} = \pi^{*}(K_{X} + \Delta_{\phi})$).
}
\vskip 6pt
Critical to the proof of this result are two important properties of the trace map.
The first is, as stated above, that the trace map itself corresponds to the ramification divisor under the duality arguments used throughout; see Section \ref{sec:traceramifdiv}.  The second is a certain kind of functoriality property satisfied by the trace map.  Specifically, if $\phi$ extends to $\bar{\phi}$ as in question~(1), then $\Tr_{Y/X} \, \circ \,  \bar{\phi} = \phi \, \circ \, \Tr_{Y/X}^{1/p^e}$.  In private communications after the completion of this work, we learned that this commutativity has also been recently and independently observed by David Speyer in the study of a somewhat different (albeit related) question.

In fact, we are surprisingly able to show that this commutativity property largely characterizes the trace map; see Corollary \ref{cor:trcommutes} and Proposition \ref{prop.TraceIsOnlyMap}. In particular, questions (1) and (2) are intimately linked when $\Tt = \Tr_{Y/X}$.  The answer to question (2) for general $\Tt$ incorporates similar themes, and is the central ingredient in the proof of the General Case of the Main Theorem.

\begin{remark}
An earlier draft of this paper, which was posted on the arXiv, only
dealt with separable extensions.  The methods for the \emph{general case} are
similar and have therefore been included herein.  This
version of the paper also removes two appendices that appeared in previous versions.  It removes an appendix dealing with the behavior of
multiplier ideals in positive characteristic.  This material has been incorporated into
a separate paper \cite{BlickleSchwedeTuckerTestAlterations}, joint with Manuel Blickle.  The other appendix, on explicitly lifting finite maps under tame ramification, will become a separate work \cite{SchwedeTuckerExplicitLiftingOfFSplittings}.  Both appendixes can be viewed in older versions of the arXiv version of this paper.
\end{remark}

\vskip 12pt
\hskip -12pt{\it Acknowledgements:}  The authors would like to thank
Luchezar Avramov, Matthew Baker, Brian Conrad, and Joseph Lipman for
useful discussions on the relation between the trace map and the
ramification divisor,  Alexander Schmidt for pointing out his recent
work on various notions of tameness, as well as  Manuel Blickle, Dale Cutkosky,
Lawrence Ein, Steven Kleiman, J\'anos Koll\'ar, Mircea Musta{\c{t}}{\u{a}}, Karen
Smith and David Speyer for stimulating discussions on other aspects of the paper.  The
authors are also grateful to Wenliang Zhang and the referee for a careful reading of a
previous draft and for pointing out several typos.  Substantial
progress on the questions of this paper was made while attending the Pan-American Advanced Study Institute conference honoring Wolmer Vasconcelos in Olinda Brazil, August 2009.

\section{Notation, divisors, and $p^{-e}$-linear maps}
\label{sec:MapsDivisorsNotation}

\subsection{Basic conventions and notation}
\label{sec:conventions}

Throughout our investigations, all rings are assumed commutative with identity, Noetherian and excellent, and all schemes Noetherian and separated.  If $U = \Spec A$ is an affine scheme, we will repeatedly conflate algebraic and geometric notation by making no distinction between an $A$-module $M$ and the associated  quasi-coherent sheaf on $\Spec A$.  For instance, if $\rho \: U = \Spec B \to V = \Spec A$ is a morphism of affine schemes, we shall use $\rho^{*}M$ to denote $B \tensor_{A}M$; similarly, restricting the scalars on a $B$-module $N$ yields the $A$-module $\rho_{*}N$.

If $R$ is a ring with characteristic $p > 0$, the Frobenius or $p$-th power map can be thought of in several equivalent ways.  First and foremost, it is the ring homomorphism $F \: R \to R$ given by $r \mapsto r^{p}$.  As such, it induces an endomorphism $F \: X = \Spec R \to X$.  By the previous discussion applied to an $R$-module $M$, the $R$-module $F_{*}M$ has the same underlying Abelian group as $M$ but with scalar multiplication twisted by the Frobenius map.  In particular, we may consider the Frobenius map as a $R$-module homomorphism $F \: R \to F_{*}R$.  Similar discussions apply to the $e$-th iterate $F^{e}$ of Frobenius.  Furthermore, when $R$ is a domain we can form the ring of $p^{e}$-th roots $R^{1/p^{e}} = \{ r^{1/p^{e}} : r \in R \}$.  The inclusion $R \subseteq R^{1/p^{e}}$ is naturally identified with the $e$-th iterate of the Frobenius map after identifying $R^{1/p^{e}}$ with $F^{e}_{*}R$.  We remark that the strength and weakness of this latter notation lies in its ability to distinguish between $r \in R$ and $r^{1/p^{e}} \in  R^{1/p^{e}} \cong F^{e}_{*}R$.

We also recall the following definition.

\begin{definition}
\label{DefnFFinite} A scheme $X$ of prime characteristic $p>0$ is \emph{$F$-finite} if
the Frobenius endomorphism $F \: X \to X$ is finite.  In other words, $F_*\O_X$ is a finitely generated or coherent sheaf of $\O_{X}$-modules.  A ring $R$ is called $F$-finite if $X = \Spec R$ is $F$-finite.
\end{definition}

\noindent
  \begin{tabular}[c]{ll}
    {\scshape Convention:} & \parbox[t]{4.5in}{Throughout this paper,
      all characteristic $p>0$ schemes are
  assumed to be $F$-finite.}\\
  \end{tabular}

\vskip 6pt
\smallskip

Most rings and schemes arising geometrically (our primary interest) automatically satisfy the above assumption, as a scheme essentially of finite type over a perfect field is always $F$-finite.
Note that an $F$-finite scheme is locally excellent
\cite{KunzOnNoetherianRingsOfCharP}, and an $F$-finite affine scheme always has a dualizing complex \cite[Remark 13.6]{Gabber.tStruc}.

\subsection{Divisors}
\label{sec:divisors}

Fix a normal  irreducible scheme $X$, with function field $K.$  A \emph{$\bQ$-Divisor} $D$ is an element of  Div($X$) $\tensor_{\bZ} \bQ$.
Recall that $D$ is
\emph{$\bQ$-Cartier} in case there exists an integer $m > 0$ such that $mD$ is a Cartier or locally principal divisor (in particular, $mD$ is necessarily an integral divisor).  For any $\bQ$-divisor $D = \sum a_i D_i$ (where $D_i$ are distinct prime divisors), we use the notation $\lceil D \rceil$ to denote $\sum \lceil a_i \rceil D_i$.  Similarly, set $\lfloor D \rfloor = \sum \lfloor a_{i} \rfloor D_{i}$.  The $\bQ$-divisor  $D$ is \emph{effective} if all of its coefficients $a_{i}$ are non-negative, and we  say that $D' \leq D$ if $D - D'$ is effective.

Let us now describe how to pull-back divisors under a finite map, \cf \cite[Proof of Proposition 5.20]{KollarMori}.  Suppose $Y$ is a normal irreducible scheme with function field $L$ and that $\pi \: Y \to X$ is a finite surjective morphism.  Fix a $\bQ$-divisor $\Delta$ on $X$. There is an open set $U_{X} \subseteq X$ such that both $U_{Y} = \pi^{-1}(U_{X})$ and $U_{X}$ are regular and their respective complements each have codimension at least two.  Thus, $\Delta|_{U_{X}}$ is a $\Q$-Cartier divisor, and one can make sense of $(\pi|_{U_{Y}})^{*}(\Delta|_{U_{X}})$ on $U_{Y}$ by simply pulling back the local defining equations for each of the prime components $D_{i}$ of $\Delta$.  The $\Q$-divisor $\pi^{*}\Delta$ is then the uniquely determined by the property that $(\pi^{*}\Delta)|_{U_{Y}} = (\pi|_{U_{Y}})^{*}(\Delta|_{U_{X}})$.

Perhaps more explicitly, if $\Delta = \sum a_i D_i$, one may also define $\pi^* \Delta$ as
\[
 \pi^* \Delta := \sum_{C_j \subset Y} b_j C_j
\]
where the sum traverses all prime divisors $C_j$ on $Y$ and the coefficient $b_j$ is defined in the subsequent manner.  Set $D_j$ to be the image of $C_j$ in $X$ and let $a_j$ be the $D_j$-coefficient of $\Delta$.  Note that $C_j$ corresponds to a discrete valuation $\ord_{C_{j}} \colon L \setminus \{0\} \to \Z$ and let $t$ be a uniformizer for the discrete valuation ring (DVR) $\O_{X, D_j} \subseteq K \subseteq L$.  Then we have $b_j := a_j \cdot \ord_{C_{j}}(t)$.  The integer $\ord_{C_{j}}(t)$ is called the \emph{ramification index} of $\pi$ along~$C_{j}$.

We caution the reader that rounding operations on divisors do not in general commute with pull-back.   However, one always has
\begin{equation}
  \label{eq:2}
  \pi^{*}\lceil \Delta \rceil \geq \lceil \pi^{*} \Delta \rceil
\end{equation}
simply because, in the notation used above, $\lceil a_{j} \rceil \cdot \ord_{C_{j}}(t) \geq \lceil a_{j} \cdot \ord_{C_{j}}(t) \rceil$ as the ramification index $\ord_{C_{j}}(t)$ is an integer.  In fact, one may easily conclude the stronger statement
\begin{equation}
  \label{eq:3}
 0 \leq  \ord_{C_{j}} (\pi^{*} \lceil \Delta \rceil - \lceil \pi^{*} \Delta \rceil ) \leq \ord_{C_{j}}(t) - 1 \, \, .
\end{equation}

\subsection{Reflexive sheaves}
\label{sec:reflexivesheaves}




To any divisor $D$ on a normal irreducible scheme $X$,
there is an associated coherent subsheaf $\O_X(D)$ of the constant sheaf $K$ of rational functions on $X$.
Recall that a coherent sheaf $\sM$ on $X$ is said to be {\it reflexive} if the natural map
 $\sM \rightarrow \sM^{\vee\vee}$  to its double dual   (with respect
 to $(\blank)^{\vee} =\sHom_{\O_X}(\blank, \O_X)$) is an isomorphism.
 The sheaf $\O_X(D)$  is reflexive for any Weil divisor $D$ and is invertible if and only if $D$ is a Cartier divisor.
If $\sM$ is a rank one reflexive and has a global section $s$, then the zero set of $s$ (counting
  multiplicities) determines a divisor $D \geq 0$ on $X$ and an isomorphism $\sM \cong \O_X(D)$ sending $s$ to the element $1$ of the function field $K$.  Furthermore, every divisor $D \geq 0$ with $\sM \cong \O_X(D)$ is determined by such a section.

We state here two useful properties of reflexive modules which shall be used throughout.

\begin{itemize}
\item[(1)]  If $\pi \: Y \to X$ is a finite surjective map of normal integral schemes and $\sM$ is a coherent sheaf on $Y$, then $\sM$ is reflexive (as an $\O_Y$-module) if and only if $\pi_* \sM$ is reflexive (as an $\O_{X}$-module).  See \cite[Exercise 16.7]{MatsumuraCommutativeRingTheory} and also \cite[Corollary 1.7]{HartshorneStableReflexiveSheaves1}.
\item[(2)]  Suppose $\sM$ is reflexive coherent sheaf on a normal integral scheme $X$, and  let $i \: U \rightarrow X$ be the inclusion of an open set whose compliment has codimension at least two.  Then $i_* (\sM|_U) \cong \sM$. More generally,  restriction to $U$ induces an equivalence of categories between reflexive coherent sheaves on $X$ and reflexive coherent sheaves on~$U$ \cite[Proposition 1.11, Theorem 1.12]{HartshorneGeneralizedDivisorsOnGorensteinSchemes}.
\end{itemize}

\subsection{Canonical modules and duality}
\label{sec:CanModDuality}

In this section, we recall the canonical module and a method of associating homomorphisms arising in the study of finite maps with divisors.  References include \cite{KunzKahlerDifferentials, MehtaRamanathanFrobeniusSplittingAndCohomologyVanishing,FedderFPureRat,MehtaSrinivasFPureSurface,HaraWatanabeFRegFPure,SchwedeFAdjunction}.

 When $X$ is a normal variety of finite type over a
field, $\omega_X$ is the unique reflexive sheaf agreeing with the sheaf $\wedge^{\dim X}\Omega_X$ on the smooth locus of $X$.
More generally, we can define canonical modules on any normal integral scheme with a dualizing complex $\omega_X^{\mydot}$
\cite{HartshorneResidues}.  If $X$ happens to be Cohen-Macaulay, $\omega_X^{\mydot}$ is concentrated in one degree and we call this the \emph{dualizing sheaf}; for arbitrary normal $X$, the \emph{canonical sheaf} $\omega_X$ is the unique reflexive sheaf agreeing with the dualizing sheaf on the Cohen-Macaulay locus (whose compliment has codimension $\geq 3$).
 A \emph {canonical divisor} is any divisor $K_X$ such that $\omega_X \cong \O_X(K_X)$.

In the remainder of this section, we review an oft used duality
argument which
associates divisors to certain homomorphisms of sheaves.  The
resulting correspondences will be essential tools in our investigations.
In the simplest case, when $f \: A \to B$ is a finite inclusion of Gorenstein rings (for example, fields) with $A \cong \omega_A$ and $B \cong f^! \omega_A = \omega_B$, well-known duality
arguments give
\begin{equation}
  \label{eq:1}
  \Hom_{A}(B,A) \simeq \Hom_{A}(B,\omega_{A}) \simeq \omega_{B} \simeq B.
\end{equation}
However, this identification is not unique; a generator of
$\Hom_{A}(B,A)$ as a $B$-module is only specified up to
(pre-)multiplication by a unit in $B$.  When $B$ is normal, one may
remove this ambiguity by using the formalism of divisors.  If $\phi \in \Hom_{A}(B,A)$ is non-zero and corresponds to $b_{\phi} \in B$, the divisor $\Div(b_{\phi})$ is independent of the choice of isomorphisms above.

More generally, focusing our attention on the geometric situation, suppose $\rho \: V \to U$ is a finite surjective morphism of
normal varieties over a field  $\Lambda$.  The subsequent arguments can be generalized to many schemes not of finite type over a field, as will be remarked and used throughout the discussion below.
Further suppose that $E$ is a divisor on $U$ and $\sL$ is a line
bundle on $V$.  Using duality for a finite map we obtain the following
isomorphism of reflexive sheaves.  Strictly speaking, one
  applies duality over the locus $U' \subseteq U$ where both $U'$ and
  $V' := \rho^{-1}(U') \subseteq V$ are smooth.  Since the respective complements
  of $U'$ and $V'$ have codimension two or higher, one may extend to
  $U$ and $V$ using the properties of
  reflexive sheaves recalled in the previous section.  In particular,
 at first glance,
  the reader is invited to imagine both $U$ and $V$ are
  smooth in the arguments that follow.
\begin{eqnarray*}
 \sHom_{\O_{U}}(\rho_{*} \sL, \O_{U}(E)) & \simeq & \sHom_{\O_{U}}(\rho_{*} \sL \tensor_{\O_{U}} \omega_{U}(-E), \omega_{U}) \\
& \simeq & \rho_{*} \sHom_{\O_{V}}(\sL \tensor_{\O_{V}} \rho^{*}\omega_{U}(-E), \omega_{V}) \\
& \simeq & \rho_{*} (\omega_{V} \tensor_{\O_{V}} \rho^{*}
\omega_{U}^{-1}(E)  \tensor_{\O_{V}} \sL^{-1})\\
& \simeq & \rho_{*} (\sL^{-1}(K_{V} - \rho^* K_{U} + \rho^* E)) \, \, .
\end{eqnarray*}

Suppose now that $0 \neq \phi \in
\Hom_{\O_{U}}(\rho_{*} \sL, K(U))$ for some line bundle $\sL$.  There
exists some $E \in \mathrm{Div}(U)$ with $\phi \in \Hom_{\O_{U}}(\rho_{*} \sL, \O_U(E))$ thus corresponding to an element
$$s_{\phi, E} \in H^{0}(V, \sL^{-1}(K_{V} - \rho^* K_{U} + \rho^* E)).$$  The
divisor $\Div(s_{\phi, E})$ is an effective divisor such that $\sL(\Div(s_{\phi,E})) \cong \O_{V}(K_{V} - \rho^* K_{U} + \rho^* E)$.
\begin{definition}
With the notation as above, we define the divisor $D_{\phi}$ to be $\Div(s_{\phi, E}) - \rho^* E$.
\end{definition}
It is straightforward to verify that $D_{\phi}$ is independent of the choice of $E$ (simply view $\phi$ as a rational section of $\sL^{-1}(K_{V} - \rho^* K_{U})$).
Therefore, the association $\phi \mapsto D_{\phi}$ is independent of
the choices made in the identifications above, and gives a correspondence
\begin{equation}
\label{eq:mapstodivisorsgeneral}
\left\{ \parbox{1.4in}{
\begin{center}
Line bundles $\sL$ on $V$ and non-zero $\O_{U}$-linear maps $\phi: \rho_{*}\sL \to K(U)$
\end{center}
} \right\}
\quad \xymatrix{ \ar@{~>}[r] & }\quad
\left\{ \parbox{1.7in}{
\begin{center}
Divisors $B$ on $V$ such that $\O_V(B) \cong \sL^{-1}(K_{V} - \rho^* K_{U})$.
\end{center}
} \right\}.
\end{equation}
wherein two
maps give rise to the same divisor if and only if they differ by
pre-multiplication by an element of $H^{0}(V, \O_{V}^{*})$.
More generally, when
working with schemes which are not of finite type over a
field, the same arguments apply if we have $\omega_{V} \simeq \sHom_{\O_U}(f_* \O_V, \omega_U) =: \rho^{!}\omega_{U}$ (which holds automatically in many cases).

\begin{remark}
\label{rem:TwistOfTraceTypeMaps}
Notice that given any element of $\Hom_{\O_U}(\rho_* K(V), K(U))$, we can restrict the source to obtain a map $\rho_* \O_{V} \to K(U)$.
Of course, we can restrict to other line bundles as well.  Given any
two Cartier divisors $L_1$ and $L_2$ on $V$, we obtain maps by
restriction; $\phi_i : \rho_* \O_V(L_i) \to K(U)$ for $i = 1, 2$.
Furthermore, we have the relation $D_{\phi_1} + L_1 = D_{\phi_2} +
L_{2}$.

\end{remark}
We will typically be assuming that $\sL = \O_{V}$ and $\phi(\rho_*
\O_{V}) \subseteq \O_U$.  In this case, one may take $E = 0$ so that
$D_{\phi}$ is effective, and
(\ref{eq:mapstodivisorsgeneral}) specializes to:
\begin{equation}
\label{eq:mapstodivisors}
\left\{ \parbox{1.2in}{
\begin{center}
Non-zero $\O_{U}$-linear maps $\phi: \rho_{*}\O_{V} \to \O_{U}$
\end{center}
} \right\}
\quad \xymatrix{ \ar@{~>}[r] & }\quad
\left\{ \parbox{1.6in}{
\begin{center}
Effective divisors on $U$ linearly equivalent to $K_{V} - \rho^{*}K_{U}$
\end{center}
} \right\}.
\end{equation}

\subsection{{$p^{-e}$}-linear maps}
\label{sec:pminuselinearmaps}

\begin{definition}
 A \emph{$p^{-e}$-linear map} between $R$-modules $M$ and $N$ is an additive map $\phi \: M \to N$ such that $\phi(r^{p^e} x) = r \phi(x)$ for all $x \in M$ and $r \in R$.  A $p^{-e}$-linear map may also be viewed as an $R$-linear map $F^e_* M \rightarrow N$.  We will therefore abuse terminology and refer to the elements of $\Hom_R(F^e_* M, N)$ as $p^{-e}$-linear maps.  Likewise, if $M$ is a submodule of the fraction field of $R$, we will also say that an element of $\Hom_R(M^{1/p^e}, N)$ is a $p^{-e}$-linear map.
\end{definition}

The purpose of this section is to apply the duality arguments above (with a slight modification) to associate a $\bQ$-divisor to each non-zero $p^{-e}$-linear map.  To that end,
suppose that $X$ is an $F$-finite normal scheme in characteristic $p > 0$ with a canonical module $\omega_X$ (and not necessarily essentially of finite type over a field).  Consider the following condition.
\begin{equation}  \label{eqn:FUpperShriekOmegaIsOmega} \tag{$!$}
F^! \omega_X = \Hom_{\O_X}(F_* \O_X, \omega_X) \simeq \omega_X.
\end{equation}
Condition (\ref{eqn:FUpperShriekOmegaIsOmega}) holds if $X$ is the spectrum of a local ring or essentially of finite type over an $F$-finite field (\ie a variety).
Furthermore, (\ref{eqn:FUpperShriekOmegaIsOmega}) always holds on sufficiently small affine charts (one always has $F^! \omega_X \simeq \omega_X \tensor \sN$ for some line bundle $\sN$ \cite[Chapter V, Theorem 3.1]{HartshorneResidues}).

\bigskip

\noindent
  \begin{tabular}[c]{ll}
    {\scshape Convention:} & \parbox[t]{4.5in}{All schemes and rings in characteristic $p > 0$ we consider in this paper will be assumed to satisfy condition (\ref{eqn:FUpperShriekOmegaIsOmega}).}
  \end{tabular}

\begin{remark}
 The assumption above is made essentially for convenience and readability.  It is possible, but rather messy, to keep track of the line bundle $\sN$ such that $F^! \omega_X \simeq \omega_X \tensor \sN$.
\end{remark}

If $X$ is a normal $F$-finite scheme satisfying condition (\ref{eqn:FUpperShriekOmegaIsOmega}), applying a variant of (\ref{eq:mapstodivisors}) yields the following bijective correspondence; see \cite{SchwedeFAdjunction}, and compare with \cite{HaraWatanabeFRegFPure,MehtaRamanathanFrobeniusSplittingAndCohomologyVanishing}.
\begin{equation}
\label{eq:divmapcorr}
\left\{ \parbox{1.7in}{
\begin{center}
Non-zero $\O_{X}$-linear maps $\phi: F^{e}_{*}\O_{X} \to \O_{X}$ up to pre-multiplication by an
element of $H^{0}(X, \O_{X}^{*})$
\end{center}
} \right\}
\quad \longleftrightarrow \quad
\left\{ \parbox{1.3in}{
\begin{center}
Effective $\Q$-divisors $\Delta$ on $X$ such that $(1-p^{e})(K_{X}+\Delta)$ is principal
\end{center}
} \right\}.
\end{equation}

Recall that a principal divisor is simply the divisor of a rational
function (it is necessarily an integral divisor).
Given $\phi \in \Hom_{\O_{X}}(F^{e}_{*}\O_{X}, \O_{X})$, one constructs the associated $\Q$-divisor $\Delta_{\phi}$
as follows:  for every prime Weil divisor $E$ on $X$ with $\O_{X,E}$ the stalk at the generic point of $E$, we
have
\[
\Hom_{\O_{X,E}}(F^{e}_{*}\O_{X,E},\O_{X,E}) =
\Hom_{\O_{X,E}}(\O_{X,E}^{1/p^e},\O_{X,E}) \simeq \O_{X,E}^{1/p^e}
\]
as in $\eqref{eq:1}$, since $\O_{X,E}$ is an $F$-finite discrete valuation ring
(DVR).  Suppose $\Phi_{E}$ generates
$\Hom_{\O_{X,E}}(\O_{X,E}^{1/p^e},\O_{X,E})$ as an
$\O_{X,E}^{1/p^e}$-module and write $\phi_{E}(\blank) = \Phi_{E}(
f^{1/p^e} \cdot \blank)$ where $f \in \O_{X,E}$ (\ie $\phi_{E}$ is the composition of $\Phi_{E}$ with multiplication by $f^{1/p^e}$).  The coefficient of $E$ in $\Delta_{\phi}$ is then
\begin{equation*}
  \ord_{E}(\Delta) := \frac{1}{p^e-1} \ord_{E}(f).
\end{equation*}
Rescaling by $\frac{1}{p^e-1}$ makes the above correspondence
\eqref{eq:divmapcorr} stable under iteration via Frobenius.  In other words, for all $n \geq 0$, the $\Q$-divisor $\Delta_{\phi^{n}}$ associated to
\begin{equation}
\label{eqn:ComposeMapWithSelf}
 \phi^{n} := \phi \circ F^{e}_{*} \phi \circ F^{2e}_{*} \circ \cdots \circ F^{(n-1)e}_{*} \phi \in \Hom_{\O_{X}}(F^{ne}_{*} \O_{X},\O_{X})
\end{equation}
is also $\Delta_{\phi}$.  See \cite{SchwedeFAdjunction} for further details.

One may generalize (\ref{eq:divmapcorr}) to include all
$\Q$-divisors $\Delta \geq 0$ on $X$ such that \mbox{$(p^{e}-1)(K_{X}+
\Delta)$} is simply Cartier, \ie such that $(1-p^{e})(K_{X}+\Delta)$
is integral and the associated reflexive sheaf $\sL =\O_{X}((1-p^{e})(K_{X}+\Delta))$ is invertible.  Explicitly, define an equivalence relation $\star$ on invertible
sheaves $\sL$ with a non-zero $\O_{X}$-linear map $\phi \: F^{e}_{*}\sL
\to \O_{X}$ as follows:  write
\begin{equation*}
  (\sL_{1}, \phi_{1} \: F^{e}_{*} \sL_{1} \to \O_{X})  \quad
  \star  \quad
  (\sL_{2}, \phi_{2} \: F^{e}_{*} \sL_{2} \to \O_{X})
\end{equation*}
if there is an $\O_X$-module isomorphism $\gamma \: \sL_{1} \to
\sL_{2}$ fitting in a commutative diagram
\[  \xymatrix{
F^{e}_{*}\sL_{1}    \ar_{\phi_{1}}[dr]    \ar^{F^{e}_{*}\gamma}[rr]    &   &  F^{e}_{*}\sL_{2}   \ar^{\phi_{2}}[dl]   \\
& \O_{X} & \\
}. \]
The bijection in \eqref{eq:divmapcorr} now takes the form below (see
\cite{SchwedeFAdjunction,BlickleSchwedeTakagiZhang}).

\begin{equation}
\label{eq:divmaplbcorr}
\left\{ \parbox{1.3in}{
\begin{center}
Invertible sheaves $\sL$ with a non-zero $\O_{X}$-linear map $\phi:
F^{e}_{*}\sL \to \O_{X}$ up to  $\star$-equivalence
\end{center}
} \right\}
\quad \longleftrightarrow \quad
\left\{ \parbox{1.8in}{
\begin{center}
Effective $\Q$-divisors $\Delta$ on $X$ such that
$(1-p^{e})(K_{X}+\Delta)$ is Cartier with
$\O_{X}((1-p^{e})(K_{X}+\Delta)) \simeq \sL$
\end{center}
} \right\}.
\end{equation}

\section{Extending finite maps over finite separable extensions}
\label{sec:LiftingProperties}


Suppose $R \subseteq S$ is a module-finite inclusion of $F$-finite
normal domains of characteristic $p>0$ and that the corresponding
extension of fraction fields $K := \Frac(R) \subseteq L := \Frac(S)$
is separable (the general case will be treated in subsequent sections).  Assume further that $R$ and $S$ satisfy (\ref{eqn:FUpperShriekOmegaIsOmega}); at the risk of needless repetition, we remark that property (\ref{eqn:FUpperShriekOmegaIsOmega}) is automatically satisfied when $R$ and $S$ are semi-local or are essentially of finite type over a perfect or even $F$-finite field.

We will give a criterion for when a $p^{-e}$-linear
map $\phi
\in \Hom_{R}(R^{1/p^e},R)$ on $R$ extends to a $p^{-e}$-linear
map \mbox{$\bar{\phi}
\in \Hom_{S}(S^{1/p^e}, S)$} on $S$, \ie such that the following diagram
commutes:
\begin{center}
\begin{minipage}[b]{\linewidth}
\[  \xymatrix{
S^{1/p^e} \ar^{\bar{\phi}}[r] & S
}\]

\vspace{-.5cm}
\begin{equation}
\label{eq:extndiag}
 \xymatrix{
 \rotatebox[origin=c]{90}{$\subseteq$} & \quad \rotatebox[origin=c]{90}{$\subseteq$}
 }\end{equation}

\vspace{-.5cm}
\[ \xymatrix{
R^{1/p^e} \ar_{\phi}[r] & R
}. \]
\end{minipage}
\end{center}
This criterion will be given in terms of the associated divisor
$\Delta_{\phi}$ on $\Spec R$ as in \eqref{eq:divmapcorr}.  
First, however, consider the following heuristic argument.

\begin{remark}
\label{rem:TooEasyArgument}
Let $X = \Spec R$ and $Y = \Spec S$ with induced map $\pi \: Y \to X$.
We have seen that $\phi \in \Hom_{R}(R^{1/p^e}, R)$ corresponds to a
section of $\O_X( (1-p^e)K_X)$.  Assuming $\pi$ is separable, we have $\pi^* ((1-p^e)K_X) = (1-p^e)(K_Y - \Ram_{\pi})$ where $\Ram_{\pi}$ is the ramification divisor of $\pi$ (see Definition~\ref{def:ramdiv}).  Therefore, after pulling back, $\phi$ induces a section of $\O_Y((1-p^e)(K_Y - \Ram_{\pi}))$ and hence also a map
\[
\bar{\phi} \in \Hom_{\O_Y}\left(\Frobp{\O_Y((1-p^e)\Ram_{\pi})}{e}, \O_Y\right).
\]
Thus we can view $\bar{\phi}$ as a rational section of $\Hom_S(F^e_*
S, S)$ and it is natural to hope that $\bar{\phi}$ extends $\phi$.
However, $\bar{\phi}$ as described is not canonically chosen;
unwinding the isomorphisms above gives a $p^{-e}$-linear map only up to multiplication by a unit of
$S$. As a result, it is difficult to see whether or not $\bar{\phi}$ extends $\phi$.
\end{remark}


In Theorem \ref{thm:mainliftingcriterion} we essentially show
how to fill the gap in
the line of argument of Remark~\ref{rem:TooEasyArgument} by using a canonical global section of
$\Hom_X(\pi_* \O_Y, \O_X) \cong \pi_* \O_Y(\Ram_{\pi})$, namely the
\emph{trace map}.  Furthermore, we will see more generally that fixing a homomorphism $\Tt \in
\Hom_X(\pi_* \O_Y, \O_X)$ allows one to handle similar issues for
arbitrary (\ie not necessarily separable) finite surjective morphisms
of normal varieties.

\begin{remark}
In the absence of wild ramification in codimension one, it is possible
to compute extensions rather directly (choosing bases appropriately)
and thereby
avoid the difficulties in Remark \ref{rem:TooEasyArgument} above.  Although more restrictive, we
have found this approach to be highly instructive.  This description will appear separately in \cite{SchwedeTuckerExplicitLiftingOfFSplittings}.
\end{remark}

\subsection{Generic and \etale extensions}
\label{sec:genericextensions}

\begin{lemma}
\label{lem:genericextension}
Suppose that $K \subseteq L$ is a finite separable field extension
  of $F$-finite fields with characteristic $p>0$.  If
\[
 e_{1}^{1/p^e}, \ldots, e_{n}^{1/p^e}
 \]
 form a basis for $K^{1/p^e}$ over $K$, then they also form a basis for $L^{1/p^e}$ over $L$. Furthermore, any
  map $\phi \in \Hom_{K}(K^{1/p^e},K)$ extends uniquely
  to a map $\bar{\phi} \in \Hom_{L}(L^{1/p^e}, L)$.
\end{lemma}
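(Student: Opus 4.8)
The plan is to reduce both assertions to the single algebraic fact that, because $L/K$ is separable, $L$ is linearly disjoint over $K$ from the purely inseparable extension $K^{1/p^{e}}$; equivalently, the natural multiplication map $L\otimes_{K}K^{1/p^{e}}\to L^{1/p^{e}}$ is an isomorphism. Granting this, both the basis statement and the unique-extension statement are formal consequences.

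To establish the key fact I would pick a primitive element $L=K(\alpha)$, with separable minimal polynomial $f\in K[x]$ of degree $m=[L:K]$. Taking $p^{e}$-th roots of $K$-polynomials in $\alpha$ (using that $(-)^{1/p^{e}}$ is additive in characteristic $p$) shows $L^{1/p^{e}}=K^{1/p^{e}}(\alpha^{1/p^{e}})$, and $\alpha^{1/p^{e}}$ is a root of $f^{1/p^{e}}\in K^{1/p^{e}}[x]$ (the polynomial whose coefficients are the $p^{e}$-th roots of those of $f$); since the $p^{e}$-th power isomorphism $K^{1/p^{e}}\cong K$ carries $f^{1/p^{e}}$ to $f$, the polynomial $f^{1/p^{e}}$ is irreducible and separable over $K^{1/p^{e}}$, so $[L^{1/p^{e}}:K^{1/p^{e}}]=m$. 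On the other hand $f$ stays irreducible over the purely inseparable extension $K^{1/p^{e}}$, and $\alpha=(\alpha^{1/p^{e}})^{p^{e}}$ lies in $K^{1/p^{e}}(\alpha^{1/p^{e}})$; together these force $K^{1/p^{e}}(\alpha)=K^{1/p^{e}}(\alpha^{1/p^{e}})=L^{1/p^{e}}=L\cdot K^{1/p^{e}}$, of degree $m$ over $K^{1/p^{e}}$. As $\dim_{K^{1/p^{e}}}(L\otimes_{K}K^{1/p^{e}})=[L:K]=m$ as well, the natural surjection $L\otimes_{K}K^{1/p^{e}}\to L\cdot K^{1/p^{e}}=L^{1/p^{e}}$ is an isomorphism, and in particular $[L^{1/p^{e}}:L]=[K^{1/p^{e}}:K]=n$. (More structurally, one can instead argue that $L/K$ separable makes $L\otimes_{K}K^{1/p^{e}}$ reduced, while $K^{1/p^{e}}/K$ purely inseparable makes its spectrum a single point, so $L\otimes_{K}K^{1/p^{e}}$ is a field.)

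Given the isomorphism $L\otimes_{K}K^{1/p^{e}}\cong L^{1/p^{e}}$, the basis statement is immediate: a $K$-basis $e_{1}^{1/p^{e}},\dots,e_{n}^{1/p^{e}}$ of $K^{1/p^{e}}$ becomes the $L$-basis $1\otimes e_{1}^{1/p^{e}},\dots,1\otimes e_{n}^{1/p^{e}}$ of $L\otimes_{K}K^{1/p^{e}}$, which the isomorphism identifies with $e_{1}^{1/p^{e}},\dots,e_{n}^{1/p^{e}}$ inside $L^{1/p^{e}}$. For the unique-extension statement, applying $\Hom_{L}(-,L)$, the tensor--Hom adjunction, and finiteness of $K^{1/p^{e}}$ over $K$ gives
\begin{align*}
\Hom_{L}(L^{1/p^{e}},L)&\cong\Hom_{L}(L\otimes_{K}K^{1/p^{e}},L)\cong\Hom_{K}(K^{1/p^{e}},L)\\
&\cong L\otimes_{K}\Hom_{K}(K^{1/p^{e}},K),
\end{align*}
under which restriction along $K^{1/p^{e}}\hookrightarrow L^{1/p^{e}}$ corresponds to $\phi\mapsto 1\otimes\phi$; hence every $\phi\in\Hom_{K}(K^{1/p^{e}},K)$ has a unique preimage $\bar\phi\in\Hom_{L}(L^{1/p^{e}},L)$. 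Concretely, $\bar\phi$ must be the $L$-linear map with $\bar\phi(e_{i}^{1/p^{e}})=\phi(e_{i}^{1/p^{e}})\in K$; expanding $k^{1/p^{e}}$ for $k\in K$ in the basis $\{e_{i}^{1/p^{e}}\}$ confirms $\bar\phi$ restricts to $\phi$ on $K^{1/p^{e}}$, and uniqueness is clear since an $L$-linear map on $L^{1/p^{e}}$ is determined by its values on a basis.

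The main obstacle is the middle step: the identification $L^{1/p^{e}}\cong L\otimes_{K}K^{1/p^{e}}$, \ie the linear disjointness of $L$ and $K^{1/p^{e}}$ over $K$ together with the degree equality $[L^{1/p^{e}}:L]=[K^{1/p^{e}}:K]$. This is exactly where separability of $L/K$ is used (it fails for inseparable extensions), and everything after it is routine bookkeeping with bases and adjunction.
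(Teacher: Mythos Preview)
Your proof is correct and follows essentially the same route as the paper: both reduce everything to the linear disjointness of $L$ and $K^{1/p^{e}}$ over $K$ (equivalently, $L^{1/p^{e}}\cong L\otimes_{K}K^{1/p^{e}}$), and then define $\bar\phi$ as $\phi\otimes_{K}\Id_{L}$ determined on the basis $e_{i}^{1/p^{e}}$. The only difference is that the paper cites a reference for the linear disjointness step, whereas you supply a direct primitive-element argument and spell out the Hom--tensor adjunction.
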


\begin{proof}
 Since $K^{1/p^e}$ is purely inseparable over $K$, it follows that
 $K^{1/p^e}$ and $L$ are linearly disjoint over $K$ (\eg Example 20.13
 in \cite{Mor96}).  Thus, $L^{1/p^e} = K^{1/p^e} \tensor_{K}
 L$, and the first statement is obvious.  For the second, note that
 the extension $\bar\phi = \phi \tensor_{K} \Id_{L}$ exists and is
 uniquely determined by the property that $\bar\phi(e_{i}^{1/p^e}) =
 \phi(e_{i}^{1/p^e})$ for all $1 \leq i \leq n$.
\end{proof}

\begin{example}
\label{ex:y=x2}
  Suppose $R = \F_{3}[y] \subseteq \F_{3}[x,y]/(y - x^{2}) \simeq
  \F_{3}[x] = S$.  Then $R^{1/3}$ is a free \mbox{$R$-module} with basis $1,
  y^{1/3}, y^{2/3}$, and similarly for $S^{1/3}$.  Thus, $\phi \in
  \Hom_{R}(R^{1/3},R)$ is uniquely determined by $\phi(1),
  \phi(y^{1/3}), \phi(y^{2/3})$, which can be arbitrary elements of
  $R$.  We identify $\phi$ with its generic extension $\phi \: K^{1/3}
  \to K$ to $K = \Frac(R)$, and denote by $\bar\phi$ the unique extension of $\phi$ to
  $L = \Frac(S)$ as in Lemma \ref{lem:genericextension}.  We have
  \begin{equation*}
    \bar\phi(1) = \phi(1) \qquad \bar\phi(x^{1/3}) = \frac{1}{x}
    \phi(y^{2/3}) \qquad \bar\phi(x^{2/3}) = \phi(y^{1/3})
  \end{equation*}
and it follows that an extension of $\phi$ from $R$ to $S$ as in
\eqref{eq:extndiag} exists if and only if $\phi(y^{2/3})$ is divisible
by $x$ in $S$.  Furthermore, this condition holds if
and only if $y$ divides $\phi(y^{2/3})$ in $R$. Note that this can be
checked after localizing at $\langle y \rangle$;  as we shall see, it is not a coincidence that $x$
defines the ramification locus in this example.
\end{example}

\begin{lemma}
\label{lem:etaleextens}
  Suppose $R \subseteq S$ is a module-finite \etale inclusion of domains in
  characteristic $p > 0$.  Then a map $\phi \in
  \Hom_{R}(R^{1/p^e}, R)$ extends uniquely to a map
  $\bar\phi \in \Hom_{S}(S^{1/p^e},S)$.
\end{lemma}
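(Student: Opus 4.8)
The plan is to reduce to the generic situation of Lemma~\ref{lem:genericextension} and then to check, at each codimension-one point, that the generic extension $\bar\phi \in \Hom_L(L^{1/p^e}, L)$ actually lands in $S$. First I would observe that since $R \subseteq S$ is module-finite and \'etale, it is in particular separable on fraction fields, so Lemma~\ref{lem:genericextension} applies: the map $\phi$, regarded via its generic extension $\phi \: K^{1/p^e} \to K$, extends uniquely to a map $\bar\phi \: L^{1/p^e} \to L$. Uniqueness at the level of an honest $S$-linear map $S^{1/p^e} \to S$ is then automatic, since two such maps agreeing generically agree everywhere (both $S^{1/p^e}$ and $S$ embed in $L^{1/p^e}$); so the whole content is \emph{existence}, i.e.\ that $\bar\phi(S^{1/p^e}) \subseteq S$.

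For existence, the key step is to use that \'etale maps are unramified, so that the ramification divisor $\Ram_\pi$ is zero — equivalently, for every prime divisor $E$ on $X = \Spec R$ and every prime divisor $C$ on $Y = \Spec S$ lying over it, the ramification index $\ord_C(t) = 1$, and moreover $\O_{X,E} \to \O_{Y,C}$ is a (finite \'etale, hence unramified) extension of DVRs. The second step is to run the codimension-one check: since $S$ is normal, $S = \bigcap_C \O_{Y,C}$ with the intersection over all height-one primes $C$, so it suffices to show $\bar\phi(S^{1/p^e}) \subseteq \O_{Y,C}$ for each such $C$. Localizing, we are reduced to the case of an unramified extension of DVRs $(\O_{X,E}, t) \subseteq (\O_{Y,C}, t)$ (the uniformizer is preserved), and we must show that if $\phi$ maps $\O_{X,E}^{1/p^e}$ into $\O_{X,E}$ then its generic extension maps $\O_{Y,C}^{1/p^e}$ into $\O_{Y,C}$. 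Here I would invoke the structure recalled in Section~\ref{sec:reflexivesheaves}/\ref{sec:CanModDuality}: the generator $\Phi_E$ of $\Hom_{\O_{X,E}}(\O_{X,E}^{1/p^e}, \O_{X,E})$ over $\O_{X,E}^{1/p^e}$ pulls back, under the unramified extension, to a generator $\Phi_C$ of $\Hom_{\O_{Y,C}}(\O_{Y,C}^{1/p^e}, \O_{Y,C})$ over $\O_{Y,C}^{1/p^e}$ — this is precisely the statement that $F^!$ commutes with \'etale base change, or concretely that the "different" of the unramified extension is trivial — and writing $\phi_E(\blank) = \Phi_E(f^{1/p^e}\cdot\blank)$ with $f \in \O_{X,E}$, one gets $\bar\phi_C(\blank) = \Phi_C(f^{1/p^e}\cdot\blank)$ with the \emph{same} $f$, now viewed in $\O_{Y,C}$; hence $\bar\phi_C$ indeed maps into $\O_{Y,C}$.

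The main obstacle I anticipate is the clean identification in the previous step — namely that the canonical generator $\Phi_E$ transported along an \'etale extension of DVRs remains a generator of the corresponding $\Hom$, with no twist by a unit. Concretely this is the compatibility $\omega_{\O_{Y,C}} \cong \O_{Y,C} \otimes_{\O_{X,E}} \omega_{\O_{X,E}}$ for the unramified (indeed \'etale) local map, i.e.\ $\pi^! = \pi^*$ on dualizing modules in the \'etale case; one then has to make sure that the coefficient "$f$" defining $\phi$ is literally unchanged, rather than changed by a unit, which follows because the identification is an isomorphism of $\O_{Y,C}^{1/p^e}$-modules sending $\Phi_E \mapsto \Phi_C$. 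Alternatively — and this may be the cleanest route to write up — one can bypass the divisor language entirely: cover $\Spec R$ by small enough affine opens over which $R^{1/p^e}$ is free (possible since $R$ is $F$-finite and regular in codimension one will not quite suffice, but one can work at each height-one prime where $\O_{X,E}^{1/p^e}$ is automatically free over the DVR $\O_{X,E}$), pick a free basis $e_1^{1/p^e},\dots,e_n^{1/p^e}$ of $\O_{X,E}^{1/p^e}$, note that by Lemma~\ref{lem:genericextension} and flatness of \'etale base change this is also a free basis of $\O_{Y,C}^{1/p^e}$ over $\O_{Y,C}$, and then $\bar\phi(e_i^{1/p^e}) = \phi(e_i^{1/p^e}) \in \O_{X,E} \subseteq \O_{Y,C}$ forces $\bar\phi(\O_{Y,C}^{1/p^e}) \subseteq \O_{Y,C}$ directly. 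Intersecting over all height-one $C$ gives $\bar\phi(S^{1/p^e}) \subseteq S$, completing the proof.
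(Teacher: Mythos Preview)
Your argument is correct provided $R$ and $S$ are normal, but it takes a genuinely different (and longer) route than the paper. The paper does not pass through the generic extension and then check in codimension one; instead it shows directly that the natural map $R^{1/p^e} \otimes_R S \to S^{1/p^e}$ is an isomorphism, after which $\bar\phi := \phi \otimes_R \id_S$ is the extension and uniqueness is immediate. To see the isomorphism, one localizes at a maximal ideal of $R$ so that $(R,\m,k)$ and $(R^{1/p^e},\m^{1/p^e},k^{1/p^e})$ are local; both $R^{1/p^e}\otimes_R S$ and $S^{1/p^e}$ are then finite flat, hence free, over $R^{1/p^e}$, and by Nakayama one reduces modulo $\m^{1/p^e}$ to the statement $k^{1/p^e}\otimes_k (S/\m S) \cong (S/\m S)^{1/p^e}$, which is Lemma~\ref{lem:genericextension} applied to the separable residue extensions.

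Your ``alternative route'' at the end---choosing a free basis of $R^{1/p^e}$ and observing it remains a basis of $S^{1/p^e}$ after \'etale base change---is exactly this isomorphism, but you are still performing it at height-one primes and then intersecting, which forces you to assume $R$ is normal (so that $\O_{X,E}$ is a DVR and $\O_{X,E}^{1/p^e}$ is free) and $S$ is normal (so that $S = \bigcap_C S_C$). The lemma as stated does not assume normality, so strictly speaking this is a gap. Localizing at \emph{maximal} ideals instead, as the paper does, avoids the issue entirely and gives the global isomorphism in one stroke; it also sidesteps the delicate point you flag about transporting the generator $\Phi_E$. Your codimension-one strategy is really the template for Lemma~\ref{lem:reductcodimoneaffine} and Theorem~\ref{thm:mainliftingcriterion}, where ramification is present and the tensor-product isomorphism genuinely fails---it is overkill here.
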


\begin{proof}
  Though this fact is well-known, we sketch a proof for
  completeness.  As in the proof of Lemma \ref{lem:genericextension},
  it suffices to show (following the argument given in
  \cite[Section 6.3]{HochsterHunekeTC1}) that the natural map
$R^{1/p^e}\tensor_{R} S \to S^{1/p^e}$
is an isomorphism.  This can be verified locally on $R$, and hence we
may assume $(R, \m, k)$ and thus also $(R^{1/p^e}, \m^{1/p^e}, k^{1/p^e})$ are local.
Both  $R^{1/p^e}\tensor_{R} S$ and  $S^{1/p^e}$ are flat and therefore free
$R^{1/p^e}$-modules, and by Nakayama's lemma we can check the
isomorphism after killing $\m^{1/p^e}$.  Since $S^{1/p^e}/(\m^{1/p^e}S^{1/p^e})$
is a product of finite separable field extensions of $k^{1/p^e}$, the
statement follows from Lemma \ref{lem:genericextension}.
\end{proof}

\subsection{Codimension one}
\label{sec:reduct-codim-one}

In this section, we use localization to reduce our extension question to
codimension one, \ie to the case of a discrete valuation ring (DVR).

\begin{lemma}
\label{lem:reductcodimoneaffine}
  Suppose $R \subseteq S$ is a generically separable module-finite inclusion
  of $F$-finite normal domains with characteristic $p>0$ and $\phi \in
  \Hom_{R}(R^{1/p^e},R)$.  Then $\phi$ extends to
  $\bar\phi \in \Hom_{S}(S^{1/p^e},S)$ if and only if an extension exists
  in codimension one.  In other words, for each height one prime $\bq$
  of $S$ lying over $\bp$ in $R$, we have a commutative diagram
\begin{center}
\begin{minipage}{\linewidth}
\[  \xymatrix{
(S_{\bq})^{1/p^e} \ar^-{\bar{\phi}}[r] & S_{\bq}
}\]

\vspace{-.5cm}
\begin{equation*}
 \xymatrix{
 \rotatebox[origin=c]{90}{$\subseteq$} & \quad \quad \rotatebox[origin=c]{90}{$\subseteq$}
 }\end{equation*}

\vspace{-.5cm}
\[ \xymatrix{
(R_{\bp})^{1/p^e} \ar_-{\phi}[r] & R_{\bp}
}. \]
\end{minipage}
\end{center}
\end{lemma}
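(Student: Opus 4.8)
The plan is to exploit the fact that an $R$-linear map $\phi \: R^{1/p^e} \to R$ already has a fixed generic extension $\phi \: L^{1/p^e} \to L$ over the fraction fields (by Lemma \ref{lem:genericextension}, since $K \subseteq L$ is separable), so the only issue is whether the image $\phi(S^{1/p^e})$ lands inside $S \subseteq L$. The forward direction is trivial: if $\bar\phi \in \Hom_S(S^{1/p^e}, S)$ restricts to $\phi$, then localizing at any height one prime $\bq$ of $S$ gives the required commutative square, since localization is exact and $S^{1/p^e} \to S$ localizes to $(S_\bq)^{1/p^e} \to S_\bq$. The substance is the converse, and for this the key observation is that $S$ is normal, hence $S = \bigcap_{\height \bq = 1} S_\bq$ inside $L$, the intersection running over all height one primes $\bq$ of $S$. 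This is exactly the algebraic form of property (2) of reflexive sheaves recalled in Section \ref{sec:reflexivesheaves}: $S$ (and more generally $S^{1/p^e}$, or any finitely generated reflexive $S$-module) is determined by its behavior in codimension one.

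So the argument proceeds as follows. First I would note that $\phi$ extends to $L^{1/p^e} \to L$ canonically by Lemma \ref{lem:genericextension}; call this extension $\phi_L$. Second, I would observe that $\phi$ extends to $\bar\phi \: S^{1/p^e} \to S$ if and only if $\phi_L(S^{1/p^e}) \subseteq S$, because $S^{1/p^e}$ is generated over $S$ by elements whose images under any putative $\bar\phi$ are forced to agree with $\phi_L$ — more precisely, $\bar\phi$ if it exists must be the restriction of $\phi_L$, so existence is equivalent to the containment $\phi_L(S^{1/p^e}) \subseteq S$. Third, using $S = \bigcap_{\height \bq = 1} S_\bq$, I would note $\phi_L(S^{1/p^e}) \subseteq S$ holds if and only if $\phi_L((S^{1/p^e})) \subseteq S_\bq$ for every height one $\bq$. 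Finally, for a fixed height one prime $\bq$ lying over $\bp := \bq \cap R$, we have $\phi_L$ restricting to $\phi_{R_\bp} \: (R_\bp)^{1/p^e} \to R_\bp$ (localizing the given $\phi$), and since $(S_\bq)^{1/p^e}$ is a localization of $S^{1/p^e}$, the condition $\phi_L((S_\bq)^{1/p^e}) \subseteq S_\bq$ is precisely the existence of the displayed commutative square at $\bq$. Patching these equivalences together gives the lemma.

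One small point that needs care: I should check that $(S_\bq)^{1/p^e}$ really is the localization $(S^{1/p^e})_{\bq^{1/p^e}}$ — equivalently that taking $p^e$-th roots commutes with this localization — but this is immediate since $S^{1/p^e} \cong F^e_* S$ as an $S$-module and localization at $\bq$ corresponds to localization of $F^e_* S$ at $\bq$, which is $F^e_*(S_\bq) \cong (S_\bq)^{1/p^e}$; the relevant multiplicative set $R \setminus \bp$ acts on $F^e_* S$ through its $p^e$-th powers, but inverting $t \in R \setminus \bp$ is the same as inverting $t^{p^e}$. A second point: I must confirm the localized map $\phi$ in the bottom row of the codimension-one diagram is the one obtained by localizing the global $\phi$ at $\bp$, and that its generic extension agrees with $\phi_L$ — both follow from uniqueness in Lemma \ref{lem:genericextension} applied over $\Frac(R_\bp) = K$. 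The main obstacle, such as it is, is simply organizing these compatibilities cleanly; there is no deep difficulty, as the whole lemma is a codimension-one-suffices statement powered by normality of $S$.
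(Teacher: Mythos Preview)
Your proposal is correct and follows essentially the same approach as the paper: extend $\phi$ uniquely to $\phi_L \: L^{1/p^e} \to L$ via Lemma~\ref{lem:genericextension}, note that an extension to $S$ exists if and only if $\phi_L(S^{1/p^e}) \subseteq S$, and then use normality of $S$ to write $S = \bigcap_{\height \bq = 1} S_\bq$ so that the containment can be checked at each height one prime. The paper's proof is a terse version of exactly this; your additional remarks on the compatibility of localization with $p^e$-th roots and with the generic extension are correct and simply make explicit what the paper leaves implicit.
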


\begin{proof}
Identify $\phi$ with its generic extension $\phi \: K^{1/p^e} \to   K$ to $K = \Frac(R)$, and denote by $\bar\phi$ the unique extension  of $\phi$ to $L = \Frac(S)$ as in Lemma \ref{lem:genericextension}.  Then, an extension of $\phi$ to $S$ exists if and only if  $\bar\phi(S^{1/p^e}) \subseteq S$.  Since $S$ is normal,  $S$ is the intersection of all of the subrings $S_{\bq} \subseteq L$  for each height one prime $\bq$ of $S$, and the conclusion follows at once.
\end{proof}

\begin{corollary}
\label{cor:lbcodim1}
 Suppose $\pi \: Y \to X$ is a finite separable morphism
of irreducible $F$-finite normal schemes
and $\phi \in \Hom_{\O_{X}}(F^{e}_{*}\sL, \O_{X})$ where $\sL$ is an
invertible $\O_{X}$-module. Then $\phi$ has
an extension $\bar\phi \in \Hom_{\O_{Y}}(F^{e}_{*}\pi^{*} \sL,
\O_{Y})$, \ie such that the diagram
\[  \xymatrix{
    \pi_{*} \, F^{e}_{*} \pi^{*} \sL \ar^-{\pi_{*}\bar\phi}[r] & \pi_{*}\O_{Y}
    \\
    F^{e}_{*}\sL \ar^{\rotatebox[origin=c]{90}{$\subseteq$}}_{\iota}[u] \ar_{\phi}[r] & \O_{X} \ar^{\pi^{\sharp}}_{\rotatebox[origin=c]{90}{$\subseteq$}}[u]
} .\]
commutes, if
and only if an extension exists in codimension one.  Here,
$\pi^{\sharp}$ is the natural map, and the
vertical map on the left is simply $\iota = F^{e}_{*}(\pi^{\sharp} \tensor
\id_{\sL})$ after the identifications
\[
\pi_{*}F^{e}_{*}\pi^{*} \sL = F^{e}_{*}\pi_{*}\pi^{*} \sL = F^{e}_{*}
(\pi_{*}\O_{Y} \tensor_{\O_{X}} \sL).
\]
\end{corollary}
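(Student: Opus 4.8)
The plan is to reduce this line-bundle-twisted, sheaf-theoretic statement to the affine situation already settled in Lemma~\ref{lem:reductcodimoneaffine}, by passing to affine charts on which $\sL$ trivializes, and then to reassemble the resulting local extensions into a single global one using the uniqueness of the generic extension supplied by Lemma~\ref{lem:genericextension}. One direction is immediate: a global $\bar\phi$ making the square commute restricts to an extension over $\Spec S_{\bq}$ for every height one prime $\bq$ of $S$. The content is the converse.

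For the converse I would cover $X$ by affine opens $U = \Spec R$ over which $\sL \cong \O_U$. Because $\pi$ is finite, $\pi^{-1}(U) = \Spec S$ is again affine, $\pi^{*}\sL$ trivializes over it, and $R \subseteq S$ is a module-finite inclusion of $F$-finite normal domains whose induced fraction field extension is $K(X) \subseteq K(Y)$, hence separable by hypothesis. Since $F^{e}$ is the identity on underlying topological spaces, $F^{e}_{*}$ commutes with restriction to opens and $\Gamma(U, F^{e}_{*}\O_{U}) \cong R^{1/p^{e}}$ as an $R$-module; so a choice of trivialization of $\sL|_{U}$ identifies $\phi|_{U}$ with an element of $\Hom_{R}(R^{1/p^{e}}, R)$ and its putative extension with an element of $\Hom_{S}(S^{1/p^{e}}, S)$, and the displayed square restricted over $U$ becomes precisely the square \eqref{eq:extndiag}. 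Lemma~\ref{lem:reductcodimoneaffine} then produces the local extension $\bar\phi_{U}$, exactly because the codimension one hypothesis holds over $U$.

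It remains to glue. By Lemma~\ref{lem:genericextension}, $\phi$ has a \emph{unique} generic extension $\bar\phi_{\eta} \in \Hom_{K(Y)}(K(Y)^{1/p^{e}}, K(Y))$, and each $\bar\phi_{U}$ is forced to coincide with the restriction of $\bar\phi_{\eta}$; hence the various $\bar\phi_{U}$ agree on overlaps and patch to a global $\bar\phi \in \Hom_{\O_{Y}}(F^{e}_{*}\pi^{*}\sL, \O_{Y})$. Commutativity of the big square is a local assertion, and it was checked chart by chart, so it holds. Alternatively, one can bypass the charts: $\sHom_{\O_{Y}}(F^{e}_{*}\pi^{*}\sL, \O_{Y})$ is reflexive, being the $\O_{Y}$-dual of a coherent sheaf on the normal integral scheme $Y$, so by property (2) of reflexive sheaves it is enough to define $\bar\phi$ away from a closed set of codimension $\geq 2$; viewing $\bar\phi_{\eta}$ as a rational section of this sheaf, its non-regularity locus is closed and — by regularity at the generic point together with the codimension one hypothesis — has codimension $\geq 2$, so $\bar\phi_{\eta}$ extends over $Y$.

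The one point requiring care is the translation in the second paragraph: verifying that trivializing $\sL$ and applying $F^{e}_{*}$ genuinely converts the twisted, sheaf-level commutative square into the untwisted, module-level square of Lemma~\ref{lem:reductcodimoneaffine}, and that "an extension exists in codimension one" matches up correctly on the two sides (on a codimension one local ring the twist by the invertible sheaf $\pi^{*}\sL$ is harmless). Once this dictionary is fixed, nothing further is needed beyond Lemmas~\ref{lem:genericextension} and \ref{lem:reductcodimoneaffine} and the standard properties of reflexive sheaves; the corollary is essentially a globalization of the affine lemma.
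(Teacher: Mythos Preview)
Your proof is correct and follows essentially the same approach as the paper, which simply writes ``These statements follow immediately from Lemmas~\ref{lem:reductcodimoneaffine} and~\ref{lem:etaleextens}.''  You have carefully unpacked what that one-liner means: trivialize $\sL$ on affine charts, invoke Lemma~\ref{lem:reductcodimoneaffine} there, and glue via the uniqueness of the generic extension.  The only cosmetic difference is that you appeal to Lemma~\ref{lem:genericextension} for the uniqueness needed to glue, whereas the paper cites Lemma~\ref{lem:etaleextens}; either works, since both guarantee that the local extensions are determined by (and hence agree with) the unique generic extension $\bar\phi_\eta$.
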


\begin{proof}
  These statements follow immediately from  Lemmas
  \ref{lem:reductcodimoneaffine}
  and
  \ref{lem:etaleextens}.
\end{proof}

\begin{example}
  Suppose $R = \Lambda[x^{3},x^{2}y,xy^{2},y^{3}] \subseteq
  \Lambda[x,y] = S$, where $\Lambda$ is any $F$-finite field with
  characteristic $p \not\in \{0,3\}$.  Since this extension is \etale in codimension
  one, every map $\phi \in \Hom_{R}(R^{1/p^e},R)$ extends
  to a map $\bar\phi \in \Hom_{S}(S^{1/p^e},S)$.
\end{example}

Thus, we see that the extension question for $p^{-e}$-linear
maps reduces to understanding what happens in the presence of
ramification in codimension one.
In order to highlight the situation for various kinds of
ramification, let us first recall what it means for an inclusion of DVR's to be tamely ramified.

\begin{definition}  \cite[Chapter 2]{GrothendieckMurreTheTameFundamentalGroup} \label{defn:tameramification}
 A local inclusion $(R, \bp) \subseteq (S, \bq)$ of DVR's is called \emph{tamely ramified} if:
\begin{itemize}
 \item[(1)]  It is generically finite and generically separable.
 \item[(2)]  The extension of residue fields $k(\bp) \subseteq k(\bq)$ is separable.
 \item[(3)]  A local parameter $r$ of $R_{\bp}$, when viewed as an element of $S_{\bq}$, has order of vanishing (with respect to the valuation of $S_{\bq}$) not divisible by $p$.
\end{itemize}
More generally, a generically separable module-finite inclusion of
normal domains $R \subseteq S$ will be called \emph{tamely ramified in
  codimension one} if all of the associated DVR extensions given by
localizing at height one primes are tamely ramified.  A generically
finite and separable inclusion of DVR's which is not tamely ramified is called \emph{wildly ramified}.
\end{definition}

In Example \ref{ex:y=x2}, we saw a tamely ramified extension and what
conditions were necessary for a $p^{-e}$-linear map to extend.  In
contrast,
we consider an example of a wildly ramified extensions.  
\begin{example}[Wild ramification via ramification index]
\label{ex:nottame1}
Suppose $ R = \mbox{$\F_{2}[[x^2(1+x^3)]]$} = \F_{2}[[t]]$ and $S =
\F_{2}[[x]]$ with the natural inclusion $R \subseteq S$. (This
example occurs geometrically if one completes $\F_2[x^2(1+x^3)] =\F_{2}[t]
\subseteq \F_2[x] $ at the origin $\langle t \rangle$ of the smaller
ring; the completion of $\F_2[x]$ along $\langle t \rangle$ splits
into a direct product of rings, with $S$ appearing as a factor.)


Consider the maps $\phi \in \Hom_{R}(R^{1/2},R)$ and its extension $\bar{\phi} \in \Hom_{S}(S^{1/2},S)$.
\begin{equation*}
  \begin{array}[t]{rcl}
    \phi \: R^{1/2} & \to & R \\
    1 & \mapsto & 0 \\
    t^{1/2} & \mapsto & t
  \end{array}
\qquad
  \begin{array}[t]{rcl}
    \bar\phi \: S^{1/2} & \to & S \\
    1 & \mapsto & 0 \\
    x^{1/2} & \mapsto & (1+x^3)
  \end{array}.
\end{equation*}
It is easy to verify that $\bar\phi$ extends $\phi$.  Indeed, we have $\bar{\phi}(1) = \phi(1) = 0$, and to verify the assertion that we must have $\bar{\phi}(x^{1/2}) = 1 + x^3$ simply divide both sides of the following by $x^2$,
\[
 t = x^2(1+x^3) = \bar{\phi}(x^{2/2}+x^{5/2}) = \bar{\phi}(x^{2/2}) + \bar{\phi}(x^{5/2}) = x \bar\phi(1) + x^2 \bar\phi(x^{1/2}) =  0 + x^2 \bar{\phi}(x^{1/2}).
\]

In fact, the map $\bar{\phi}$ is an $S^{1/2}$-module generator of $\Hom_S(S^{1/2}, S)$.
We will see in Section \ref{sec:tracemap} that $\psi \: R^{1/2} \rightarrow R$ extends to an $S$-linear map $\bar{\psi} \: S^{1/2} \to S$ if and only if $\Delta_{\psi} \geq \Delta_{\phi}$.  In other words, only the $R^{1/2}$-multiples of $\phi$ extend to $S$.
In this example, a non-surjective map $\phi \: R^{1/2} \to R$ extends to a surjective map $\bar{\phi} \: S^{1/2} \to S$.

\end{example}

\section{The trace map and $p^{-e}$-linear maps}
\label{sec:tracemap}


\subsection{Commutativity of trace with extensions of $p^{-e}$-linear maps}
\label{sec:commtrace}

\begin{proposition}
\label{prop:fieldtracecommute}
  Suppose that $K \subseteq L$ is a finite separable field extension
  of $F$-finite fields of characteristic $p>0$.  Let $\phi \in
  \Hom_{K}(K^{1/p^e},K)$, and denote by
  $\bar\phi \in \Hom_{L}(L^{1/p^e},L)$ its unique extension.
  If $\Tr_{L/K} \: L \to K$ is the trace map of $L$ over $K$, the
  following diagram commutes
\[  \xymatrix{
K^{1/p^e}       \ar^{\phi}[r]    &     K   \\
L^{1/p^e}    \ar_{\bar\phi}[r]     \ar^{(\Tr_{L/K})^{1/p^e}}[u]            &     L   \ar_{\Tr_{L/K}}[u]
} .\]
\end{proposition}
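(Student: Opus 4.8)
The plan is to reduce to a concrete linear-algebra computation over the separable extension $K \subseteq L$ by choosing a well-adapted basis of $L^{1/p^e}$ over $L$. First I would pick a basis $e_1^{1/p^e}, \ldots, e_n^{1/p^e}$ of $K^{1/p^e}$ over $K$, which by Lemma~\ref{lem:genericextension} is simultaneously a basis of $L^{1/p^e}$ over $L$; here I am using that $K^{1/p^e}$ and $L$ are linearly disjoint over $K$, so in fact $L^{1/p^e} = K^{1/p^e} \otimes_K L$, and $\bar\phi = \phi \otimes_K \mathrm{id}_L$. The key observation is that the Frobenius-twisted trace $(\Tr_{L/K})^{1/p^e} \: L^{1/p^e} \to K^{1/p^e}$ is exactly $\Tr_{L^{1/p^e}/K^{1/p^e}}$ under the identification $L^{1/p^e} = K^{1/p^e} \otimes_K L$: raising the defining equation $\Tr_{L/K}(\ell) = \sum_i c_i$ (sum of Galois conjugates, or trace of the multiplication map) to the $p^e$-th root commutes with the field operations. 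So the diagram to be checked becomes the statement that the extension of a $p^{-e}$-linear map commutes with the relative trace of the base change $K^{1/p^e} \to K^{1/p^e}\otimes_K L$.

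Concretely, I would verify commutativity on a spanning set of $L^{1/p^e}$ over $K$, namely elements of the form $e_i^{1/p^e} \cdot \ell$ with $\ell \in L$ (using $L^{1/p^e} = K^{1/p^e}\otimes_K L$). For such an element, $\bar\phi(e_i^{1/p^e}\cdot \ell) = \phi(e_i^{1/p^e}) \cdot \ell$ by $L$-linearity of $\bar\phi$ and the fact that $\phi(e_i^{1/p^e}) \in K$, so applying $\Tr_{L/K}$ afterward gives $\phi(e_i^{1/p^e}) \cdot \Tr_{L/K}(\ell)$. Going the other way: $(\Tr_{L/K})^{1/p^e}(e_i^{1/p^e}\cdot \ell)$; since $(\Tr_{L/K})^{1/p^e} = \Tr_{L^{1/p^e}/K^{1/p^e}}$ is $K^{1/p^e}$-linear and $e_i^{1/p^e} \in K^{1/p^e}$, this equals $e_i^{1/p^e}\cdot \Tr_{L^{1/p^e}/K^{1/p^e}}(\ell) = e_i^{1/p^e}\cdot (\Tr_{L/K}(\ell))^{1/p^e}$, which is an element of $K^{1/p^e}$ lying in the $K$-span of the basis. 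Applying $\phi$ and using that $\phi$ is $K$-linear and $\Tr_{L/K}(\ell)^{1/p^e}$ becomes a scalar after we recognize $(\Tr_{L/K}(\ell))^{1/p^e}$ as an element of $K^{1/p^e}$ — here I must be a little careful, since $\phi$ is $p^{-e}$-linear rather than $K$-linear. The right framing is: write $\Tr_{L/K}(\ell) = \tau \in K$, so $(\Tr_{L/K})^{1/p^e}(e_i^{1/p^e}\cdot \ell) = (e_i\tau)^{1/p^e}$, and then $\phi((e_i\tau)^{1/p^e}) = \tau\,\phi(e_i^{1/p^e})$ by $p^{-e}$-linearity. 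This matches $\phi(e_i^{1/p^e})\cdot\Tr_{L/K}(\ell)$ from the other route.

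The main obstacle — really the only nontrivial point — is justifying the identity $(\Tr_{L/K})^{1/p^e} = \Tr_{L^{1/p^e}/K^{1/p^e}}$ under the identification $L^{1/p^e} \cong K^{1/p^e}\otimes_K L$, and checking that this identification is compatible with the two trace maps in the obvious way. I would handle this by noting that trace for a finite separable (more generally, finite free) extension is the trace of the multiplication endomorphism, that this is stable under the base change $K \to K^{1/p^e}$ (the matrix of multiplication by $\ell$ in the basis $\{e_j\}$ has the same entries, now viewed in $K^{1/p^e}$, after applying $(-)^{1/p^e}$), and that the Frobenius root map is a ring isomorphism onto its image intertwining everything. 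Once that is in hand, the diagram chase above closes. I would also remark that it suffices to check commutativity on the $K$-spanning set $\{e_i^{1/p^e}\cdot \ell\}$ since all four maps in the square are additive and the outer ones are at least $K$-linear on the relevant pieces; alternatively, one can check it on the basis $e_i^{1/p^e}$ after extending scalars, which is cleanest.
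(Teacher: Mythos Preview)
Your overall strategy is correct and is essentially the paper's: identify $(\Tr_{L/K})^{1/p^e}$ with $\Tr_{L^{1/p^e}/K^{1/p^e}}$, write an arbitrary element of $L^{1/p^e}$ in terms of the tensor decomposition $L^{1/p^e}=K^{1/p^e}\otimes_K L$, and compute both ways around the square. However, your explicit computation contains two compensating errors that happen to yield the right final answer.

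The first error is the assertion $\Tr_{L^{1/p^e}/K^{1/p^e}}(\ell) = (\Tr_{L/K}(\ell))^{1/p^e}$ for $\ell \in L$. This is false: the identity $(\Tr_{L/K})^{1/p^e}=\Tr_{L^{1/p^e}/K^{1/p^e}}$ means $\Tr_{L^{1/p^e}/K^{1/p^e}}(\ell^{1/p^e})=(\Tr_{L/K}(\ell))^{1/p^e}$, which is a different statement. What you actually need (and what the paper proves as its key claim) is that the \emph{restriction} $\Tr_{L^{1/p^e}/K^{1/p^e}}|_{L}$ equals $\Tr_{L/K}$; this follows because a $K$-basis $f_1,\dots,f_m$ of $L$ is also a $K^{1/p^e}$-basis of $L^{1/p^e}$, so multiplication by $\ell\in L$ has the same matrix (with entries in $K$) in both settings. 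With this, $(\Tr_{L/K})^{1/p^e}(e_i^{1/p^e}\cdot\ell)=e_i^{1/p^e}\cdot\tau$ with $\tau=\Tr_{L/K}(\ell)\in K$, not $(e_i\tau)^{1/p^e}$.

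The second error then masks the first: you write $\phi((e_i\tau)^{1/p^e})=\tau\,\phi(e_i^{1/p^e})$ ``by $p^{-e}$-linearity,'' but $\phi\in\Hom_K(K^{1/p^e},K)$ is $K$-linear for the \emph{inclusion} action, i.e.\ $\phi(\tau\cdot x)=\tau\,\phi(x)$ for $\tau\in K$, and one cannot in general pull $\tau^{1/p^e}$ past $\phi$. (For instance, take $K=\bF_p(t)$, $\phi$ the projection onto the $t^{(p-1)/p}$-coefficient, $e_i=t$, $\tau=t^{p-2}$: then $\phi((e_i\tau)^{1/p})=1$ but $\tau\,\phi(e_i^{1/p})=0$.) Once the first step is corrected to $e_i^{1/p^e}\tau$ with $\tau\in K$, genuine $K$-linearity gives $\phi(e_i^{1/p^e}\tau)=\tau\,\phi(e_i^{1/p^e})$ and the square closes as desired.
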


\begin{proof}
  Note that $(\Tr_{L/K})^{1/p^e} = \Tr_{L^{1/p^e}/K^{1/p^e}}$ is
  simply the trace map of $L^{1/p^e}$ over $K^{1/p^e}$.  We claim that $\Tr_{L^{1/p^e}/K^{1/p^e}}\vert_{L} =
  \Tr_{L/K}$.  Indeed, choose a basis
  $f_{1}, \ldots, f_{m}$ for $L$ over $K$. Since $K^{1/p^e}$ and $L$ are
  linearly disjoint over $K$, these are also a basis for $L^{1/p^e}$ over
  $K^{1/p^e}$(\cf Lemma \ref{lem:genericextension}).  Thus, if $\lambda \in
  L$, we have identical matrix expressions (with respect to $f_{1}, \ldots, f_{m}$)
  for multiplication by $\lambda$ on both $L$ as a
  vector space over $K$ and also on $L^{1/p^e}$ as a vector space over
  $K^{1/p^e}$, and our claim follows.

Suppose $\mu^{1/p^e} \in L^{1/p^e}$, and write $\mu^{1/p^e} = \sum_{i=1}^{m}
e_{i}^{1/p^e}f_{i}$ where $e_{1}^{1/p^e}, \ldots, e_{m}^{1/p^e} \in
K^{1/p^e}$ and $f_1, \dots, f_m \in L$.  Explicit calculation gives
\begin{eqnarray*}
 \left( \phi \circ (\Tr_{L/K})^{1/p^e} \right)\left( \mu^{1/p^e} \right) &=& \phi\left(
    \Tr_{L^{1/p^e}/K^{1/p^e}}(\sum_{i=1}^{m}e_{i}^{1/p^e}f_{i})\right) \\
&=& \phi\left(
  \sum_{i=1}^{m}e_{i}^{1/p^e}\Tr_{L^{1/p^e}/K^{1/p^e}}(f_{i})\right) \\
&=& \sum_{i=1}^{m} \phi(e_{i}^{1/p^e}) \Tr_{L/K}(f_{i}).
\end{eqnarray*}
Similarly,
\begin{eqnarray*}
  \left( \, \Tr_{L/K} \circ \, \bar\phi^{\phantom{.}} \right) \left( \mu^{1/p^e}
  \right) &=& \Tr_{L/K} \left( \bar\phi(\sum_{i=1}^{m}e_{i}^{1/p^e}f_{i})
\right) \\
& =& \Tr_{L/K}\left(  \sum_{i=1}^{m}  \phi(e_{i}^{1/p^e})f_{i}\right)\\
& = & \sum_{i=1}^{m} \phi(e^{1/p^e}) \Tr_{L/K}(f_{i}),
\end{eqnarray*}
and the conclusion follows.
\end{proof}

\begin{corollary}
\label{cor:trcommutes}
   Suppose $\pi \: Y \to X$ is a finite separable morphism
of irreducible $F$-finite normal schemes
and $\sL$ is an
invertible sheaf on $X$.  If
$\phi \in \Hom_{\O_{X}}(F^{e}_{*}\sL, \O_{X})$
has
an extension $\bar\phi \in \Hom_{\O_{Y}}(F^{e}_{*}\pi^{*} \sL,
\O_{Y})$,  then the diagram
\[  \xymatrix{
F^{e}_{*}\sL       \ar^{\phi}[r]    &     \O_{X}   \\
\pi_{*}F^{e}_{*} \pi^{*} \sL    \ar_{\pi_{*}\bar\phi}[r]
\ar^{F^{e}_{*}(\Tr_{Y/X} \tensor \id_{\sL})}[u]            &     \pi_{*}\O_{Y}   \ar_{\Tr_{Y/X}}[u]
} .\]
commutes.  In particular, if $R \subseteq S$ is a generically separable module-finite
inclusion of $F$-finite normal domains and $\phi \in \Hom_{R}(R^{1/p^e},R)$
extends to $\bar\phi \in
\Hom_{S}(S^{1/p^e},S)$, then we have a commutative diagram
\begin{equation}
\label{eq:RScommTr}
\parbox[c]{1.5in}{$
  \xymatrix{
R^{1/p^e}       \ar^{\phi}[r]    &     R   \\
S^{1/p^e}    \ar_{\bar\phi}[r]     \ar^{(\Tr_{L/K})^{1/p^e}}[u]            &     S   \ar_{\Tr_{L/K}}[u]
} .$}\end{equation}
\end{corollary}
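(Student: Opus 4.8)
The plan is to deduce the global statement from the generic one (Proposition \ref{prop:fieldtracecommute}) by a reflexivity/codimension-one argument, exactly in the spirit of Lemma \ref{lem:reductcodimoneaffine}. First I would restrict attention to an affine chart $U = \Spec R \subseteq X$ small enough that $\sL$ is free, so that the corollary reduces to the second (affine) assertion; the two are equivalent once we untwist $\sL$ via the identifications $\pi_* F^e_* \pi^* \sL = F^e_*(\pi_* \O_Y \otimes_{\O_X} \sL)$ already recorded in Corollary \ref{cor:lbcodim1}, because $\Tr_{Y/X} \otimes \id_\sL$ and $\bar\phi$ are compatible with this untwisting. So it suffices to prove: if $R \subseteq S$ is generically separable module-finite of $F$-finite normal domains and $\phi \in \Hom_R(R^{1/p^e}, R)$ extends to $\bar\phi \in \Hom_S(S^{1/p^e}, S)$, then \eqref{eq:RScommTr} commutes.

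Next I would pass to the fraction fields. By Lemma \ref{lem:genericextension}, $\bar\phi$ is the unique extension of $\phi$ to $\Hom_L(L^{1/p^e}, L)$, where $K = \Frac R$, $L = \Frac S$. Proposition \ref{prop:fieldtracecommute} then gives precisely the commutativity of
\[
\xymatrix{
K^{1/p^e} \ar^{\phi}[r] & K \\
L^{1/p^e} \ar_{\bar\phi}[r] \ar^{(\Tr_{L/K})^{1/p^e}}[u] & L \ar_{\Tr_{L/K}}[u]
}
\]
as maps of $K$-vector spaces. The diagram \eqref{eq:RScommTr} is obtained from this one by restricting each object to the corresponding integral subring, so the only thing left to check is that every map in \eqref{eq:RScommTr} is the restriction of the corresponding map in the field diagram — that $\phi|_{R^{1/p^e}}$ lands in $R$ (given), that $\bar\phi|_{S^{1/p^e}}$ lands in $S$ (this is exactly the hypothesis that $\bar\phi$ extends $\phi$), that $\Tr_{L/K}(S) \subseteq R$ (standard: $S$ is module-finite over the normal domain $R$, so elements of $S$ are integral over $R$ and their traces lie in $R$), and that $(\Tr_{L/K})^{1/p^e}(S^{1/p^e}) \subseteq R^{1/p^e}$ (apply the $p^e$-th root to the previous inclusion, noting $(\Tr_{L/K})^{1/p^e} = \Tr_{L^{1/p^e}/K^{1/p^e}}$ as in the proof of Proposition \ref{prop:fieldtracecommute}). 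Once all four restrictions are verified, commutativity of the sub-square follows immediately from commutativity of the ambient square, since all the relevant inclusions are injective.

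The only genuinely delicate point — and hence the main obstacle — is the bookkeeping in the first reduction: one must make sure that twisting by the line bundle $\sL$ does not disturb the compatibility, i.e., that the natural map $F^e_*(\Tr_{Y/X} \otimes \id_\sL)$ really does restrict on a trivializing chart to $(\Tr_{L/K})^{1/p^e}$ under the identification of $F^e_* \pi^* \sL|_U$ with $F^e_* S$. This is a routine but slightly fussy naturality check — essentially the observation that $\Tr_{Y/X}$ is $\O_X$-linear, so tensoring it with an invertible sheaf and then localizing to where that sheaf is trivial recovers $\Tr$ unchanged. I expect no real difficulty beyond this, since everything else is either cited (Proposition \ref{prop:fieldtracecommute}, Lemma \ref{lem:genericextension}) or is the standard fact that trace maps integral elements to the base ring.
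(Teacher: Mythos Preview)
Your proposal is correct and follows essentially the same approach as the paper: embed all the sheaves in question into the function fields $K(X)$ and $K(Y)$ and invoke Proposition~\ref{prop:fieldtracecommute}. The paper compresses your entire argument into a single sentence, leaving implicit the routine checks (that the integral maps are restrictions of the generic ones, and that the twisting by $\sL$ is harmless after localizing) which you have spelled out carefully; your mention of a ``reflexivity/codimension-one argument'' is not actually needed here, since commutativity is an equation between maps that can be tested at the generic point directly.
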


\begin{proof}
  By considering the relevant sheaves as subsheaves of the function
  fields $K(Y)$ and $K(X)$ of $Y$ and $X$, respectively, the statement
  follows from Proposition \ref{prop:fieldtracecommute}.
\end{proof}



\begin{example}
\label{ex:nocommute}
The commutativity in \eqref{eq:RScommTr} does not hold in
  general for arbitrary maps (or even splittings) in
    $\Hom_{R}(S,R)$, as we now demonstrate.
In the situation of Example \ref{ex:y=x2} ($R = k[x^2] = k[y] \subseteq k[x] = S$), we have that $S$ is a free $R$-module
with basis $1,x$ and can define $\tau
\in \Hom_{R}(S,R)$ by $\tau(1) = 1$ and $\tau(x) = y^{2}$.   Even
though $\tau$ splits the inclusion of $R$ in $S$, it does not satisfy
the commutativity property above.  In fact,
consider the
$p^{-e}$-linear map $\phi$ and extension $\bar\phi$ given by
\begin{equation*}
  \begin{array}[t]{rcl}
    \phi \: R^{1/3} & \to & R \\
    1 & \mapsto & 1 \\
    y^{1/3} & \mapsto & 1 \\
    y^{2/3} & \mapsto & y
  \end{array}
\qquad
  \begin{array}[t]{rcl}
    \bar\phi \: S^{1/3} & \to & S \\
    1 & \mapsto & 1 \\
    x^{1/3} & \mapsto & x \\
    x^{2/3} & \mapsto & 1
  \end{array}.
\end{equation*}
Then $\phi \, \circ \, \tau^{1/3} \neq \tau \, \circ \, \bar\phi$, since
$
  \phi\left(\tau^{1/3}(x^{1/3})\right) = \phi(y^{2/3}) = y
$ but $
  \tau\left( \bar\phi( x^{1/3}) \right) = \tau(x) = y^{2}$.
\end{example}

We next observe that the trace map is essentially the only map fitting into
the commutative
diagrams \eqref{eq:RScommTr} above.

\begin{proposition}
\label{prop.TraceIsOnlyMap}
 Let $R \subseteq S$ be a module-finite generically separable inclusion of
 $F$-finite normal domains with $K = \Frac R$ and $L = \Frac S$. Suppose that $\bar \phi \in
 \Hom_S(S^{1/p^e}, S)$ extends a non-zero map $\phi \in \Hom_R(R^{1/p^e},
 R)$, and also that $\psi \in \Hom_R(S, R)$ is such that the diagram
\[
\xymatrix{
R^{1/p^e}       \ar^{\phi}[r]    &     R   \\
S^{1/p^e}    \ar_{\bar\phi}[r]     \ar^{(\psi)^{1/p^e}}[u]            &     S   \ar_{\psi}[u]
} \]
commutes.
Then there exists an element $u \in \bF_{p^e} \cap L$ such that $\Tr_{L/K}(u \cdot \blank) = \psi(\blank)$.
\end{proposition}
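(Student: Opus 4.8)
The plan is to show that $\psi$ and $\Tr_{L/K}$ differ by premultiplication by a scalar in $\bF_{p^e}\cap L$. First I would work at the level of fraction fields: both $\psi$ and $\Tr_{L/K}$ lie in $\Hom_K(L,K)$, which is a one-dimensional $L$-vector space (duality for the finite separable field extension $K\subseteq L$, as in \eqref{eq:1} applied over fields, using that $\Tr_{L/K}\neq 0$ since the extension is separable). Hence there is a unique $u\in L$ with $\psi(\blank)=\Tr_{L/K}(u\cdot\blank)$, and the whole content of the proposition is to prove $u\in\bF_{p^e}$, i.e. $u^{p^e}=u$. The hypothesis to exploit is the commutativity of the given square together with the commutativity already known for the \emph{trace} square, namely Corollary~\ref{cor:trcommutes} applied to $\bar\phi$ extending $\phi$, which gives $\Tr_{L/K}\circ\bar\phi=\phi\circ(\Tr_{L/K})^{1/p^e}$.

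The key computation is to compare the two squares. From the hypothesis, $\psi\circ\bar\phi=\phi\circ\psi^{1/p^e}$, where $\psi^{1/p^e}\colon S^{1/p^e}\to R^{1/p^e}$ is the map obtained by taking $p^e$-th roots of $\psi$ (on fraction fields, $\psi^{1/p^e}(\la^{1/p^e})=\psi(\la)^{1/p^e}$). Now substitute $\psi(\blank)=\Tr_{L/K}(u\cdot\blank)$ and $\psi^{1/p^e}(\blank)=(\Tr_{L/K})^{1/p^e}(u^{1/p^e}\cdot\blank)=\Tr_{L^{1/p^e}/K^{1/p^e}}(u^{1/p^e}\cdot\blank)$ into this identity. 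Using that $\bar\phi$ is $p^{-e}$-linear (so $\bar\phi(u^{p^e}\cdot\blank)$-type manipulations pull scalars out appropriately) and the trace-commutativity from Corollary~\ref{cor:trcommutes}, one side becomes $\Tr_{L/K}(u\cdot\bar\phi(\blank))$ and, rewriting $u\cdot\bar\phi(\la^{1/p^e})=\bar\phi((u^{p^e})^{1/p^e}\cdot\la^{1/p^e})\cdot(\text{something})$ — more precisely, pushing the $u$ through $\bar\phi$ as $u=(u^{p^e})^{1/p^e}$ acting $p^{-e}$-linearly — one should arrive at an identity of the form $\Tr_{L/K}\big(\bar\phi(u^{p^e}\cdot\blank)\big)=\phi\big((\Tr_{L/K})^{1/p^e}(u\cdot\blank)\big)$. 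Comparing this with the plain trace-commutativity $\Tr_{L/K}(\bar\phi(\blank))=\phi((\Tr_{L/K})^{1/p^e}(\blank))$, and using that $\phi\neq 0$ (hence $(\Tr_{L/K})^{1/p^e}$ composed with $\phi$ is ``injective enough'' — really one uses that $\bar\phi$ and the composite are determined by their values and $\phi$ is nonzero on $K^{1/p^e}$), I would deduce that multiplication by $u^{p^e}$ on $L^{1/p^e}$ and multiplication by $u$ on $L^{1/p^e}$ induce the same map after applying $\Tr_{L/K}\circ\bar\phi$; nondegeneracy of the trace pairing then forces $u^{p^e}=u$.

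\textbf{Main obstacle.} The delicate point is the bookkeeping of where the $p^e$-th roots sit: $u\in L$ but inside $S^{1/p^e}$ the natural scalar to move through the $p^{-e}$-linear map $\bar\phi$ is $u^{1/p^e}$, and one must be careful that $\psi^{1/p^e}$ premultiplies by $u^{1/p^e}$ while $\psi$ premultiplies by $u$, so the two sides of the square naturally produce $u^{p^e}$ on one side and $u$ on the other. Making this precise — i.e. correctly tracking that $\bar\phi(u^{1/p^e}\cdot\blank)$ relates to $u$ times $\bar\phi$ only when $u$ happens to be a $p^e$-th power, and otherwise genuinely differs — is exactly what yields the equation $u^{p^e}=u$ rather than a tautology. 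The final deduction that $u^{p^e}=u$ and $u\in L$ together give $u\in\bF_{p^e}\cap L$ is then immediate (the solutions of $t^{p^e}=t$ in any field of characteristic $p$ form the subfield $\bF_{p^e}$), so with $\psi(\blank)=\Tr_{L/K}(u\cdot\blank)$ this is the asserted $u$.
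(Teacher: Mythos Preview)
Your approach is correct and essentially identical to the paper's: write $\psi=\Tr_{L/K}(u\cdot\blank)$ using that $\Hom_K(L,K)$ is one-dimensional over $L$, then compare the hypothesis square with the trace square from Corollary~\ref{cor:trcommutes} to force $u^{p^e}=u$. The only simplification is that $\bar\phi$ is genuinely $L$-linear (not merely $p^{-e}$-linear), so $u\cdot\bar\phi(\blank)=\bar\phi(u\cdot\blank)$ holds directly without your ``$u=(u^{p^e})^{1/p^e}$'' bookkeeping; this immediately gives $\phi\circ\Tr_{L/K}^{1/p^e}(u\cdot\blank)=\phi\circ\Tr_{L/K}^{1/p^e}(u^{1/p^e}\cdot\blank)$, and one-dimensionality of $\Hom_K(L^{1/p^e},K)$ over $L^{1/p^e}$ (together with $\phi\neq 0$) yields $u=u^{1/p^e}$.
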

\begin{proof}
 We have the following
commutative diagram (abusing notation slightly by identifying
$p^{-e}$-linear maps with their generic extensions)
\[
\xymatrix{
K^{1/p^e}       \ar^{\phi}[r]    &     K   \\
L^{1/p^e}    \ar_{\bar\phi}[r]     \ar^{(\psi)^{1/p^e}}[u]            &     L   \ar_{\psi}[u]
} .\]
Since $\Hom_K(L, K)$ is a one dimensional vector space over $L$,
we see that $\Tr_{L/K}(u \blank) = \psi(\blank)$ for some element $u \in L$ .
Thus, it follows that
\[
\Tr_{L/K}(\bar{\phi}(u \cdot \blank)) = \Tr_{L/K}(u \cdot \bar{\phi}(\blank)) = \psi (\bar{\phi}(\blank)) = \phi(\psi^{1/p^e}(\blank)) = \phi(\Tr_{L/K}^{1/p^e}(u^{1/p^e} \cdot \blank)) .
\]
But $\Tr_{L/K} \, \circ \, \bar{\phi} = \phi \, \circ \, \Tr_{L/K}$, hence
 $u = u^{1/p^e}$ or equivalently $u^{p^e} - u = 0$ (again as $\Hom_K(L, K)
 $ is one dimensional over $L$).
In particular, this means that $u$ is a member of the finite field $\bF_{p^e} \cap L$.
\end{proof}

\subsection{Trace and the ramification divisor}
\label{sec:traceramifdiv}

\begin{definition}
\label{def:ramdiv}
  Suppose $\pi \: Y \to X$ is a finite separable morphism of
  normal schemes.
  The relative sheaf of differentials $\Omega_{Y/X}$ is a torsion
  sheaf on $Y$, and recall that the \emph{ramification divisor}
  $\Ram_{\pi}$ is the effective divisor defined by the property that
\[
\ord_{E}(\Ram_{\pi}) = \length_{\O_{Y,E}}(\Omega_{Y/X,E})
\]
for all integral subschemes $E$ on $Y$ of codimension one.  Here as above,
$\O_{Y, E}$ and $\Omega_{Y/X, E}$ denote the stalks of the relevant sheaves at the generic point of $E$.
\end{definition}

\begin{remark}
\label{rmk:ramindexminusonelessthancoeffinramdiv}
  An easy computation shows that $\left( \length_{\O_{Y,E}}(\Omega_{Y/X,E}) + 1 \right)$ is at least as large as the ramification index of $\pi$ along $E$
\cite[Chapter IV, Proposition 2.2]{Hartshorne}.
\end{remark}

Our next goal is to show that the trace map corresponds to the
ramification divisor via (\ref{eq:mapstodivisors}).  This fact is
known to experts, but we are unaware of a reference in full generality.  We first need the following observation.

\begin{lemma}
\label{lem:CharacterizationOfGeneratingMap}
Let $R \subseteq S$ be a module-finite inclusion of domains with
corresponding extension of fraction fields $K \subseteq L$.
Suppose that $\Phi \in \Hom_{K}(L,K)$ satisfies $\Phi(S) \subseteq R$, and
that $\Phi|_{S} \: S \to R$ generates $\Hom_R(S, R)$ as an $S$-module.
If $x \in L$ is such that $\Phi(x S) \subseteq R$, then $x \in S$.
\end{lemma}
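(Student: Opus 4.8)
The statement says: if $\Phi\colon L\to K$ is $K$-linear with $\Phi(S)\subseteq R$ and $\Phi|_S$ generates $\Hom_R(S,R)$ as an $S$-module, then $\Phi(xS)\subseteq R$ forces $x\in S$. Since $S$ is normal (being a domain which we may as well assume normal in the intended application, and in any case the proof only needs that $S$ is integrally closed in $L$, or at worst we reduce to height-one primes), it suffices to check $x\in S_{\bq}$ for every height-one prime $\bq$ of $S$; so after localizing we may assume $(S,\bq)$ is a DVR with $R = R_{\bp}$ its contraction, also a DVR. The key observation is that the hypothesis $\Phi(xS)\subseteq R$ says precisely that the map $s\mapsto \Phi(xs)$ lies in $\Hom_R(S,R)$, hence by the generating hypothesis equals $(\Phi|_S)\circ(\text{mult by } s_0)$ for some $s_0\in S$; that is, $\Phi(xs) = \Phi(s_0 s)$ for all $s\in S$.

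First I would promote this to an identity over $L$: since $S$ spans $L$ over $K$ and $\Phi$ is $K$-linear, $\Phi(x\ell) = \Phi(s_0\ell)$ for all $\ell\in L$, i.e. $\Phi((x-s_0)\ell)=0$ for all $\ell\in L$. But $\Phi$ is a \emph{nonzero} $K$-linear functional on $L$ (it generates $\Hom_R(S,R)$ hence is nonzero, and $\Hom_K(L,K)$ is one-dimensional over $L$, so $\Phi$ is nondegenerate in the sense that $\Phi(\ell\,\cdot)=0$ implies $\ell=0$ — concretely, if $\Phi((x-s_0)L)=0$ and $x-s_0\neq 0$, then dividing by $x-s_0$ gives $\Phi(L)=0$, contradicting $\Phi\neq 0$). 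Therefore $x = s_0\in S$, which is the conclusion.

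So the argument is essentially: (1) reduce to the DVR case by normality of $S$ and property (2) of reflexive sheaves, so that "$\Hom_R(S,R)$ generates" makes sense locally and is preserved — actually one must be slightly careful that $\Phi|_{S_{\bq}}$ still generates $\Hom_{R_\bp}(S_\bq,R_\bp)$, which follows because generators of a rank-one reflexive (here free, over the DVR) module localize to generators; (2) interpret $\Phi(xS)\subseteq R$ as an element of $\Hom_R(S,R)$; (3) use the $S$-module generation to write $\Phi(x\,\cdot) = \Phi(s_0\,\cdot)$ on $S$; (4) extend the identity to $L$ by $K$-linearity; (5) conclude $x=s_0$ using nondegeneracy of $\Phi$ on $L$ (one-dimensionality of $\Hom_K(L,K)$ over $L$).

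The main obstacle is really just step (1): making sure the reduction to codimension one is legitimate, i.e. that $\Phi|_S$ generating $\Hom_R(S,R)$ globally is enough to conclude it generates $\Hom_{R_\bp}(S_\bq, R_\bp)$ after localizing at each height-one prime $\bq$ of $S$. Actually, one can sidestep this entirely: the cleanest route avoids localization altogether. Set $\psi\colon S\to R$, $\psi(s) := \Phi(xs)$; by hypothesis $\psi\in\Hom_R(S,R)$, so since $\Phi|_S$ generates there is $s_0\in S$ with $\psi = s_0\cdot(\Phi|_S)$, i.e. $\Phi(xs) = \Phi(s_0 s)$ for all $s\in S$. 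Now extend $K$-linearly and $S$-bilinearly: every element of $L$ is of the form $s/r$ with $s\in S$, $r\in R\setminus\{0\}$, and $\Phi(x\cdot s/r) = r^{-1}\Phi(xs) = r^{-1}\Phi(s_0 s) = \Phi(s_0\cdot s/r)$, so $\Phi((x-s_0)\ell) = 0$ for all $\ell\in L$. Finally, if $x\neq s_0$ we could divide: for any $\ell'\in L$ write $\ell' = (x-s_0)\ell$ with $\ell = \ell'/(x-s_0)\in L$, giving $\Phi(\ell')=0$ for all $\ell'$, contradicting that $\Phi\neq 0$. Hence $x = s_0\in S$.
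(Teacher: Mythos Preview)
Your final argument is correct and essentially identical to the paper's proof: define $\psi(\blank)=\Phi(x\cdot\blank)\in\Hom_R(S,R)$, write $\psi=\Phi(s_0\cdot\blank)$ for some $s_0\in S$ by the generating hypothesis, extend the identity $\Phi(xs)=\Phi(s_0 s)$ from $S$ to $L$ by $K$-linearity, and conclude $x=s_0$ since $\Hom_K(L,K)$ is one-dimensional over $L$. Your preliminary plan to reduce to codimension one is unnecessary (as you yourself observed), and the paper likewise proceeds directly without any localization.
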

\begin{proof}
The map $\phi(\blank) := \Phi(x \cdot \blank)$ can be viewed as an
element of $\Hom_R(S, R)$.  Therefore $\phi(a) = \Phi(s a)$ for some
$s$ in $S$ and all $a \in S$.  But then $\Phi(s l) = \Phi(x l)$ for
all $l \in L$, which implies that $s = x$ as $\Hom_K(L, K)$ is a
one dimensional vector space over $L$.
\end{proof}

\begin{proposition}
  \label{prop:traceram}
  If $\pi \: Y \to X$ is a finite separable morphism of
  normal irreducible schemes with $\pi^{!} \omega_{X} \simeq \omega_{Y}$, then the trace
  map $\Tr_{Y/X} \: \pi_{*} \O_{Y} \to \O_{X}$ corresponds via (\ref{eq:mapstodivisors}) to the ramification
  divisor $\Ram_{\pi}$.
\end{proposition}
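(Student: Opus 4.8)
The plan is to work in codimension one, reducing via the reflexive-sheaf equivalence of categories (Section \ref{sec:reflexivesheaves}, property (2)) to the case where $X = \Spec R$ and $Y = \Spec S$ with $(R,\bp) \subseteq (S,\bq)$ an inclusion of DVRs, since both $\Ram_\pi$ and the divisor associated to $\Tr_{Y/X}$ via \eqref{eq:mapstodivisors} are determined by their coefficients at height-one primes. Concretely, I must show two things at each height-one prime $E$ of $Y$ lying over a height-one prime of $X$: first, that $\Tr_{Y/X}$, viewed through the isomorphism $\Hom_{\O_X}(\pi_*\O_Y, \O_X) \simeq \pi_*\O_Y(K_Y - \pi^* K_X)$ of \eqref{eq:mapstodivisors}, is nonzero (so that it genuinely corresponds to an effective divisor); and second, that the order of vanishing of the corresponding rational section along $E$ equals $\length_{\O_{Y,E}}(\Omega_{Y/X,E}) = \ord_E(\Ram_\pi)$.

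The key computational input will be the classical description of the different of an extension of DVRs. Writing $B := \O_{Y,E}$ and $A := \O_{X, \pi(E)}$, the module $\Hom_A(B, A)$ is a rank-one free $B$-module, and by Lemma \ref{lem:CharacterizationOfGeneratingMap} applied with $\Phi$ a generator, the trace map $\Tr_{L/K}$ (restricted to $B$) can be compared against such a generator: one has $\Tr_{L/K}(\blank) = \Phi(d \cdot \blank)$ for some $d \in L$, and $\Div_E(d)$ is precisely the different exponent. The standard fact (e.g. \cite[Chapter IV, Proposition 2.2]{Hartshorne} in the local setting, or Serre's \emph{Local Fields}) is that this different exponent equals $\length_B(\Omega_{B/A})$; in fact since $\Omega_{Y/X}$ is exactly the module whose annihilator is the different ideal, $\length_{\O_{Y,E}}(\Omega_{Y/X,E})$ \emph{is} the $E$-coefficient of the divisor cut out by the different, which by the duality identification is exactly the $E$-coefficient of $D_{\Tr_{Y/X}}$. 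I would also need to note that $\Tr_{L/K} \neq 0$, which holds precisely because the extension is separable, so the construction of \eqref{eq:mapstodivisors} applies to $\Tr_{Y/X}$ and the resulting divisor is effective (the different ideal is a genuine, possibly trivial, ideal of $B$).

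The main obstacle will be matching conventions cleanly across the two identifications: the divisor $D_\phi$ attached to $\phi$ via \eqref{eq:mapstodivisors} is built from a section $s_{\phi, E}$ of $\sL^{-1}(K_V - \rho^* K_U + \rho^* E)$ (here with $\sL = \O_X$, $E = 0$), and I must verify that tracing $\Tr_{Y/X}$ through the chain of duality isomorphisms $\Hom_{\O_X}(\pi_*\O_Y, \O_X) \simeq \pi_*(\omega_Y \otimes \pi^*\omega_X^{-1})$ sends the trace to the canonical section whose vanishing divisor is the different — i.e. that the abstract duality pairing really does recover $\Tr$ and not some unit multiple whose divisor could differ. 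The cleanest way to pin this down is to recall that under the identification $\omega_Y \simeq \pi^!\omega_X = \sHom_{\O_X}(\pi_*\O_Y, \omega_X)$, the trace $\Tr_{Y/X} : \pi_*\O_Y \to \O_X$ is dual to the structure map $\O_X \to \omega_X \otimes \omega_X^{-1}$, equivalently it corresponds under $\Hom_{\O_X}(\pi_*\O_Y,\O_X) \simeq \Hom_{\O_X}(\pi_*\O_Y \otimes \omega_X^{-1}, \omega_X \otimes \omega_X^{-1}) \simeq \pi_*(\omega_Y \otimes \pi^*\omega_X^{-1})$ to the canonical inclusion of $\omega_Y \otimes \pi^*\omega_X^{-1}$ into $K(Y)$ given by the identity — and this canonical section is, by the definition of the different, the one whose divisor of zeros is $\Ram_\pi$. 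Once this identification is made at the level of the generic point and one DVR, the reflexive extension argument finishes the proof globally.
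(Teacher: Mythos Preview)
Your approach is essentially the same as the paper's: reduce to codimension one, write $\Tr = \Phi(s \cdot \blank)$ for a generator $\Phi$ of $\Hom_R(S,R)$, identify $\langle s \rangle$ with the different ideal via Lemma~\ref{lem:CharacterizationOfGeneratingMap}, and then equate the different exponent with $\length(\Omega_{S/R})$.

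Two small corrections. First, the different is not the annihilator of $\Omega_{B/A}$ in general but its zeroth Fitting ideal; these agree only when $\Omega_{B/A}$ is cyclic (e.g.\ in the monogenic case treated in Serre), and it is the Fitting ideal whose valuation equals the length. The paper invokes \cite[Proposition 10.17]{KunzKahlerDifferentials} for exactly this point. Second, your final paragraph worrying about unit ambiguity is unnecessary: the divisor $D_\phi$ of \eqref{eq:mapstodivisors} is by construction independent of the choice of generator, so once you have $\Tr = \Phi(s \cdot \blank)$ the divisor $\Div(s)$ is well-defined regardless of which $\Phi$ you picked, and there is nothing further to pin down.
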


\begin{proof}
  The assertion can be verified by checking in codimension one on $Y$,
  and thus we may assume $X = \Spec R$ for $R$ a DVR, $Y = \Spec S$
  for $S$ a Dedekind domain, and $R \subseteq S$ a module-finite
  extension.
Also, if $K = \Frac(R)$ and $\Frac(S) = L$, we have that  $K \subseteq
L$ is a finite separable field extension and $\Tr_{Y/X} = \Tr_{L/K}$.
Since $S$ is a semi-local Dedekind domain, it is
  a principal ideal domain (PID).  In particular, it
  follows from Section \ref{sec:CanModDuality} that $\Hom_{R}(S,R)
  \simeq S$.  Choose a generator $\Phi$ for $\Hom_{R}(S,R)$ as an
  $S$-module, and write $\Tr_{L/K} = \Phi(s \cdot \blank)$ for some $s
  \in S$.  Thus, the divisor corresponding to $\Tr_{Y/X}$ via
  (\ref{eq:mapstodivisors}) is $\Div(s)$.

 We first show that the ideal $\langle s \rangle \subseteq S$
 is precisely the different ideal $\sD_{S/R}$ (see \cite[Chapter III]{SerreLocalFields}).  Indeed, identify $\Phi$ with its natural extension to
 $\Hom_{K}(L,K) \simeq L$.
For $x \in L$, it follows that $\Phi(xS) \subseteq R$ if and only if
$x \in S$ by Lemma \ref{lem:CharacterizationOfGeneratingMap}.  Thus, for $y \in L$, we have $\Tr_{L/K}(yS) \subseteq R$
if and only if $ys \in S$, or equivalently $y \in \frac{1}{s}S$.  As this is the
defining property of the inverse different, we conclude $\frac{1}{s}S =
\sD_{S/R}^{-1}$ or $\sD_{S/R} = \langle s \rangle$ as desired.

By the structure theorem for modules over a PID,
we have that $\Omega_{S/R} = \bigoplus_{i=1}^{m} S/a_{i}S$ for some $a_{1},
\ldots, a_{m} \in S\setminus\{0\}$.
Furthermore, by \cite[Proposition 10.17]{KunzKahlerDifferentials}, we know $ \sD_{S/R} =
\Fitt^{0}(\Omega_{S/R})$ and thus $\langle s \rangle = \langle a_{1}
\cdots a_{m} \rangle$.  Hence, for any height one (\ie non-zero)
prime ideal $\bq$ of $S$ corresponding to a prime divisor $E$ on $Y$, we have
\smallskip
\begin{equation}
\label{eqn:DifferentDifferents}
\begin{array}{c@{\,\,=\,\,}c@{\,\,=\,\,}c}
\length_{S_{\bq}}(\Omega_{S/R,\bq}) & \sum_{i=1}^{m}
\length_{S_{\bq}}(S_{\bq}/a_{i}S_{\bq}) &\sum_{i=1}^{m} \ord_{S_{\bq}}(a_{i}) \\[8pt]
& \ord_{S_{\bq}}\left(\prod_{i=1}^{m}a_{i}\right)&\ord_{S_{\bq}}(s)
\end{array}
\end{equation}
\smallskip
where $\ord_{S_{\bq}}(\blank)$ is the valuation of the DVR $S_{\bq}$.  As
the left and right sides of (\ref{eqn:DifferentDifferents}) determine the order of $E$ in
$\Ram_{\pi}$ and $\Div(S)$, respectively, the statement follows.
\end{proof}

\begin{remark}
  Using the notation from the proof above, alternate demonstrations
  that $$\ord_{S_{\bq}}(\sD_{S/R,\bq}) = \length_{S_{\bq}}(\Omega_{S/R,\bq})$$
  can be found in \cite{deSmitTheDifferentAndDifferentials} or \cite{SchejaStorchUberSpurfunktionen}, and appeared as early as \cite{MoriyaTheorieDerDerivationen}.  This fact also follows from \cite[Proposition III.7.14]{SerreLocalFields}
  when $S$ is monogenic over $R$.
\end{remark}

\begin{remark} Let $R \subseteq S$ be as above.
  In the language of \cite{KunzKahlerDifferentials}, the different
  ideal $\sD_{S/R}$
  defined using the trace map is known as the Dedekind different, while
 the ideal
  $\Fitt^{0}(\Omega_{S/R})$ is referred to as the K{\"a}hler
  different.  The crucial statement needed above is that
  these agree in the traditional settings of algebraic number theory.
\end{remark}

\section{Extending $p^{-e}$-linear maps over arbitrary inclusions}

Given a module-finite inclusion of domains $R \subseteq S$ which is not
generically separable, one may still ask whether an $R$-linear map
$\phi : F^e_* R \to R$ extends to an $S$-linear map $\bar{\phi} :
F^e_* S \to S$.  However, consider the following.

\begin{example}
\label{ex:pureinsep}
Suppose that $K \subseteq L$ is an extension of fields in characteristic $p > 0$ such that $L$ contains an element which is purely inseparable over $L$ (for example, if $K \subseteq L$ itself is purely inseparable).  We will show that $\phi \in \Hom_{K}(K^{1/p^{e}},K)$ extends to $\bar{\phi} \in \Hom_{L}(L^{1/p^{e}},L)$ if and only if $\phi$ is the zero map.

Choose $0 \neq x \in K$ such that $x^{1/p} \in L \setminus K$.  Thus we also have $y = x^{p^{e-1}} \in K$ with $y^{1/p^e} = x^{1/p} \in L \setminus K$.
Suppose that $\phi$ is not the zero map and pick any $z \in K$ such that $\phi(z^{1/p^e}) = u \neq 0$.  Assume, by way of contradiction, that $\phi$ extends to an $L$-linear map $\bar{\phi} \: L^{1/p^e} \to L$.  Then, since $y^{1/p^{e}}z^{1/p^{e}} \in K^{1/p^{e}}$, we have
\[
y^{1/p^e} u = y^{1/p^e} \phi(z^{1/p^e}) = y^{1/p^{e}} \bar{\phi}(z^{1/p^{e}}) =  \bar{\phi}(y^{1/p^e} z^{1/p^e}) = \phi(y^{1/p^{e}}z^{1/p^{e}}) \in K.
\]
But this implies that $y^{1/p^e} \in K$, which is a contradiction.
\end{example}

In fact, the line of argument given in the example above implies the following.

\begin{proposition}
\label{prop:nonsepnoextend}
Let $R \subseteq S$ be a module-finite inclusion of normal $F$-finite domains and suppose that the associated inclusion of fraction fields $K \subseteq L$ is not separable.  Then the only map $\phi \in \Hom_R(R^{1/p^e}, R)$ that extends to an element of $\Hom_S(S^{1/p^e}, S)$ is the zero map.
\end{proposition}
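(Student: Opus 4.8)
The plan is to reduce the global statement to the field-theoretic computation already carried out in Example~\ref{ex:pureinsep}, since all the real content is there. First I would pass to the fraction fields: by Lemma~\ref{lem:reductcodimoneaffine} (or more elementarily, because $S$ is normal and hence the intersection of its localizations at height one primes inside $L$), a map $\phi \in \Hom_R(R^{1/p^e},R)$ extends to $\bar\phi \in \Hom_S(S^{1/p^e},S)$ only if its unique generic extension $\phi \: K^{1/p^e} \to K$ restricts to a map $L^{1/p^e} \to L$. So it suffices to show that if $K \subseteq L$ is inseparable (and $F$-finite), then the only $\phi \in \Hom_K(K^{1/p^e},K)$ whose action on $K^{1/p^e}$ sends $L^{1/p^e}$ into $L$ is the zero map.

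Next I would produce the required purely inseparable element. Since $K \subseteq L$ is a finite inseparable extension, $L$ contains an element $\theta$ that is not separable over $K$; after a standard argument (writing the minimal polynomial in $t^p$ and taking a suitable power) one finds $0 \neq x \in K$ with $x^{1/p} \in L \setminus K$. Setting $y = x^{p^{e-1}} \in K$ gives $y^{1/p^e} = x^{1/p} \in L \setminus K$, exactly the hypothesis of Example~\ref{ex:pureinsep}. Then the computation of that example goes through verbatim: if $\phi$ were nonzero, pick $z \in K$ with $u := \phi(z^{1/p^e}) \neq 0$; assuming the generic extension $\bar\phi$ maps $L^{1/p^e}$ into $L$, the identity
\[
y^{1/p^e} u = y^{1/p^e}\phi(z^{1/p^e}) = \bar\phi(y^{1/p^e} z^{1/p^e}) = \phi(y^{1/p^e}z^{1/p^e}) \in K
\]
(using $y^{1/p^e} z^{1/p^e} \in K^{1/p^e}$ and $L$-linearity of $\bar\phi$ on the left) forces $y^{1/p^e} \in K$, a contradiction. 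Hence $\phi = 0$, and therefore no nonzero $\phi \in \Hom_R(R^{1/p^e},R)$ extends to $S$.

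The one point requiring a little care — and the only potential obstacle — is the reduction from the ring-level statement to the field-level statement: one must be sure that an extension $\bar\phi \in \Hom_S(S^{1/p^e},S)$ really does force the generic extension to carry $L^{1/p^e}$ into $L$. This is immediate because $S \subseteq L$ and $S^{1/p^e} \subseteq L^{1/p^e}$, and an $S$-linear map $S^{1/p^e}\to S$ extends uniquely to the $L$-linear map $L^{1/p^e} \to L$ obtained by tensoring with $L$ over $S$ (equivalently, by rational considerations, since $L^{1/p^e} = S^{1/p^e} \otimes_S L$ as $L$ is a localization of $S$ and localization commutes with taking $p^e$-th roots for domains). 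Everything else is the short explicit manipulation above, so the proof is essentially a citation of Example~\ref{ex:pureinsep} once the field-theoretic element $x$ has been extracted from the inseparability hypothesis.
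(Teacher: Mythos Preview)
Your reduction to the field case is fine, and the final contradiction via Example~\ref{ex:pureinsep} is correct once you have an element $x \in K$ with $x^{1/p} \in L \setminus K$. The gap is in producing such an $x$. Your ``standard argument'' takes an inseparable $\theta \in L$ with minimal polynomial $g(X^{p^r})$, $g$ separable, and passes to $\beta = \theta^{p^{r-1}}$; but then $\beta^p = \theta^{p^r}$ is merely \emph{separable} over $K$, not an element of $K$, unless $\deg g = 1$. In general there need not exist any $x \in K$ with $x^{1/p} \in L \setminus K$: for a concrete obstruction, take $p=3$, $K = \bF_3(s,t)$, and $L = K(\alpha)$ where $\alpha^9 + s\alpha^3 + t = 0$. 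Here $K' := K(\alpha^3)$ is a separable cubic over $K$, $L/K'$ is purely inseparable of degree $3$, and a direct computation using the basis $1,s,t,s^2,st,t^2,\ldots$ of $k(s,t)$ over $k(s^3,t^3)$ shows that any $\gamma \in L$ with $\gamma^3 \in K$ already lies in $K$. So $K^{1/p}\cap L = K$ even though $K\subseteq L$ is inseparable, and your argument cannot get started.

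The paper's proof supplies exactly the missing step: it first replaces $K$ by the separable closure $K'$ of $K$ in $L$. Since $K\subseteq K'$ is separable, Lemma~\ref{lem:genericextension} gives $K'^{1/p^e}=K'\cdot K^{1/p^e}$, and because $\bar\phi$ is $L$-linear (hence $K'$-linear) with $\bar\phi|_{K^{1/p^e}}=\phi$ landing in $K$, one gets $\bar\phi(K'^{1/p^e})\subseteq K'$. Now $K'\subsetneq L$ is purely inseparable, so the hypothesis of Example~\ref{ex:pureinsep} is genuinely met for the pair $K'\subseteq L$, and one concludes $\bar\phi|_{K'^{1/p^e}}=0$, hence $\phi=0$. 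Your argument becomes correct if you insert this passage to $K'$ before invoking Example~\ref{ex:pureinsep}.
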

\begin{proof}
We may immediately reduce to the case where $R = K$ and $S = L$ are fields.  Suppose that $\phi \in \Hom_K(K^{1/p^e}, K)$ extends to $\bar{\phi} \in \Hom_L(L^{1/p^e}, L)$.  We will reduce to the case where $L$ contains an element of $K \setminus K^p$, at which point we can apply Example \ref{ex:pureinsep}.  To that end, let $K'$ denote the separable closure of $K$ in $L$.  We will show that $\bar{\phi}(K'^{1/p^e}) \subseteq K'$ (which will complete the proof by allowing us to replace $K$ by $K'$).  Choose $x \in K'$.  Since we know that $K'^{1/p^e} = K^{1/p^e} K'$, by Lemma \ref{lem:genericextension}, we can write $x^{1/p^e} = f_1 g_1^{1/p^e} +  \dots + f_n g_n^{1/p^e}$ for $f_i \in K'$ and $g_i \in K$.  Then since $\bar{\phi}$ is $L$-linear, it is certainly $K'$-linear so that
\[
\bar{\phi}(x^{1/p^e}) = \bar{\phi}(f_1 g_1^{1/p^e} +  \dots + f_n g_n^{1/p^e}) = f_1 \bar{\phi}(g_1^{1/p^e}) + \dots + f_n \bar{\phi}(g_n^{1/p^e}) \in K'
\]
as desired.
\end{proof}

Therefore, in light of Proposition~\ref{prop:nonsepnoextend}, the
question of extending $p^{-e}$-linear maps is uninteresting for ring
inclusions which are not generically separable.  However,
Proposition~\ref{sec:commtrace} suggests a fruitful way to alter our
investigations in this setting.

\begin{definition}
Suppose that $\pi : Y \to X$ finite surjective morphism of irreducible
schemes and that $\Tt : \pi_* K(Y) \to K(X)$ is an $\O_X$-linear map.
For a line bundle $\sL$ on $X$, we say that an $\O_X$-linear map $\phi
: \sL^{1/p^e} \to \O_X$ \emph{has a transpose along $\Tt$} if there exists a map $\phi_{\Tt} : (\pi^* \sL)^{1/p^e} \to \O_Y$ such that the following diagram commutes.
\[
\xymatrix@C=60pt{
 \sL^{1/p^e} \tensor_{\O_X^{1/p^e}} K(X)^{1/p^e}       \ar^-{\phi \tensor_{\O_X} {K(X)} }[r]    &     K(X)  \\
 \pi_* (\pi^* \sL)^{1/p^e}    \ar_-{\phi_{\Tt}}[r]    \ar[u]^{\alpha}             &     \pi_* \O_Y   \ar_{\Tt}[u]
}
\]
Here the left vertical map, labeled $\alpha$ is constructed as
follows:  restrict the domain of $\Tt^{1/p^e} : \pi_* K(Y)^{1/p^e} \to
K(X)^{1/p^e}$ to $\pi_* \O_Y^{1/p^e}$ and then tensor with
$\sL^{1/p^e}$, noting that $\pi_* \O_Y^{1/p^e} \tensor_{\O_X^{1/p^e}}
\sL^{1/p^e} \cong \pi_* (\pi^* \sL)^{1/p^e}$.  The map $\phi_{\Tt}$ is
called the \emph{$\Tt$-transpose of $\phi$}.
\end{definition}

\begin{proposition}
\label{propGenericLifting}
Suppose we have a finite extension of fields $K \subseteq L$ and a
non-zero \mbox{$K$-linear} map $\Tt : L \to K$.  Then any $K$-linear map
$\phi : K^{1/p^e} \to K$ has a unique $\Tt$-transpose \mbox{$\phi_{\Tt} : L^{1/p^e} \to L$}.
\end{proposition}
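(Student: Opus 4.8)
The plan is to reduce the statement to a one-dimensionality fact about modules of $p^{-e}$-linear maps over the field $L^{1/p^e}$. First I would unwind the definition of ``$\Tt$-transpose'' in the present situation: here $X = \Spec K$ and $Y = \Spec L$ are points, the line bundle is trivial, so $\sL^{1/p^e} = K^{1/p^e}$ and $(\pi^*\sL)^{1/p^e} = L^{1/p^e}$, and the map $\alpha$ in the definition is simply the Frobenius twist $\Tt^{1/p^e} \: L^{1/p^e} \to K^{1/p^e}$, $y^{1/p^e} \mapsto \Tt(y)^{1/p^e}$. Thus a $\Tt$-transpose of $\phi$ is exactly an element $\phi_{\Tt} \in \Hom_L(L^{1/p^e}, L)$ with $\Tt \circ \phi_{\Tt} = \phi \circ \Tt^{1/p^e}$ as maps $L^{1/p^e} \to K$, and I must show such an element exists and is unique. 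Along the way I would verify the (routine) point that $\phi \circ \Tt^{1/p^e}$ actually lies in $\Hom_K(L^{1/p^e}, K)$ — it is the composite of the $K^{1/p^e}$-linear (hence $K$-linear) map $\Tt^{1/p^e}$ with the $K$-linear map $\phi$ — and that post-composition with $\Tt$ carries $\Hom_L(L^{1/p^e}, L)$ into $\Hom_K(L^{1/p^e}, K)$.

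The main input is then: for a finite field extension $M \subseteq M'$, the module $\Hom_M(M', M)$ — with $M'$ acting by $(a \cdot f)(x) = f(ax)$ — is one-dimensional over $M'$, since $\dim_M \Hom_M(M',M) = [M':M] = \dim_M M'$ forces $\dim_{M'} = 1$. Because $K$ (hence also $L$) is $F$-finite and $L/K$ is finite, $L^{1/p^e}$ is a finite extension of both $K$ and $L$; applying the above twice, $\Hom_L(L^{1/p^e}, L)$ and $\Hom_K(L^{1/p^e}, K)$ are each one-dimensional vector spaces over $L^{1/p^e}$. The map $T_{*} \: \Hom_L(L^{1/p^e}, L) \to \Hom_K(L^{1/p^e}, K)$, $\psi \mapsto \Tt \circ \psi$, is $L^{1/p^e}$-linear, and it is injective: if $\psi \ne 0$ then its image is a nonzero $L$-submodule of $L$, hence all of $L$, and $\Tt(L) = K \ne 0$ since $\Tt \ne 0$, so $\Tt \circ \psi \ne 0$. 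An injective $L^{1/p^e}$-linear map between one-dimensional $L^{1/p^e}$-spaces is an isomorphism, so I would set $\phi_{\Tt} := T_{*}^{-1}(\phi \circ \Tt^{1/p^e})$; this is the unique $\Tt$-transpose.

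There is no serious obstacle here; the proof is essentially a bookkeeping exercise, and the only points requiring care are (i) keeping the $K$-, $K^{1/p^e}$-, $L$-, and $L^{1/p^e}$-module structures on $L^{1/p^e}$ straight so that the various compositions land in the right $\Hom$-groups, and (ii) the fact that $\Hom_K(L^{1/p^e}, K)$ is one-dimensional over $L^{1/p^e}$ rather than merely of the expected dimension over $K$ — this is exactly where the $F$-finiteness hypothesis (so that $L^{1/p^e}/K$ is finite) is used. Finally, I would note that this specializes, via Proposition~\ref{prop:fieldtracecommute}, to the statement that in the separable case the $\Tr_{L/K}$-transpose of $\phi$ coincides with its unique extension $\bar{\phi}$ to $L$, so that the notion of transpose genuinely generalizes the extensions studied earlier.
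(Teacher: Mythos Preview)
Your proof is correct and follows essentially the same approach as the paper: both arguments hinge on the one-dimensionality of $\Hom_K(L^{1/p^e},K)$ as an $L^{1/p^e}$-vector space, and produce $\phi_{\Tt}$ as the unique preimage of $\phi \circ \Tt^{1/p^e}$ under post-composition with $\Tt$. Your version is slightly more explicit (you verify injectivity of $T_*$ and check both $\Hom$-spaces are one-dimensional rather than just the target), and your closing remark about the separable case is exactly the content of the Corollary that follows the Proposition in the paper.
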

\begin{proof}
Consider any non-zero map $\psi : L^{1/p^e} \to L$ and the (likely non-commutative) diagram
\[
\xymatrix{
K^{1/p^e}       \ar^{\phi}[r]    &     K   \\
L^{1/p^e}    \ar_{\psi}[r]     \ar^{(\Tt)^{1/p^e}}[u]            &     L   \ar_{\Tt}[u]
}
\]
We know $\Hom_K(L^{1/p^e}, K)$ is abstractly isomorphic to $L^{1/p^e}$ as an $L^{1/p^e}$-module.  Therefore, there exists an element $z \in L$ such that
\[
\Tt \circ \psi(z^{1/p^e} \cdot \blank) = \phi \circ \Tt^{1/p^e}(\blank).
\]
It follows that $\phi_{\Tt}(\blank) := \psi(z^{1/p^e} \cdot \blank)$ is the unique map making the diagram commute.
\end{proof}

\begin{corollary}
Suppose that $K \subseteq L$ is a finite separable extension of fields and that \mbox{$\Tr : L \to K$} is the trace map.  Fix a $K$-linear map $\phi : K^{1/p^e} \to K$.  Then the (unique) extension of $\phi$ to an $L$-linear map, $\overline{\phi} : L^{1/p^e} \to L$ is equal to the (unique) $\Tr$-transpose $\phi_{\Tr} : L^{1/p^e} \to L$.
\end{corollary}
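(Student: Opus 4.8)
The plan is to deduce the statement directly from Proposition~\ref{prop:fieldtracecommute} together with the uniqueness clause of Proposition~\ref{propGenericLifting}; in fact the corollary is essentially immediate once these two results are in hand. First I would recall that, by Lemma~\ref{lem:genericextension}, the map $\phi$ admits a unique $L$-linear extension $\overline{\phi} \colon L^{1/p^e} \to L$, so there is no existence issue to address. Likewise, Proposition~\ref{propGenericLifting} supplies the unique $\Tr$-transpose $\phi_{\Tr} \colon L^{1/p^e} \to L$, characterized as the unique $L$-linear map $\psi$ satisfying $\Tr \circ \psi = \phi \circ (\Tr)^{1/p^e}$. (Here one unwinds the definition of $\Tt$-transpose in the special case $X = \Spec K$, $Y = \Spec L$, $\sL = \O_X$: after the canonical identification $\pi_*\O_Y^{1/p^e} \otimes_{\O_X^{1/p^e}} \O_X^{1/p^e} \cong \pi_*\O_Y^{1/p^e}$, the structure map $\alpha$ becomes exactly $(\Tr)^{1/p^e} \colon L^{1/p^e} \to K^{1/p^e}$, so the defining square of a $\Tr$-transpose is literally the square appearing in Proposition~\ref{prop:fieldtracecommute}.)

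Next I would simply invoke Proposition~\ref{prop:fieldtracecommute}, which asserts precisely that the extension $\overline{\phi}$ satisfies $\Tr \circ \overline{\phi} = \phi \circ (\Tr)^{1/p^e}$. Thus $\overline{\phi}$ is an $L$-linear map $L^{1/p^e} \to L$ satisfying the commutativity condition that characterizes the $\Tr$-transpose of $\phi$.

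Finally, I would appeal to the uniqueness of the $\Tr$-transpose guaranteed by Proposition~\ref{propGenericLifting} to conclude $\overline{\phi} = \phi_{\Tr}$, which is the assertion of the corollary.

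The only step that requires any genuine care — and it is purely a matter of bookkeeping, not a substantive obstacle — is the identification in the first paragraph: one must check that the abstract diagram in the definition of ``$\Tt$-transpose,'' with its map $\alpha$ built from $\Tt^{1/p^e}$ tensored with $\sL^{1/p^e}$, specializes in the field case $\sL = \O_X$ to the concrete diagram of Proposition~\ref{prop:fieldtracecommute}. Once that matching is made explicit, the corollary follows with no further computation.
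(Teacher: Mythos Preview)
Your proof is correct and follows exactly the same route as the paper's own proof, which simply cites Lemma~\ref{lem:genericextension}, Proposition~\ref{prop:fieldtracecommute}, and Proposition~\ref{propGenericLifting} in a single line. You have merely (and usefully) unpacked what those citations mean.
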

\begin{proof}
This follows from Lemma \ref{lem:genericextension}, Proposition \ref{prop:fieldtracecommute} and Proposition \ref{propGenericLifting}.
\end{proof}

The existence of a $\Tt$-transpose can also be checked in codimension one, \cf Lemma \ref{lem:reductcodimoneaffine}.

\begin{proposition}
\label{prop:LiftingIffCodim1}
Suppose we have a finite surjective map $\pi : Y \to X$ of irreducible
schemes and a non-zero $\O_X$-linear map $\Tt : \pi_* K(Y) \to K(X)$.
Given an $\O_X$-linear map $\phi : \sL^{1/p^e} \to \O_X$, there exists
a transpose of $\phi$ along $\Tt$ if and only if for every codimension
one point $\eta \in X$, the induced map $\phi_{\eta} : \sL_{\eta}^{1/p^e} \to \O_{X, \eta}$ has a transpose along $\Tt$.
\end{proposition}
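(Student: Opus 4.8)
The plan is to follow the template of Lemma~\ref{lem:reductcodimoneaffine}: pass to the generic point, produce there a \emph{unique} $\Tt$-transpose via Proposition~\ref{propGenericLifting}, and then glue by checking containments at codimension one using normality of $X$ and $Y$ (which, as throughout this section, we assume; compare Lemma~\ref{lem:reductcodimoneaffine}). The forward implication is formal: if $\phi$ has a transpose $\phi_{\Tt} \: (\pi^*\sL)^{1/p^e} \to \O_Y$ along $\Tt$, then localizing its defining commutative square at a codimension one point $\eta \in X$ exhibits a transpose of $\phi_\eta$ along $\Tt$.

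For the converse, I would regard all the sheaves as subsheaves of the constant sheaves $K(Y)$ and $K(X)$. Tensoring $\phi$ with $K(X)$ and trivializing $\sL$ generically yields a $K(X)$-linear map $\phi_K \: K(X)^{1/p^e} \to K(X)$, and Proposition~\ref{propGenericLifting}, applied to the finite field extension $K(X) \subseteq K(Y)$ and the non-zero functional $\Tt$, supplies a \emph{unique} $\Tt$-transpose $\psi \: K(Y)^{1/p^e} \to K(Y)$. The goal becomes showing that $\psi$ restricts to the sought-after global transpose, i.e., that $\psi\big( (\pi^*\sL)^{1/p^e} \big) \subseteq \O_Y$ when $(\pi^*\sL)^{1/p^e}$ is viewed inside $K(Y)^{1/p^e}$; granting this, the defining square for $\phi_{\Tt} := \psi|_{(\pi^*\sL)^{1/p^e}}$ commutes automatically, since it does so after $- \tensor_{\O_X} K(X)$ (by construction of $\psi$) and all modules in sight are torsion-free.

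To verify the containment, I would use that $(\pi^*\sL)^{1/p^e}$ is coherent and $\O_Y = \bigcap_{\bq} \O_{Y, \bq}$ over the height one points $\bq$ of $Y$, so it suffices to check that $\psi$ lands in $\O_{Y,\bq}$ for each such $\bq$. Let $\eta \in X$ be the image of $\bq$; because $\pi$ is finite and $X$, $Y$ are normal, $\eta$ has codimension one (the contraction of a height one prime is a non-zero prime, and going-down forces height one). By hypothesis $\phi_\eta$ has a transpose $\phi_{\Tt,\eta}$ along $\Tt$, landing in $(\pi_*\O_Y)_\eta$; tensoring its defining square with $K(X)$ produces a $\Tt$-transpose of $\phi_K$, which by uniqueness equals $\psi$. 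Hence $\psi$ carries the localization of $(\pi^*\sL)^{1/p^e}$ at $\eta$ into $(\pi_*\O_Y)_\eta \subseteq \O_{Y,\bq}$, and ranging over all $\bq$ gives $\psi\big( (\pi^*\sL)^{1/p^e} \big) \subseteq \O_Y$.

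The step demanding the most care is the compatibility bookkeeping: precisely checking that ``having a transpose along $\Tt$'' is stable under localization and under passage to the generic point, so that the unique generic transpose $\psi$ really is the common restriction of all the codimension one transposes. This is not a conceptual obstacle, but it is the crux; once it is in place, the normality inputs (intersecting localizations at height one primes, going-down for contractions of height one primes) close the argument exactly as in Lemma~\ref{lem:reductcodimoneaffine}.
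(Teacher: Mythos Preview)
Your proposal is correct and follows essentially the same approach as the paper: produce the unique generic $\Tt$-transpose via Proposition~\ref{propGenericLifting}, then verify the containment $\phi_{\Tt}\big((\pi^*\sL)^{1/p^e}\big) \subseteq \O_Y$ at codimension one using normality. The only minor difference is that you check the containment at height one primes of $Y$ and then invoke going-down to relate these to codimension one points of $X$, whereas the paper works directly on $X$, using that $\pi_*\O_Y$ is a reflexive $\O_X$-module (since $Y$ is normal) so the containment in $\pi_*\O_Y$ can be checked at codimension one points of $X$; this sidesteps the going-down step but is otherwise the same argument.
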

\begin{proof}
We already know that the $\Tt$-transpose of $\phi$ exists generically
as $\phi_{\Tt}$, so one only has to check whether
$\phi_{\Tt}( \pi_{*}(\pi^* \sL)^{1/p^e}) \subseteq \pi_{*}\O_Y$.  This
in turn can
be checked at the stalks of codimension one points because $Y$ is normal.
\end{proof}

\subsection{Transpose criterion}
\label{sec:LiftingCriterionTheorem}

\begin{theorem}
\label{thm:mainliftingcriterion}
 Suppose $\pi \: Y \to X$ is a finite surjective morphism
of irreducible $F$-finite normal schemes
and $0 \neq \phi \in \Hom_{\O_{X}}(\sL^{1/p^e}, \O_{X})$ where $\sL$ is an
invertible $\O_{X}$-module.  Let $\Delta_{\phi}$ be the
$\Q$-divisor on $X$ associated to $\phi$ via
\eqref{eq:divmaplbcorr}.  Fix a non-zero $\O_{X}$-linear map $\Tt : \pi_* K(Y) \to K(X)$ and set $\RamiT$ to be the divisor corresponding to $\Tt|_{\pi_* \O_Y}$ via \eqref{eq:mapstodivisorsgeneral}.  Then:
\begin{itemize}
\item[(a)]  $\phi$ has a $\Tt$-transpose $\phi_{\Tt} \in
  \Hom_{\O_Y}(\pi^* \sL^{1/p^e}, \O_Y)$
if and only if $\pi^{*} \Delta_{\phi} \geq \RamiT$.
\item[(b)]  In the case of (a), the $\bQ$-divisor
  $\Delta_{\phi_{\Tt}}$ associated to $\phi_{\Tt}$ via
  \eqref{eq:divmaplbcorr} is equal to $\pi^{*} \Delta_{\phi} -
  \RamiT$. In particular, it follows that $K_{Y} +
  \Delta_{\phi_{\Tt}} \sim_{\Q} \pi^{*}(K_{X} + \Delta_{\phi})$.
\end{itemize}
Furthermore, if $\pi$ is generically separable one may take $\Tt = \Tr_{Y/X}$ so that $\RamiT = \Ram_{\pi}$.
\end{theorem}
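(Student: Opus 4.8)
The plan is to reduce everything to codimension one via Proposition \ref{prop:LiftingIffCodim1}, so that both the existence of a $\Tt$-transpose and the divisor-theoretic inequality $\pi^*\Delta_\phi \geq \RamiT$ can be checked prime-by-prime on a DVR extension. Fix a height one prime $\bq$ of $\O_Y$ lying over $\bp$ on $X$; after localizing we may assume $X = \Spec R$ with $R$ an $F$-finite DVR, $Y = \Spec S$ with $S$ a semilocal Dedekind (hence principal ideal) domain, and the fraction field extension $K \subseteq L$ finite separable (the generically separable hypothesis is used here; note that without separability the $\Tt$-transpose still exists by Proposition \ref{propGenericLifting}, but $\Tt$ need not be the trace). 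Twisting by $\sL$ changes nothing local, so we may take $\sL = \O_X$. Since $S$ is a PID, $\Hom_R(S,R) \simeq S$ by the discussion in Section \ref{sec:CanModDuality}, and likewise $\Hom_R(R^{1/p^e},R) \simeq R^{1/p^e}$ and $\Hom_S(S^{1/p^e},S) \simeq S^{1/p^e}$ are free of rank one over the indicated rings. Write $\phi = \Phi_R(\,f^{1/p^e}\cdot \blank\,)$ where $\Phi_R$ generates $\Hom_R(R^{1/p^e},R)$, so that $\ord_E(\Delta_\phi) = \frac{1}{p^e-1}\ord_R(f)$ in the notation of \eqref{eq:divmapcorr}; similarly fix a generator $\Phi_S$ of $\Hom_S(S^{1/p^e},S)$ and write $\Tt|_S = \Psi(s\cdot\blank)$ where $\Psi$ generates $\Hom_R(S,R)$, so $\RamiT$ has local coefficient $\ord_S(s)$ along $E$.

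Next I would pin down the generic $\Tt$-transpose $\phi_\Tt$ explicitly and compute its associated divisor. By Proposition \ref{propGenericLifting} there is a unique $z \in L$ with $\Tt\circ\psi(z^{1/p^e}\cdot\blank) = \phi\circ\Tt^{1/p^e}(\blank)$ for any chosen nonzero $\psi$; take $\psi = \Phi_S$ and then $\phi_\Tt = \Phi_S(z^{1/p^e}\cdot\blank)$, so that $\phi_\Tt \in \Hom_S(S^{1/p^e},S)$ — i.e. the transpose exists locally — exactly when $z \in S$, and in that case $\ord_E(\Delta_{\phi_\Tt}) = \frac{1}{p^e-1}\ord_S(z)$. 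The heart of the argument is therefore the identity
\[
\ord_S(z) = \ord_S\!\big(\pi^*((1-p^e)(K_X+\Delta_\phi))\big) - (1-p^e)\ord_S(s) \quad\text{up to the expected bookkeeping},
\]
equivalently the divisor identity $\Div(z) + (p^e-1)\RamiT = \pi^*\Div(f^{1/p^e}\text{-data}) = (p^e-1)\pi^*\Delta_\phi$ at the level of $E$-coefficients. To get this I would track $z$ through the commutativity relation: apply $\Tt$ to $\phi_\Tt(S^{1/p^e})$, use Corollary \ref{cor:trcommutes}/the defining commutative square to rewrite $\Tt\circ\Phi_S = (\text{a generator of }\Hom_R(R^{1/p^e},R))\circ(\Tt|_S)^{1/p^e}$ up to a unit of $S^{1/p^e}$, and compare the two rank-one free-module generators $\Phi_R$ and $\Tt\circ\Phi_S\circ(\text{unit})$; the ratio is a unit of $S^{1/p^e}$ by uniqueness of generators, and chasing the factors $f$, $s$, and the different exponent gives the valuation identity. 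Since $\pi^*\Delta_\phi - \RamiT$ is effective at $E$ if and only if $\ord_S(z) \geq 0$, parts (a) and (b) both drop out of this single local computation, and $K_Y + \Delta_{\phi_\Tt} \sim_\Q \pi^*(K_X+\Delta_\phi)$ follows since $\O_Y(\Div z)$ is the appropriate twist by construction of $\RamiT$ in \eqref{eq:mapstodivisorsgeneral}.

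Finally, for the last sentence, when $\pi$ is generically separable one may take $\Tt = \Tr_{Y/X}$: this is a legitimate nonzero $\O_X$-linear map $\pi_*K(Y)\to K(X)$ carrying $\pi_*\O_Y$ into $\O_X$ (the standard fact recalled in the introduction), and Proposition \ref{prop:traceram} identifies the divisor it corresponds to via \eqref{eq:mapstodivisors} — hence also via the $\sL=\O_X$ case of \eqref{eq:mapstodivisorsgeneral} — as precisely $\Ram_\pi$, so $\RamiT = \Ram_\pi$ as claimed. I expect the main obstacle to be the second paragraph: making the identification of the two free-module generators precise enough to extract the exact valuation relation between $\ord_S(z)$, $\ord_R(f)$, $\ord_S(s)$, and the ramification index — in particular keeping the rounding/denominator $p^e-1$ straight and verifying that the unit ambiguities in all the duality isomorphisms (which only matter up to $H^0(\O^*)$, cf. \eqref{eq:mapstodivisorsgeneral}) genuinely cancel. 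Everything else is either a direct appeal to a cited earlier result or a routine localization reduction.
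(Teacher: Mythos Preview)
Your plan is the same as the paper's: reduce to a DVR extension via Proposition~\ref{prop:LiftingIffCodim1}, write $\phi$, $\phi_\Tt$, and $\Tt$ as pre-multiples of fixed module generators, and read off the valuation identity from the defining relation $\Tt\circ\phi_\Tt = \phi\circ\Tt^{1/p^e}$. Two points need correcting.

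First, drop the separability assumption from the body of the argument. The theorem is stated for an arbitrary finite surjective $\pi$; separability enters only in the final sentence identifying $\Tr_{Y/X}$ with $\Ram_\pi$. Proposition~\ref{prop:LiftingIffCodim1} and Proposition~\ref{propGenericLifting} do not require it, and nothing in the DVR computation does either.

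Second, and this is the obstacle you correctly anticipate, your citation of Corollary~\ref{cor:trcommutes} is misplaced: that corollary concerns the trace map and extensions $\bar\phi$ in the separable case, and is logically downstream of the present theorem rather than an input to it. The fact you actually need is a transitivity statement for generators: if $\Phi$ generates $\Hom_R(S,R)$, $\Psi_S$ generates $\Hom_S(S^{1/p^e},S)$, and $\Psi_R$ generates $\Hom_R(R^{1/p^e},R)$, then both $\Phi\circ\Psi_S$ and $\Psi_R\circ\Phi^{1/p^e}$ are $S^{1/p^e}$-module generators of $\Hom_R(S^{1/p^e},R)$. The paper cites \cite[Appendix~F]{KunzKahlerDifferentials} and \cite[Lemma~3.9]{SchwedeFAdjunction} for this. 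Once you have it, writing $\Tt=\Phi(s\cdot\blank)$, $\phi=\Psi_R(x^{1/p^e}\cdot\blank)$, $\phi_\Tt=\Psi_S(z^{1/p^e}\cdot\blank)$ and unwinding $\Tt\circ\phi_\Tt = \phi\circ\Tt^{1/p^e}$ gives
\[
(\Phi\circ\Psi_S)(s\,z^{1/p^e}\cdot\blank) \;=\; (\Psi_R\circ\Phi^{1/p^e})\bigl((xs)^{1/p^e}\cdot\blank\bigr),
\]
hence $\Div_Y(s^{p^e}z)=\Div_Y(xs)$ and $\tfrac{1}{p^e-1}\Div_Y(z)=\pi^*\Delta_\phi-\RamiT$. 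No rounding appears, and the unit ambiguity disappears upon taking divisors, so both (a) and (b) follow immediately.
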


\begin{proof}
  From Proposition \ref{prop:LiftingIffCodim1}, we may assume that $X =
  \Spec R$ for $R$ a DVR, $Y = \Spec S$ for $S$ a Dedekind domain,
  $R \subseteq S$ a module-finite extension with $K =
  K(R) \subseteq K(S) = L$ the induced finite extension of fraction fields, and $\sL \cong \O_{X} = R$.
  If we identify $\phi \in \Hom_{R}(R^{1/p^e},R)$ with its generic
  extension $\phi \: K^{1/p^e} \to K$ and denote by $\phi_{\Tt} \: L^{1/p^e}
  \to L$ the generic transpose along $\Tt$, then we need to show
  $\phi_{\Tt}(S^{1/p^e}) \subseteq S$ if and only if $\pi^{*} \Delta_{\phi} \geq
  \RamiT$.

  Choose an $R^{1/p^e}$-module generator $\Psi_{R}$ for $\Hom_{R}(R^{1/p^e},R)
  \simeq R^{1/p^e}$
  and an $S^{1/p^e}$-module generator $\Psi_{S}$ for $\Hom_{S}(S^{1/p^e},S)
  \simeq S^{1/p^e}$.  We can write $\phi(\blank) = \Psi_{R}(x^{1/p^e} \cdot \blank)$ for
  $x \in R$ and $\phi_{\Tt}(\blank) = \Psi_{S}(y^{1/p^e} \cdot \blank)$ for $y \in
  L$.  It follows that $\phi_{\Tt}(S) \subseteq S$ if and only if $y \in
  S$, \ie $\Div_{Y}(y) \geq 0$.  Also choose a generator $\Phi$ for $\Hom_{R}(S,R)$ as an
  $S$-module, and note that we can identify $\Tt$ with $\Phi(s \cdot \blank)$ for some $s
  \in L$.  We know $\Div_{Y}(s) = \RamiT$.  Also observe that $\Phi^{1/p^e}$ generates
  $\Hom_{R^{1/p^e}}(S^{1/p^e},R^{1/p^e}) \simeq S^{1/p^e}$ as an
  $S^{1/p^e}$-module, and $\Tt^{1/p^e} =
  \Phi^{1/p^e}(s^{1/p^e}\cdot \blank)$.

Working at the level of fraction fields, we have
  \[
    \Tt \circ \, \, \phi_{\Tt} = \phi \circ \Tt^{1/p^e}
  \in \Hom_{K}(L^{1/p^e},K) \simeq L^{1/p^e}.
  \]
Thus
$\Phi( \Psi_S(s \cdot y^{1/p^e} \cdot \blank) = \Phi(s
\cdot\Psi_S(y^{1/p^e} \cdot \blank)$ agrees with $\Psi_R(x^{1/p^e}
\Phi^{1/p^e}(s^{1/p^e} \cdot \blank)) = \mbox{$\Psi_R( \Phi^{1/p^e}(x^{1/p^e}s^{1/p^e} \cdot
\blank))$}$.  Now both $\Phi \circ \Psi_{S}$ and $\Psi_R \circ \Phi^{1/p^e}$ are \mbox{$S^{1/p^e}$-module} generators for $\Hom_{R}(S^{1/p^e},R)$ by \cite[Appendix F]{KunzKahlerDifferentials} \cite[Lemma 3.9]{SchwedeFAdjunction}.
Therefore $\Div_{Y}(s^{p^e} y)  = \Div_{Y}(xs)$ so that
  \[
    \Delta_{\phi_{\Tt}} = \frac{1}{p^e-1} \Div_{Y}(y) =  \frac{1}{p^e-1} \Div_{Y}(x) -
    \Div_{Y}(s)
    = \pi^{*} \Delta_{\phi} - \RamiT \, \, .
  \]
  Thus, $\phi_{\Tt}(S^{1/p^e}) \subseteq S$ if and only if
  $\pi^{*} \Delta_{\phi} - \RamiT \geq 0$.
 The remaining statement follows from $\RamiT \sim K_Y - \pi^* K_X$.
  \end{proof}


\section{The behavior of the test ideal under finite morphisms}
\label{sec:testideals}

\subsection{Test ideals, $F$-regularity, and $F$-purity}
\label{sec:testidealsdefinitions}

Given a ring $R$ of characteristic $p > 0$, the test ideal $\tau(R)
\subseteq R$ measures the singularities of $R$.  Test ideals have been
a fundamental object of study in commutative algebra for approximately
25 years, and as mentioned in the introduction, are closely related to
multiplier ideals.  In this section we apply the techniques of Section
\ref{sec:LiftingCriterionTheorem} to describe the behavior of the test
ideal under finite integral ring extensions.

The definition of the test ideal has in fact been revisited many times
over, and we focus herein on a slight variant of that which was originally defined in
\cite{HochsterHunekeTC1}. For clarity, what we are interested in is
now often referred to as the \emph{big test ideal}\footnote{The big
  test ideal is sometimes known as the non-finitistic test ideal, in
  contrast to the classical or finitistic test ideal.} and is denoted
$\tau_{b}(R)$.  It is a geometric object whose formation is known to
commute with localization and completion (in contrast to the classical or
finitistic test ideal).  In the simple setting of an $F$-finite normal domain $R$, the big test ideal $\tau_b(R)$ is the smallest non-zero ideal $J \subseteq R$ such that $\phi(J^{1/p^e}) \subseteq J$ for all maps $\phi \in \Hom_R(R^{1/p^e}, R)$ and all $e \geq 0$.
Before giving a more general definition we need some additional language.

\begin{remark}The big test ideal and the finitistic test ideal
  have been shown to agree in many cases, for example the
  $\bQ$-Gorenstein case; see
  \cite{LyubeznikSmithCommutationOfTestIdealWithLocalization} and
  \cite{AberbachMacCrimmonSomeResultsOnTestElements}.  See
  Remark~\ref{rem:TwoIdealsAgree} for the precise statement in full
  generality.
\end{remark}

\begin{definition}
\label{defn:triple}
 A \emph{triple} $(X, \Delta, \ba^t)$ is the combined information of a normal integral $F$-finite scheme $X$ satisfying  (\ref{eqn:FUpperShriekOmegaIsOmega}), a $\bQ$-divisor $\Delta$ (usually assumed to be effective), a non-zero ideal sheaf $\ba$, and a nonnegative real number $t \geq 0$.  By a \emph{pair} $(X, \Delta)$ we simply mean a triple where $\ba = R$ and $t = 1$.
\end{definition}

Our presentation of the (big) test ideal is somewhat non-standard, but has the advantage that one need not develop a theory of tight closure in order to state the definition.

\begin{definition} \cite{HochsterHunekeTC1, HochsterFoundations, LyubeznikSmithCommutationOfTestIdealWithLocalization, HaraYoshidaGeneralizationOfTightClosure, TakagiInterpretationOfMultiplierIdeals, SchwedeCentersOfFPurity}
\label{DefnBigTestIdeal}
For an affine triple $(X = \Spec R, \Delta, \ba^{t})$ with $\Delta \geq 0$,
 the\emph{ big test ideal} $\tau_b(X; \Delta, \ba^t)$ is the unique smallest non-zero ideal $J$ of $R$ satisfying
\[
 \phi \left(\Frobp{\ba^{\lceil t(p^e - 1) \rceil} J}{e} \right) \subseteq J
\]
for all $e \geq 0$ and all $\phi \in \Hom_{R}(R^{1/p^{e}}, R)$ such
that $\Delta_{\phi} \geq \Delta$ via \eqref{eq:divmapcorr}, \ie   such that
\[
\phi \in \Image\Bigg(\Hom_R\left( \Frobp{R(\lceil (p^e - 1)\Delta \rceil)}{e}, R\right) \rightarrow \Hom_R(\Frob{R}{e}, R) \Bigg).
\]
\end{definition}

This ideal always exists in the above context, as will be reviewed
shortly in
Lemma \ref{Lemma:ExistOfTestElts}.  First, however, note that in many common
situations the above definition can be simplified substantially --
as seen below.

\begin{lemma} \label{lem:SimpleTestIdeal}\cite[Proposition 4.8]{SchwedeFAdjunction}
Suppose that $(X = \Spec R, \Delta, \ba^{t})$ is an 
affine triple with $\Delta \geq 0$, and further assume that $\Delta = \Delta_{\phi}$ for
some $\phi \in \Hom_R(\Frob{R}{e}, R)$ via (\ref{eq:divmapcorr}).  Set
$\phi^n$ to be the $n$-th Frobenius iterate of $\phi$ as in (\ref{eqn:ComposeMapWithSelf}).  Then  $\tau_b(X; \Delta, \ba^t)$ is the unique smallest ideal $J \neq 0$ of $R$ such that
\[
 \phi^n \left(\Frobp{\ba^{\lceil t(p^{ne} - 1) \rceil} J}{ne} \right) \subseteq J
\]
for all $n \geq 0$.
\end{lemma}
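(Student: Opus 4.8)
Write $\sT$ for the ideal on the right-hand side: the unique smallest nonzero ideal of $R$ with $\phi^{n}\big((\ba^{\lceil t(p^{ne}-1)\rceil}\sT)^{1/p^{ne}}\big)\subseteq\sT$ for all $n\ge 0$, where $e$ is the Frobenius degree of $\phi$ (existence of such a smallest ideal follows from the same machinery as the forthcoming Lemma~\ref{Lemma:ExistOfTestElts}). The plan is to prove the two inclusions $\sT\subseteq\tau_{b}(X;\Delta,\ba^{t})$ and $\tau_{b}(X;\Delta,\ba^{t})\subseteq\sT$, so that the stated equality and minimality characterization both follow.

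For $\sT\subseteq\tau_{b}(X;\Delta,\ba^{t})$: recall from the discussion following \eqref{eq:divmapcorr} (see \cite{SchwedeFAdjunction}) that the correspondence $\phi\leadsto\Delta_{\phi}$ is stable under Frobenius iteration, so that $\Delta_{\phi^{n}}=\Delta_{\phi}=\Delta$ for all $n\ge 0$, with $\phi^{n}\in\Hom_{R}(R^{1/p^{ne}},R)$. Thus each $\phi^{n}$ is among the maps occurring in Definition~\ref{DefnBigTestIdeal}, with $ne$ in the role of the exponent $e$ there. Consequently $\tau_{b}(X;\Delta,\ba^{t})$ automatically satisfies $\phi^{n}\big((\ba^{\lceil t(p^{ne}-1)\rceil}\tau_{b})^{1/p^{ne}}\big)\subseteq\tau_{b}$ for every $n$, i.e. it is one of the competing ideals $J$ in the definition of $\sT$, whence $\sT\subseteq\tau_{b}(X;\Delta,\ba^{t})$ by minimality of $\sT$.

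For the reverse inclusion, by minimality of $\tau_{b}(X;\Delta,\ba^{t})$ it suffices to check that $\sT$ satisfies the full condition of Definition~\ref{DefnBigTestIdeal}: that $\psi\big((\ba^{\lceil t(p^{e'}-1)\rceil}\sT)^{1/p^{e'}}\big)\subseteq\sT$ for every $e'\ge 0$ and every $\psi\in\Hom_{R}(R^{1/p^{e'}},R)$ with $\Delta_{\psi}\ge\Delta$. The key is to compare such a $\psi$ with the iterates of $\phi$. Choose a common multiple $d=ne=me'$ of $e$ and $e'$; then $\phi^{n}$ and $\psi^{m}$ both lie in $\Hom_{R}(R^{1/p^{d}},R)$, which is a rank-one reflexive $R^{1/p^{d}}$-module since localizing at any height one prime makes it free over the resulting $F$-finite DVR, as in \eqref{eq:1}. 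Hence $\psi^{m}(\blank)=\phi^{n}(c\cdot\blank)$ for a unique $c\in\Frac(R^{1/p^{d}})$, and checking at height one primes — where the inequality $\Delta_{\psi^{m}}=\Delta_{\psi}\ge\Delta_{\phi}=\Delta_{\phi^{n}}$ translates, via the coefficient recipe of \eqref{eq:divmapcorr}, into $\ord(c)\ge 0$ — together with normality of $R^{1/p^{d}}$ forces $c\in R^{1/p^{d}}$, say $c=r^{1/p^{d}}$ with $r\in R$. Since $r\sT\subseteq\sT$ and $\sT$ is $\phi^{n}$-stable, $\sT$ is then stable under $\psi^{m}=\phi^{n}(r^{1/p^{d}}\cdot\blank)$ as well, the $\ba$-exponent $\lceil t(p^{d}-1)\rceil$ being common to both.

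The step I expect to be the main obstacle is passing from stability under the iterate $\psi^{m}$ to stability under $\psi$ itself — precisely because, when $e\nmid e'$, the map $\psi$ is not literally a composition of copies of $\phi$ with multiplications, so this cannot be extracted formally from stability under its powers. I would resolve it by working with the generation of the test ideal from below: fixing a big test element $c_{0}$ (from Lemma~\ref{Lemma:ExistOfTestElts}) so that $\tau_{b}(X;\Delta,\ba^{t})=\sum_{e'\ge 0}\sum_{\psi}\psi\big((\ba^{\lceil t(p^{e'}-1)\rceil}c_{0})^{1/p^{e'}}\big)$ and, by the analogous statement for the single map, $\sT=\sum_{n\ge 0}\phi^{n}\big((\ba^{\lceil t(p^{ne}-1)\rceil}c_{0})^{1/p^{ne}}\big)$, and then showing each left-hand summand already lies in $\sT$; the identity $\psi^{m}=\phi^{n}(r^{1/p^{d}}\cdot\blank)$ together with $R$-linearity of the maps and the nesting $\ba^{\lceil t(p^{ne}-1)\rceil}\subseteq\ba^{\lceil t(p^{d'}-1)\rceil}$ for $d'\le ne$ reduces the claim to bookkeeping with the $\ba$-coefficients along iterates. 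Everything else — the divisor computations, the rank-one claim for $\Hom_{R}(R^{1/p^{d}},R)$, and the two minimality arguments — is routine given what is recalled around \eqref{eq:divmapcorr} and \eqref{eqn:ComposeMapWithSelf}.
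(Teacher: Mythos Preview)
The paper does not give its own proof of this lemma; it simply cites \cite[Proposition~4.8]{SchwedeFAdjunction}.  So there is nothing to compare against directly, and your attempt must be evaluated on its own merits.

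Your overall structure is correct, and the inclusion $\sT\subseteq\tau_b(X;\Delta,\ba^t)$ is handled cleanly: since $\Delta_{\phi^n}=\Delta$, each $\phi^n$ appears among the maps in Definition~\ref{DefnBigTestIdeal}, so $\tau_b$ competes in the minimality defining $\sT$.  Likewise, your observation that in degree $d=ne=me'$ one has $\psi^m=\phi^n(r^{1/p^d}\cdot\blank)$ for some $r\in R$ is exactly right and follows from the rank-one reflexive structure of $\Hom_R(R^{1/p^d},R)$.

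The genuine gap is precisely where you flag it: descending from $\psi^m$-stability to $\psi$-stability when $e\nmid e'$.  Your proposed resolution via the test-element sums does not close this gap as written.  Chasing the summand $\psi\big((\ba^{\lceil t(p^{e'}-1)\rceil}c)^{1/p^{e'}}\big)$ through compositions with $(\phi^n)^{1/p^{e'}}$ produces maps $\psi\circ(\phi^n)^{1/p^{e'}}\in I_{\Delta,\,e'+ne}$, but $e'+ne\equiv e'\pmod e$, so you never land back in a degree divisible by $e$ and hence never express the result via an iterate $\phi^N$.  The phrase ``reduces the claim to bookkeeping with the $\ba$-coefficients'' hides exactly this degree-mismatch, which is not a bookkeeping issue at all.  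The argument in the cited reference does not proceed by a naive comparison of the two sums; one either passes through an alternate (tight-closure style) characterization of $\tau_b$, or first establishes that in the construction of Proposition~\ref{prop:ConstructionOfTestIdeal} one may restrict the outer sum to exponents divisible by $e$ --- a statement that itself requires justification beyond what you have sketched.  If you want a self-contained route, a workable strategy is to show that the colon-type ideal of elements $r$ with $\psi\big((\ba^{\lceil t(p^{e'}-1)\rceil}r)^{1/p^{e'}}\big)\subseteq\sT$ for \emph{all} $e',\psi$ is nonzero and satisfies condition (B), forcing it to contain $\sT$; but verifying nonvanishing of that ideal again requires input from the test-element machinery beyond Lemma~\ref{Lemma:ExistOfTestElts} as stated.
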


The following lemma is a technical tool which guarantees that
Definition \ref{DefnBigTestIdeal} is well-posed.  In fact, in
Proposition~\ref{prop:ConstructionOfTestIdeal} we will see that it
provides a recipe for constructing big test ideals.   We have omitted a proof as we will
not make use of the techniques in what follows;  the argument is essentially the same as that which shows test elements exist in $F$-finite reduced rings.

\begin{lemma} \cite{HochsterHunekeTC1}, \cite[Remark 3.2]{HochsterHunekeFRegularityTestElementsBaseChange}, \cite[Proposition 6.14]{SchwedeFAdjunction}  \label{Lemma:ExistOfTestElts}
 Let $X = \Spec R$ be an $F$-finite normal integral affine scheme,
 $\ba \neq 0$ an ideal sheaf, and $t \geq 0$ a real number.  Suppose that $\psi \in \Hom_R(\Frob{R}{e}, R)$ is a non-zero element, and denote by $\psi^{n}$ the $n$-th Frobenius iterate of $\psi$ as in (\ref{eqn:ComposeMapWithSelf}).   Then there exists an element $c \in R\setminus \{0\}$ such that for every $d \in R\setminus \{0\}$ there is an integer $n > 0$ with
\begin{equation} \label{EqnHomogTestElement}
 c \in \psi^{n} \left(\Frobp{d \ba^{\lceil t(p^{ne} - 1) \rceil} }{ne} \right).
\end{equation}
\end{lemma}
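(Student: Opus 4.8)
The plan is to reproduce the classical construction of test elements in an $F$-finite reduced ring, while carrying along both the fixed map $\psi$ and the ideal datum $\ba^{t}$. First I would record the routine preliminary reductions. Since $R$ is a domain and $\psi\neq 0$, every Frobenius iterate $\psi^{n}$ is again non-zero, which is what makes the generic analysis below meaningful. Since $\ba\neq 0$ I may fix $0\neq a\in\ba$ and replace $\ba^{\lceil t(p^{ne}-1)\rceil}$ throughout by the principal subideal generated by $a^{\lceil t(p^{ne}-1)\rceil}$; thus it suffices to produce a single $0\neq c\in R$ with $c\in\psi^{n}\!\left(\Frobp{d\,a^{\lceil t(p^{ne}-1)\rceil}}{ne}\right)$ for every $d$.

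Next I would pass to the regular locus. As $R$ is normal and excellent (being $F$-finite), its regular locus is a non-empty open set, so I choose $0\neq c_{1}$ with $R_{c_{1}}$ regular, and then enlarge to an element $c_{2}$ divisible by $c_{1}$, by $a$, and by the non-zero element where $\psi$ degenerates (that is, where $\psi$ fails to be a free generator of $\Hom(F^{e}_{*}R,R)$ over $F^{e}_{*}R$). Over $R_{c_{2}}$ one then has: $R_{c_{2}}$ is regular, hence strongly $F$-regular; $a$ is a unit; $\Hom_{R_{c_{2}}}(F^{e}_{*}R_{c_{2}},R_{c_{2}})$ is free of rank one over $F^{e}_{*}R_{c_{2}}$, with a generator $\Phi$ whose iterates $\Phi^{n}$ generate $\Hom_{R_{c_{2}}}(F^{ne}_{*}R_{c_{2}},R_{c_{2}})$ (cf. the generation fact already invoked in the proof of Theorem \ref{thm:mainliftingcriterion}); and $\psi^{n}|_{R_{c_{2}}}$ is $\Phi^{n}$ precomposed with multiplication by a unit, hence also a generator. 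Now fix $d\neq 0$. Strong $F$-regularity of $R_{c_{2}}$ yields some $n$ and $\theta\in\Hom_{R_{c_{2}}}(F^{ne}_{*}R_{c_{2}},R_{c_{2}})$ with $\theta\!\left(\Frobp{d\,a^{\lceil t(p^{ne}-1)\rceil}}{ne}\right)=1$; writing $\theta$ as a $\psi^{n}$-multiple, clearing denominators against $c_{2}$, and using that $\psi^{n}$ is $p^{-ne}$-linear (so powers of $c_{2}$ can be absorbed into, and factored out of, the argument of $\psi^{n}$), one obtains a power $c_{2}^{\,l}\in\psi^{n}\!\left(\Frobp{d\,a^{\lceil t(p^{ne}-1)\rceil}}{ne}\right)$ as an ideal of $R$, where a priori the exponent $l$ depends on $d$.

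The remaining point — showing that the exponent $l$ can be taken bounded, so that a single $c:=c_{2}^{N}$ works for all $d$ — is the genuinely delicate step, and is exactly the uniformity phenomenon underpinning the existence of test elements. I would handle it by the standard iteration argument: given the witness produced for $d$, compose the associated $p^{-ne}$-linear map with the witness obtained by running the same construction on the element $a$ (or on $c_{2}^{\,l-1}$), using that $\psi$, being $\Phi$ times a unit over $R_{c_{2}}$, remains surjective on $R_{c_{2}}$ under Frobenius iteration, so each extra Frobenius step costs only a controlled power of $c_{2}$; combined with the fact that $\lceil t(p^{(n+m)e}-1)\rceil$ grows in a controlled manner, this keeps the argument of $\psi^{n+m}$ inside the appropriate power of $a$ while reducing the denominator exponent, until it stabilizes at a value independent of $d$. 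I expect this uniformity bookkeeping to be the main obstacle; everything else is formal, given the standard facts about regular $F$-finite rings. This is precisely the argument establishing the existence of test elements in $F$-finite reduced rings; cf. \cite{HochsterHunekeTC1}, \cite[Remark 3.2]{HochsterHunekeFRegularityTestElementsBaseChange}, and \cite[Proposition 6.14]{SchwedeFAdjunction}.
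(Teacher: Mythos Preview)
The paper does not actually prove this lemma; it explicitly omits the proof, stating only that ``the argument is essentially the same as that which shows test elements exist in $F$-finite reduced rings'' and pointing to the cited references together with Remark~\ref{RemarkExistenceHelp}. Your outline is precisely this standard test-element construction (localize to the regular locus where $\psi$ generates, invoke strong $F$-regularity there, then run the iteration argument to bound the exponent uniformly), so your approach matches what the paper intends.
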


\begin{remark}
Note that if $c \in R$ satisfies the conclusion of Lemma \ref{Lemma:ExistOfTestElts}, then for any $c' \in R\setminus \{0\}$, the product $c' c$ also satisfies the conclusion of Lemma \ref{Lemma:ExistOfTestElts}.
\end{remark}

\begin{remark}
\label{RemarkExistenceHelp}
If $c \in \ba \cap R\setminus \{0\}$ is such that $R_c$ is regular and $\psi/1$ generates $\Hom_{R_c}(\Frob{R_c}{e}, R_c)$ as an $\Frob{R_c}{e}$-module, then some power of $c$ can be shown to satisfy the conclusion of Lemma \ref{Lemma:ExistOfTestElts} for $R$.
\end{remark}

We will also use the following notation throughout.
\begin{definition}
\label{DefnIDeltaJDelta}
Given an affine pair $(X = \Spec R, \Delta)$ with $\Delta \geq 0$,
for convenience we set
\[
\begin{array}{rcl}
I_{\Delta, \, e} &= & \Big\{ \left. \, \, \phi \in \Hom_R(\Frob{R}{e}, R) \, \, \right| \, \,  \Delta_{\phi} \geq \Delta \, \, \Big\}\\
& = &
\Image\Bigg(\Hom_R\left(\Frobp{R(\lceil (p^e - 1)\Delta \rceil)}{e}, R\right) \rightarrow \Hom_R(\Frob{R}{e}, R) \Bigg)
\end{array}
\]
and
\[
\begin{array}{rcl}
J_{\Delta,\, e} &=&  \Big\{ \left. \, \, \phi \in \Hom_R(\Frob{R}{e}, R) \, \, \right| \, \,  \Delta_{\phi} \geq \frac{1}{p^{e}-1} \lfloor (p^{e}-1) \Delta \rfloor \, \, \Big\}\\
& = & \Image\Bigg(\Hom_R\left(\Frobp{R(\lfloor (p^e - 1)\Delta \rfloor)}{e}, R\right) \rightarrow \Hom_R(\Frob{R}{e}, R) \Bigg).
\end{array}
\]
\end{definition}

\begin{proposition}
\label{prop:ConstructionOfTestIdeal}
Suppose $(X = \Spec R, \Delta, \ba^{t})$ is an
affine triple with $\Delta \geq 0$.  Choose any non-zero map $\psi \in \Hom_R(\Frob{R}{e}, R)$ such that $\Delta_{\psi} \geq \Delta$ and following Lemma \ref{Lemma:ExistOfTestElts} pick an element $c \in R\setminus \{0\}$ satisfying (\ref{EqnHomogTestElement}).  Then the big test ideal $\tau_b(X, \Delta, \ba^t)$ is given by
\[
 \tau_b(X; \Delta, \ba^t) = \sum_{e \geq 0} \left( \sum_{\phi \in I_{\Delta, e}} \phi \left(\Frobp{\ba^{\lceil t(p^e - 1) \rceil} c}{e} \right) \right).
\]
\end{proposition}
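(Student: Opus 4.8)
The plan is to prove the two inclusions separately; write $\mathfrak{t}$ for the right-hand side of the asserted formula. I would first verify that $\mathfrak{t}$ is a nonzero ideal satisfying the compatibility condition that characterizes $\tau_b(X;\Delta,\ba^t)$ in Definition \ref{DefnBigTestIdeal}, so that minimality of $\tau_b(X;\Delta,\ba^t)$ forces $\tau_b(X;\Delta,\ba^t)\subseteq\mathfrak{t}$, and then establish the reverse inclusion $\mathfrak{t}\subseteq\tau_b(X;\Delta,\ba^t)$. That $\mathfrak{t}$ is an ideal is immediate: for any ideal $J\subseteq R$ and any $\phi\in\Hom_R(\Frob{R}{e},R)$ the set $\phi(\Frobp{J}{e})$ is an ideal, since $\Frobp{x}{e}+\Frobp{y}{e}=\Frobp{x+y}{e}$ in characteristic $p$ and $r\cdot\phi(\Frobp{x}{e})=\phi(\Frobp{r^{p^e}x}{e})$; thus each summand, and therefore $\mathfrak{t}$, is an ideal. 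For $\mathfrak{t}\neq 0$, apply Lemma \ref{Lemma:ExistOfTestElts} with $d=c$: since $\Delta_{\psi^n}=\Delta_\psi\geq\Delta$ the iterate $\psi^n$ lies in $I_{\Delta,ne}$, so for a suitable $n$ one gets $c\in\psi^n\big(\Frobp{c\,\ba^{\lceil t(p^{ne}-1)\rceil}}{ne}\big)\subseteq\mathfrak{t}$.

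The main step is to show $\mathfrak{t}$ is compatible, i.e. $\phi'\big(\Frobp{\ba^{\lceil t(p^{e'}-1)\rceil}\mathfrak{t}}{e'}\big)\subseteq\mathfrak{t}$ for every $e'\geq 0$ and every $\phi'\in I_{\Delta,e'}$. By additivity it suffices to handle one summand of $\mathfrak{t}$, i.e. to bound $\phi'\big(\Frobp{\ba^{\lceil t(p^{e'}-1)\rceil}\cdot\phi(\Frobp{\ba^{\lceil t(p^e-1)\rceil}c}{e})}{e'}\big)$ for $\phi\in I_{\Delta,e}$. Using $p^{-e}$-linearity to pull the outer $\ba$-factor inside $\phi$, together with the elementary inequality
\[
p^e\lceil t(p^{e'}-1)\rceil+\lceil t(p^e-1)\rceil\ \geq\ \lceil t(p^{e+e'}-1)\rceil ,
\]
a direct computation shows this set is contained in $(\phi'\circ F^{e'}_*\phi)\big(\Frobp{\ba^{\lceil t(p^{e+e'}-1)\rceil}c}{e+e'}\big)$. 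It remains to check $\phi'\circ F^{e'}_*\phi\in I_{\Delta,e+e'}$: working at each height-one prime and writing $\phi,\phi'$ as a generator of the relevant $\Hom$-module premultiplied by $\Frobp{f}{e}$ and $\Frobp{g}{e'}$, the composition of two such generators is again a generator (\cite[Appendix F]{KunzKahlerDifferentials}, \cite[Lemma 3.9]{SchwedeFAdjunction}), so $\ord_E(\Delta_{\phi'\circ F^{e'}_*\phi})=\tfrac{p^e\ord_E(g)+\ord_E(f)}{p^{e+e'}-1}$, and $\ord_E(f)\geq(p^e-1)\ord_E(\Delta)$, $\ord_E(g)\geq(p^{e'}-1)\ord_E(\Delta)$ give $\ord_E(\Delta_{\phi'\circ F^{e'}_*\phi})\geq\ord_E(\Delta)$. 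Hence the set in question lies in $\mathfrak{t}$; compatibility, together with $\mathfrak{t}\neq 0$, yields $\tau_b(X;\Delta,\ba^t)\subseteq\mathfrak{t}$.

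For the reverse inclusion I would first prove $c\in\tau_b(X;\Delta,\ba^t)$. Fix any $0\neq d\in\tau_b(X;\Delta,\ba^t)$; by Lemma \ref{Lemma:ExistOfTestElts} there is $n>0$ with
\[
c\in\psi^n\big(\Frobp{d\,\ba^{\lceil t(p^{ne}-1)\rceil}}{ne}\big)\subseteq\psi^n\big(\Frobp{\ba^{\lceil t(p^{ne}-1)\rceil}\,\tau_b(X;\Delta,\ba^t)}{ne}\big)\subseteq\tau_b(X;\Delta,\ba^t),
\]
where the last inclusion uses compatibility of $\tau_b(X;\Delta,\ba^t)$ and $\psi^n\in I_{\Delta,ne}$. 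Since $c\in\tau_b(X;\Delta,\ba^t)$, for every $e$ and every $\phi\in I_{\Delta,e}$ we get $\phi\big(\Frobp{\ba^{\lceil t(p^e-1)\rceil}c}{e}\big)\subseteq\phi\big(\Frobp{\ba^{\lceil t(p^e-1)\rceil}\tau_b(X;\Delta,\ba^t)}{e}\big)\subseteq\tau_b(X;\Delta,\ba^t)$ by compatibility; summing over all $e$ and all $\phi\in I_{\Delta,e}$ gives $\mathfrak{t}\subseteq\tau_b(X;\Delta,\ba^t)$, hence equality.

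I expect the genuine obstacle to be the compatibility verification in the second paragraph: one must keep the $\ba$-exponents tight through the $p^{-e}$-linear absorption and the Frobenius twist, and the statement $\phi'\circ F^{e'}_*\phi\in I_{\Delta,e+e'}$ is precisely where the rescaling by $1/(p^e-1)$ in the map--divisor correspondence \eqref{eq:divmapcorr} and the composition-of-generators fact are essential. Everything else is formal bookkeeping with ideals, given the already-established existence of the element $c$ in Lemma \ref{Lemma:ExistOfTestElts}.
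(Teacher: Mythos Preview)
Your proof is correct and follows the same approach as the paper. The paper's proof is very terse: it observes that $c$ lies in the displayed sum (taking $d=c$ in Lemma~\ref{Lemma:ExistOfTestElts}), that any nonzero ideal $J$ satisfying the compatibility conditions of Definition~\ref{DefnBigTestIdeal} must contain $c$, and then concludes that the sum is the unique smallest such ideal. You have filled in precisely the details the paper leaves implicit, most notably the verification that the sum $\mathfrak{t}$ is itself compatible (closed under $\phi'(\Frobp{\ba^{\lceil t(p^{e'}-1)\rceil}\,\blank}{e'})$ for all $\phi'\in I_{\Delta,e'}$), which indeed requires knowing that $I_{\Delta,e'}\circ F^{e'}_*I_{\Delta,e}\subseteq I_{\Delta,e+e'}$ together with the exponent inequality you state.
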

\begin{proof}
Clearly the sum above contains the element $c \neq 0$ (set $d = c$ in Lemma \ref{Lemma:ExistOfTestElts}), and so is a non-zero ideal.
 On the other hand, note that any non-zero ideal $J$ satisfying the conditions of Definition \ref{DefnBigTestIdeal} will certainly contain $c$.  Thus, we see that the sum is the unique smallest ideal $J$ which satisfies the conditions of Definition \ref{DefnBigTestIdeal}.
\end{proof}

\begin{remark}
 Suppose that $c$ satisfies the condition of Lemma
 \ref{Lemma:ExistOfTestElts}.  Then for any multiplicative set $W$ of
 $R$ with $Y = \Spec W^{-1} R$, $c/1$ also satisfies the condition of
 Lemma \ref{Lemma:ExistOfTestElts} for the triple $(Y, \Delta|_Y,
 (W^{-1} \ba)^t)$.  It immediately follows that the formation of
 $\tau_b(X, \Delta, \ba^t)$ commutes with localization,
 \cite{HaraTakagiOnAGeneralizationOfTestIdeals}.  In particular, we
 may define $\tau_b(X, \Delta, \ba^t)$ when $X$ is a non-affine scheme
 by gluing over affine charts.
\end{remark}

\begin{remark}
\label{rem:TwoIdealsAgree}  \cite[Theorem 2.8]{TakagiInterpretationOfMultiplierIdeals}, \cite[Proposition 3.7]{BlickleSchwedeTakagiZhang}
Because finitistic test ideals $\tau(X, \Delta, \ba^t)$ appear so frequently in the literature (\eg see \cite{HaraYoshidaGeneralizationOfTightClosure}), we recall the following result.
Consider an 
affine triple $(X = \Spec R, \Delta, \ba^{t})$ with $\Delta \geq 0$.
If $K_X + \Delta$ is $\Q$-Cartier, then the ideals $\tau(X;\Delta, \ba^t)$ and $\tau_b(X;\Delta, \ba^t)$ coincide.
\end{remark}

We will also need a definition for test ideals $\tau_b(X; \Delta, \ba^t)$ under the hypothesis that $\Delta$ is not necessarily effective.  To that end, let us first recall a variant of Skoda's theorem for test ideals  (\cf \cite[Section 9.6]{LazarsfeldPositivity2}).  We include a proof both for the sake of completeness and also because our notation differs slightly from that in the literature.

\begin{lemma} \cite[Theorem 4.1]{HaraTakagiOnAGeneralizationOfTestIdeals}
\label{Lemma:SkodaTypeTheorem}
Suppose that $(X = \Spec R, \Delta, \ba^t)$ is an affine triple with $\Delta \geq 0$.
Assume that $f \in R$ is any non-zero element.  Then
\[
 \tau_b(X; \Delta + \Div(f), \ba^t) = f \cdot \tau_b(X; \Delta, \ba^{t}) .
\]
\end{lemma}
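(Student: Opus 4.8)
The plan is to set up a bijective ``twist by $f^{p^e-1}$'' between the sets $I_{\Delta,e}$ and $I_{\Delta+\Div(f),e}$ of Definition \ref{DefnIDeltaJDelta}, and then transport the minimality defining the big test ideal (Definition \ref{DefnBigTestIdeal}) across it. For $\phi\in\Hom_R(R^{1/p^e},R)$ I would set $\psi_\phi(\blank):=\phi\bigl((f^{p^e-1})^{1/p^e}\cdot\blank\bigr)$; since $f^{p^e-1}\in R$ this is again an element of $\Hom_R(R^{1/p^e},R)$, and $\phi\mapsto\psi_\phi$ is plainly injective (multiplication by $(f^{p^e-1})^{1/p^e}$ is a bijection of $K^{1/p^e}$). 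Checking at the generic point of a prime divisor $E$ — a DVR, with $\Hom_{\O_{X,E}}(\O_{X,E}^{1/p^e},\O_{X,E})$ free of rank one over $\O_{X,E}^{1/p^e}$, as recalled in Section \ref{sec:pminuselinearmaps} — if $\phi$ has local form $\Phi_E(g^{1/p^e}\cdot\blank)$ with $\Phi_E$ a local generator, then $\psi_\phi$ has local form $\Phi_E\bigl((gf^{p^e-1})^{1/p^e}\cdot\blank\bigr)$, so $\ord_E(\Delta_{\psi_\phi})=\tfrac{1}{p^e-1}\ord_E(g)+\ord_E(f)$, i.e. $\Delta_{\psi_\phi}=\Delta_\phi+\Div(f)$ (via \eqref{eq:divmapcorr}). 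In particular $\phi\mapsto\psi_\phi$ carries $I_{\Delta,e}$ into $I_{\Delta+\Div(f),e}$.

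For surjectivity onto $I_{\Delta+\Div(f),e}$, given $\psi$ in that set the only candidate preimage is the generic $p^{-e}$-linear map $\phi:=\psi\circ\bigl(\text{multiplication by }(f^{p^e-1})^{-1/p^e}\bigr)$, and I must check $\phi(R^{1/p^e})\subseteq R$; since $R$ is normal this can be verified at each prime divisor $E$. Write $\psi$ locally at $E$ as $\Phi_E(d^{1/p^e}\cdot\blank)$ with $\Phi_E$ a local generator and $d\in\O_{X,E}$, so that $\ord_E(d)=(p^e-1)\ord_E(\Delta_\psi)\geq(p^e-1)\ord_E(f)$, using $\Delta_\psi\geq\Delta+\Div(f)\geq\Div(f)$ and $\Delta\geq0$. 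Then $d=f^{p^e-1}d'$ with $d'\in\O_{X,E}$, and the factor $f^{p^e-1}$ cancels: for $x\in\O_{X,E}$ one gets $\phi(x^{1/p^e})=\Phi_E\bigl((dx/f^{p^e-1})^{1/p^e}\bigr)=\Phi_E\bigl((d'x)^{1/p^e}\bigr)\in\O_{X,E}$. Hence $\phi\in\Hom_R(R^{1/p^e},R)$ with $\Delta_\phi=\Delta_\psi-\Div(f)\geq\Delta$, so $\phi\in I_{\Delta,e}$ and $\psi=\psi_\phi$; the twist is a bijection $I_{\Delta,e}\xrightarrow{\ \sim\ }I_{\Delta+\Div(f),e}$.

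Finally I would deduce the two inclusions using only this bijection and the elementary identity $(f^{p^e}M)^{1/p^e}=f\cdot M^{1/p^e}$; abbreviate $\ba_e:=\ba^{\lceil t(p^e-1)\rceil}$ and put $J:=\tau_b(X;\Delta,\ba^t)$. For every $\psi=\psi_\phi\in I_{\Delta+\Div(f),e}$ one has $\psi\bigl((\ba_e fJ)^{1/p^e}\bigr)=\phi\bigl((f^{p^e}\ba_e J)^{1/p^e}\bigr)=f\cdot\phi\bigl((\ba_e J)^{1/p^e}\bigr)\subseteq fJ$, so the non-zero ideal $fJ$ satisfies the condition defining $\tau_b(X;\Delta+\Div(f),\ba^t)$, whence $\tau_b(X;\Delta+\Div(f),\ba^t)\subseteq fJ$. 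Conversely, writing $J':=\tau_b(X;\Delta+\Div(f),\ba^t)\subseteq fJ\subseteq fR$, the ideal $\widetilde J:=f^{-1}J'\subseteq R$ is defined and non-zero, and for $\phi\in I_{\Delta,e}$ the identity $f\cdot\phi\bigl((\ba_e\widetilde J)^{1/p^e}\bigr)=\psi_\phi\bigl((\ba_e J')^{1/p^e}\bigr)\subseteq J'=f\widetilde J$, together with cancellation of the non-zero $f$, shows $\widetilde J$ satisfies the condition defining $\tau_b(X;\Delta,\ba^t)$; hence $J\subseteq\widetilde J$ and $fJ\subseteq J'$. The two inclusions give $\tau_b(X;\Delta+\Div(f),\ba^t)=f\cdot\tau_b(X;\Delta,\ba^t)$.

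The step I expect to be the main obstacle is surjectivity of the twist: one must show that the \emph{a priori} merely rational map $\psi\circ(\text{mult.\ by }(f^{p^e-1})^{-1/p^e})$ genuinely takes $R^{1/p^e}$ into $R$, and this is exactly where the hypothesis $\Delta_\psi\geq\Div(f)$ (hence $\geq\Delta+\Div(f)$) is essential. Everything else is bookkeeping with the dictionary between $p^{-e}$-linear maps and divisors from Section \ref{sec:pminuselinearmaps} and the extremal property of $\tau_b$. Note finally that, since this Skoda-type statement is itself used later to \emph{define} $\tau_b(X;\Delta,\ba^t)$ for non-effective $\Delta$, the argument must --- and does --- stay within the effective framework of Definition \ref{DefnBigTestIdeal}, which applies here because both $\Delta$ and $\Delta+\Div(f)$ are effective.
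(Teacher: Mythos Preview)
Your proof is correct. Both your argument and the paper's hinge on the same bijection $I_{\Delta,e}\xrightarrow{\sim}I_{\Delta+\Div(f),e}$ given by pre-multiplication by $(f^{p^e-1})^{1/p^e}$; you make this explicit and verify surjectivity carefully, whereas the paper leaves it implicit in the line
\[
\sum_{\phi\in I_{\Delta,e}}\phi\bigl((c'f^{p^e}\ba^{\lceil t(p^e-1)\rceil})^{1/p^e}\bigr)
=\sum_{\psi\in I_{\Delta+\Div(f),e}}\psi\bigl(((c'f)\ba^{\lceil t(p^e-1)\rceil})^{1/p^e}\bigr).
\]
The genuine difference is in how the equality of ideals is concluded: the paper invokes the explicit description of $\tau_b$ as a sum over test elements (Proposition~\ref{prop:ConstructionOfTestIdeal}, which in turn needs Lemma~\ref{Lemma:ExistOfTestElts}), computing both sides directly; you instead work purely from the minimality in Definition~\ref{DefnBigTestIdeal}, showing $fJ$ and $f^{-1}J'$ each satisfy the relevant stability condition. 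Your route is a bit longer but more self-contained, avoiding any appeal to the existence of test elements; the paper's route is a one-line chain of equalities once the construction formula is in hand. Either is fine, and your closing remark that the argument must (and does) remain within the effective framework is apt.
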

\begin{proof}
 One may choose $c' \in R\setminus \{0\}$ satisfying the conditions of
 Lemma \ref{Lemma:ExistOfTestElts} for both of the triples $(X, \Delta + \Div(f), \ba^t)$ and $(X, \Delta, \ba^t)$.
Then observe that
\[
\begin{array}{rcl}
 f \cdot \tau_b(X; \Delta, \ba^{t})
&=& f \cdot \sum_{e \geq 0} \left( \sum_{\phi \in I_{\Delta, e}} \phi \left(\Frobp{c' \ba^{\lceil t(p^e - 1) \rceil} }{e} \right) \right) \\
&=& \sum_{e \geq 0} \left( \sum_{\phi \in I_{\Delta, e}} \phi \left(\Frobp{c' f^{p^e} \ba^{\lceil t(p^e - 1) \rceil}}{e} \right) \right) \\
&=& \sum_{e \geq 0} \left( \sum_{\phi \in I_{\Delta + \Div(f), e}} \phi \left(\Frobp{ (c' f) \ba^{\lceil t(p^e - 1) \rceil} }{e} \right)\right) \smallskip \\
&=& \tau_b(X; \Delta + \Div(f), \ba^t)
\end{array}
\]
as desired.
\end{proof}

Inspired by the above result, we define the big test ideal for non-effective divisors as follows.

\begin{definition}
 Suppose that $(X = \Spec R, \Delta, \ba^t)$ is an affine triple where $\Delta$ is a possibly non-effective divisor.  Choose an element $f \in R$ such that $\Div(f) + \Delta$ is effective.  We then define $\tau_b(X; \Delta, \ba^t)$ to be the fractional ideal
\[
 \tau_b(X; \Delta, \ba^t) = {1 \over f} \tau_b(X; \Delta + \Div(f), \ba^{t}).
\]
It is not difficult to verify that this definition is independent of the choice of $f \in R$.  When $X$ is not affine, $\tau_b(X; \Delta, \ba^{t})$ is given by gluing over a cover of $X$ by affine charts.
\end{definition}

We conclude this section with the definitions of strongly $F$-regular and sharply $F$-pure triples. These classes of singularities are analogous, respectively, to log terminal and log canonical singularities for complex algebraic varieties; see \cite{HaraWatanabeFRegFPure}.

\begin{definition}
 If $R$ is a local ring, then a triple $(X = \Spec R, \Delta, \ba^t)$ with $\Delta \geq 0$ is called \emph{strongly $F$-regular} if, for every $d \in \Spec R$, there exists an $e > 0$ and  $\phi \in I_{\Delta, e}$ such that $$1 \in \phi\left(\Frobp{d \ba^{\lceil t(p^e - 1) \rceil} }{e}\right) \, \, .$$
More generally, an arbitrary triple $(X, \Delta, \ba^t)$ is called \emph{strongly $F$-regular} if, for every closed point $x \in X$, the triple associated to the stalk $\O_{X, x}$ is strongly $F$-regular.
\end{definition}

\begin{definition}
 If $R$ is a local ring, a triple $(X = \Spec R, \Delta, \ba^t)$ with $\Delta \geq 0$ is called \emph{sharply $F$-pure} if there exists an $e > 0$ and a $\phi \in I_{\Delta, e}$ such that $$1 \in \phi\left(\Frobp{ \ba^{\lceil (p^e - 1) \rceil} }{e}\right) \, \, .$$
 More generally,  an arbitrary triple $(X, \Delta, \ba^t)$ is called \emph{sharply $F$-pure} if, for every closed point $x \in X$, the triple associated to the stalk $\O_{X, x}$ is sharply $F$-pure.
\end{definition}

\begin{remark}
$F$-pure rings are also sometimes called \emph{(locally) $F$-split}.
\end{remark}

\begin{remark}
 In case $X$ is not the spectrum of a local ring (even in the case
 that $X$ is affine) the definitions of strongly $F$-regular and
 sharply $F$-pure triples given here differ from those in the literature; see
 \cite{TakagiInversion} and \cite{SchwedeSharpTestElements}.  However,
 the above definitions possess more desirable behavior with respect to localization; see \cite{SchwedeBetterFPureFRegular} for additional discussion.
\end{remark}

The following proposition collects together some well known facts about sharp $F$-purity and strong $F$-regularity.

\begin{proposition} \cite[Lemma 1.6]{FedderFPureRat}, \cite[Lemma 2.8]{SchwedeCentersOfFPurity}, \cite[Proposition 2.2(5)]{TakagiWatanabeFPureThresh}
Suppose that $R$ is a local ring and $(X = \Spec R, \Delta, \ba^t)$ is a triple with $\Delta \geq 0$.
\begin{itemize}
\item[(i)]  Then $(X, \Delta, \ba^t)$ is sharply $F$-pure if and only if there exists an integer $n_0 > 0$ such that for all integers $m > 0$ there is some $\phi \in I_{\Delta, m n_0}$ with
    \[
    \phi \left(\Frobp{\ba^{\lceil t(p^{m n_0} - 1) \rceil} }{m n_0} \right) = R.
    \]
\item[(ii)]  Suppose that $R$ is regular, $\Supp \Delta$ is a normal crossings divisor, and $\lceil \Delta \rceil$ is a reduced integral divisor.  Then $(X, \Delta)$ is sharply $F$-pure.
\item[(iii)]  Suppose that $R$ is regular, $\Supp \Delta$ is a normal crossings divisor, and $\lfloor \Delta \rfloor = 0$.  Then $(X, \Delta)$ is strongly $F$-regular.
\item[(iv)]  The triple $(X, \Delta, \ba^t)$ is strongly $F$-regular if and only if $\tau_b(X; \Delta, \ba^t) = R$.
\end{itemize}
\end{proposition}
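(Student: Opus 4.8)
The four assertions are standard and appear (in slightly varying formulations) in the cited sources, so the plan is to derive them from the construction of the big test ideal in Proposition~\ref{prop:ConstructionOfTestIdeal}, the existence of test elements in Lemma~\ref{Lemma:ExistOfTestElts}, and an explicit Fedder-style computation in a regular local ring. For~(i) the ``if'' direction is trivial: taking $m = 1$ gives a $\phi \in I_{\Delta, n_0}$ with $\phi(\Frobp{\ba^{\lceil t(p^{n_0}-1)\rceil}}{n_0}) = R$, so $1$ lies in the image and $(X,\Delta,\ba^t)$ is sharply $F$-pure by definition. For the ``only if'' direction, I would fix a $\phi \in I_{\Delta, e_0}$ witnessing sharp $F$-purity, so that $\phi(\Frobp{\ba^{\lceil t(p^{e_0}-1)\rceil}}{e_0}) = R$, and set $n_0 = e_0$. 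The $m$-th Frobenius iterate $\phi^m$ from~\eqref{eqn:ComposeMapWithSelf} again has $\Delta_{\phi^m} = \Delta_\phi \geq \Delta$, hence $\phi^m \in I_{\Delta, m n_0}$; and from $(p^{m n_0}-1)/(p^{n_0}-1) = \sum_{i=0}^{m-1}p^{i n_0}$ together with $\lceil p^{i n_0}x\rceil \le p^{i n_0}\lceil x\rceil$ one obtains $\lceil t(p^{m n_0}-1)\rceil \le \sum_{i=0}^{m-1}p^{i n_0}\lceil t(p^{n_0}-1)\rceil$, whence $\ba^{\lceil t(p^{m n_0}-1)\rceil}$ contains $\prod_{i=0}^{m-1}\bigl(\ba^{\lceil t(p^{n_0}-1)\rceil}\bigr)^{[p^{i n_0}]}$. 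Writing $\phi^m = \phi \circ F^{n_0}_*\phi \circ \cdots$, each successive copy of $\phi$ absorbs one factor $\bigl(\ba^{\lceil t(p^{n_0}-1)\rceil}\bigr)^{[p^{i n_0}]}$ by its $p^{-n_0}$-linearity together with the surjectivity of $\phi$; an induction on $m$ then yields $\phi^m(\Frobp{\ba^{\lceil t(p^{m n_0}-1)\rceil}}{m n_0}) = R$.

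For~(iv), if $(X,\Delta,\ba^t)$ is strongly $F$-regular, then applying the definition to $d = c$ for a test element $c$ as in Lemma~\ref{Lemma:ExistOfTestElts} produces $e$ and $\phi \in I_{\Delta,e}$ with $1 \in \phi(\Frobp{c\,\ba^{\lceil t(p^e-1)\rceil}}{e}) \subseteq \tau_b(X;\Delta,\ba^t)$, so $\tau_b(X;\Delta,\ba^t) = R$. Conversely, assume $\tau_b(X;\Delta,\ba^t) = R$ and fix $0 \neq d \in R$. The ideal $T_d := \sum_{e \geq 0}\sum_{\phi \in I_{\Delta,e}}\phi\bigl(\Frobp{d\,\ba^{\lceil t(p^e-1)\rceil}}{e}\bigr)$ is nonzero, since by Lemma~\ref{Lemma:ExistOfTestElts} applied to $d$ it contains the test element $c$, and it is stable under all $\psi \in I_{\Delta,e'}$ in the sense of Definition~\ref{DefnBigTestIdeal}, because $I_{\Delta,\bullet}$ is closed under composition (see~\cite{SchwedeFAdjunction}) and the ceiling estimates used for~(i) keep the powers of $\ba$ aligned; since $\tau_b(X;\Delta,\ba^t)$ is the \emph{smallest} nonzero ideal with this stability property, $T_d = R$. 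Because $R$ is local, $1 \in T_d = R$ forces some generator $\phi(\xi)$ of $T_d$, with $\phi \in I_{\Delta,e}$ and $\xi \in \Frobp{d\,\ba^{\lceil t(p^e-1)\rceil}}{e}$, to be a unit $u$; then $\phi\bigl((u^{-1})^{p^e}\xi\bigr) = 1$ with $(u^{-1})^{p^e}\xi \in \Frobp{d\,\ba^{\lceil t(p^e-1)\rceil}}{e}$, which is precisely the strong $F$-regularity condition for $d$.

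For~(ii) and~(iii), after possibly completing $R$, I would choose a regular system of parameters $x_1,\dots,x_d$ with $\Delta = \sum_{i=1}^n a_i\Div(x_i)$, which is possible because $\Supp\Delta$ is normal crossings; then $\Hom_R(\Frob{R}{e},R)$ is free of rank one over $\Frob{R}{e}$ with a generator $\Phi$ sending $\bigl(x_1^{p^e-1}\cdots x_d^{p^e-1}\bigr)^{1/p^e}$ to $1$ and the other basis elements to $0$, and for $g \in R$ one has $1 \in \Phi\bigl(g^{1/p^e}\cdot\Frob{R}{e}\bigr)$ if and only if $g \notin \m^{[p^e]}$. Put $c_i = \lceil(p^e-1)a_i\rceil$ and $\phi = \Phi\bigl((x_1^{c_1}\cdots x_n^{c_n})^{1/p^e}\cdot\blank\bigr)$; then $\Delta_\phi = \tfrac{1}{p^e-1}\sum_{i=1}^n c_i\Div(x_i) \geq \Delta$, so $\phi \in I_{\Delta,e}$. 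For~(ii), $\lceil\Delta\rceil$ reduced means $a_i \leq 1$, hence $c_i \leq p^e-1$ and $x_1^{c_1}\cdots x_n^{c_n} \notin \m^{[p^e]}$, so $\phi(\Frob{R}{e}) = R$ and $(X,\Delta)$ is sharply $F$-pure. For~(iii), $\lfloor\Delta\rfloor = 0$ means $a_i < 1$, so $p^e - c_i \geq (p^e-1)(1-a_i) \to \infty$; given $0 \neq d \in R$, for $e \gg 0$ the lowest-degree part of $d$ contributes a basis monomial that is not pushed into $\m^{[p^e]}$ after multiplication by $x_1^{c_1}\cdots x_n^{c_n}$, so $x_1^{c_1}\cdots x_n^{c_n}\,d \notin \m^{[p^e]}$, whence $1 \in \phi(\Frobp{d}{e})$, the strong $F$-regularity condition for $d$. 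The only genuinely delicate points in all of this are the bookkeeping of $\ba$-powers under iterated Frobenius in~(i) and~(iv), and, for non-perfect residue fields, a routine adjustment to the generator $\Phi$ in~(ii)--(iii) (handled exactly as in \cite{FedderFPureRat}); everything else is formal.
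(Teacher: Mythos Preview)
Your argument is correct.  For (i) and (iv) it is essentially the paper's proof with the bookkeeping spelled out: the paper simply cites \cite[Lemma~2.8]{SchwedeCentersOfFPurity} for (i), and for the converse of (iv) it invokes Lemma~\ref{Lemma:ExistOfTestElts} to conclude $T_d \supseteq \tau_b(X;\Delta,\ba^t)$ directly, whereas you verify the stability of $T_d$ under $I_{\Delta,\bullet}$ by hand.

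For (ii) and (iii) your route differs from the paper's.  You carry out an explicit Fedder-type computation with a monomial generator $\Phi$ of $\Hom_R(R^{1/p^e},R)$ and analyze when $x_1^{c_1}\cdots x_n^{c_n}\cdot d \notin \m^{[p^e]}$.  The paper instead reduces (ii) to the case where $\Delta$ itself is a reduced simple normal crossings divisor (using that $I_{\lceil\Delta\rceil,e}\subseteq I_{\Delta,e}$), invokes the $F$-purity of such a divisor and Fedder's criterion to produce a surjective $\phi\colon R^{1/p^e}\to R$ compatible with the ideal $R(-\Delta)$, and then twists by $R(\Delta)$ via the projection formula to exhibit $\phi$ as lying in $I_{\Delta,e}$; (iii) is deduced from (ii) by the perturbation argument of \cite[Proposition~2.2(5)]{TakagiWatanabeFPureThresh}.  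Your direct approach is self-contained and gives a quantitative handle on how large $e$ must be in (iii), while the paper's argument is more conceptual, highlights the role of compatible Frobenius splittings, and sidesteps any explicit description of $\Phi$ (in particular the residue-field issue you flag at the end).
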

\begin{proof}
The proof of (i) mirrors \cite[Lemma
2.8]{SchwedeCentersOfFPurity}; see also \cite[Proposition 3.3]{SchwedeSharpTestElements}.
For (ii), we note that it is sufficient to prove the statement when $\Delta$ itself is a reduced simple normal crossings divisor by the argument of \cite[Proposition 2.2(3)]{TakagiWatanabeFPureThresh} or \cite[Lemma 5.2]{SchwedeSharpTestElements}.  Normal crossing divisors are $F$-pure and so, by Fedder's criterion \cite[Lemma 1.6]{FedderFPureRat}, there exists a surjective map $\phi \: \Frob{R}{e} \rightarrow R$ which restricts to a map $\phi' \: \Frob{(R(-\Delta))}{e} \to R(-\Delta)$.  Twisting both sides by $R(\Delta)$ and applying the projection formula gives us a map
\[
 \Frob{\left(R((p^e - 1) \Delta)\right)}{e} \rightarrow R
\]
whose restriction to $R$ is $\phi$.  Thus (ii) is complete.
Furthermore, (iii) follows immediately from (ii) and the argument of \cite[Proposition 2.2(5)]{TakagiWatanabeFPureThresh}.

The forward direction of (iv) is clear:  if $(X, \Delta, \ba^t)$ is strongly $F$-regular, then by definition the element $1$ is in any non-zero ideal $J$ satisfying
\[
\phi \left(\Frobp{\ba^{\lceil t(p^e - 1) \rceil} J}{e} \right) \subseteq J
\]
for all $e \geq 0$ and all $\phi \in I_{\Delta_X, e}$.  For the converse, notice that for any $d \in R \setminus \{0 \}$, the ideal
\[
\sum_{e \geq 0 }  \sum_{\phi \in I_{\Delta_X, e}} \phi \left(\Frobp{d\ba^{\lceil t(p^e - 1) \rceil}}{e} \right)
\]
contains $\tau_b(X, \Delta, \ba^t)$ by Lemma \ref{Lemma:ExistOfTestElts}.  Thus if $\tau_b(X; \Delta, \ba^t) = R$, since $R$ is local, we must have
$ \phi \left( \Frobp{d\ba^{\lceil t(p^e - 1) \rceil}}{e} \right)  = R$ for some $e \geq 0$ and $\phi \in I_{\Delta_X, e}$, which completes the proof.
\end{proof}

\subsection{Test ideals and finite maps}

Throughout this section, we will be in the following situation.

\begin{setting}
\label{set.TestIdealsSetting}
Let $\pi \: Y \rightarrow X$ be a surjective finite morphism of $F$-finite normal
schemes satisfying (\ref{eqn:FUpperShriekOmegaIsOmega}) and $(X, \Delta_X, \ba^t)$ a triple.  Fix a non-zero
$\O_{X}$-linear homomorphism $\Tt : \pi_* K(Y) \to K(X)$ and set
$\RamiT$ to be the integral divisor on $Y$ corresponding to $\Tt|_{\pi_* \O_Y}$ as
in \eqref{eq:mapstodivisorsgeneral}.
We set
\[
\Delta_{Y/\Tt} = \pi^*\Delta_X - \RamiT
\]
and will consider the triple $(Y, \Delta_{Y/\Tt},  (\ba \cdot \O_{Y})^{t})$.

\end{setting}

Our main goal is to show that
\[ \Tt\left(\tau_b(Y; \Delta_{Y/\Tt}, (\ba \cdot \O_{Y})^{t})\right) = \tau_b(X; \Delta_X, \ba^{t}).\]  This will be done in two steps, and we proceed first with the easier inclusion.

\begin{proposition}
\label{Prop:EasyTestIdealContainment}
Suppose we are in the setting of \ref{set.TestIdealsSetting}.   Then
\[
\Tt\left(\tau_b(Y; \Delta_{Y/\Tt}, (\ba \cdot \O_Y)^t) \right) \supseteq \tau_b(X, \Delta_X, \ba^t).
\]
\end{proposition}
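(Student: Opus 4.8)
The plan is to produce $\tau_b(X;\Delta_X,\ba^t)$ as the image under $\Tt$ of elements obtained by applying transposed maps to a test element pulled up to $Y$. The key input is Theorem~\ref{thm:mainliftingcriterion}: any $\phi \in I_{\Delta_X,e}$ has a $\Tt$-transpose $\phi_{\Tt} \in \Hom_{\O_Y}(\O_Y^{1/p^e},\O_Y)$ because $\Delta_\phi \geq \Delta_X$ forces $\pi^*\Delta_\phi \geq \pi^*\Delta_X \geq \pi^*\Delta_X - \RamiT = \Delta_{Y/\Tt}$, and moreover $\Delta_{\phi_{\Tt}} = \pi^*\Delta_\phi - \RamiT \geq \Delta_{Y/\Tt}$, so $\phi_{\Tt} \in I_{\Delta_{Y/\Tt},e}$ (after adjusting for the ceiling operations, which only help since $\pi^*$ commutes with $\geq$). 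The transpose relation says $\Tt \circ \phi_{\Tt} = \phi \circ \Tt^{1/p^e}$ on the relevant sheaves.

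First I would reduce to the affine case $X = \Spec R$, $Y = \Spec S$, which is legitimate since both sides commute with localization and glue. Pick a non-zero map $\psi \in \Hom_R(F^e_*R,R)$ with $\Delta_\psi \geq \Delta_X$ and, via Lemma~\ref{Lemma:ExistOfTestElts} and Proposition~\ref{prop:ConstructionOfTestIdeal}, a test element $c \in R\setminus\{0\}$ so that $\tau_b(X;\Delta_X,\ba^t) = \sum_{e\geq 0}\sum_{\phi\in I_{\Delta_X,e}} \phi(F^e_*(\ba^{\lceil t(p^e-1)\rceil} c))$. Since $c \in R \subseteq S$ and $\ba \cdot \O_Y = \ba S$, the element $c$ lies inside $\tau_b(Y;\Delta_{Y/\Tt},(\ba S)^t)$ provided $c$ (or a multiple) serves as a test element there — this I would arrange at the outset by choosing $c$ to additionally satisfy the conclusion of Lemma~\ref{Lemma:ExistOfTestElts} for the triple on $Y$ (using Remark~\ref{RemarkExistenceHelp}: shrink to a common open locus where $\pi$ is étale and both rings are regular, then take a high power of a suitable element lying in $\ba$ and cutting out the complement).

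Next, for each $\phi \in I_{\Delta_X,e}$ let $\phi_{\Tt} \in I_{\Delta_{Y/\Tt},e}$ be its $\Tt$-transpose. Applying $\phi_{\Tt}$ to $F^e_*(\ba^{\lceil t(p^e-1)\rceil} c) \subseteq S^{1/p^e}$ lands in $\tau_b(Y;\Delta_{Y/\Tt},(\ba S)^t)$ by the defining property of the (big) test ideal. Now push forward by $\Tt$ and use the commutativity $\Tt(\phi_{\Tt}(F^e_* a)) = \phi(\Tt^{1/p^e}(F^e_* a)) = \phi(F^e_*(\Tt(a)))$ for $a$ in the relevant module. Here I must be slightly careful: I want to hit $\phi(F^e_*(\ba^{\lceil t(p^e-1)\rceil} c))$, so I need $\Tt$ applied to $\ba^{\lceil t(p^e-1)\rceil} c \cdot (\text{stuff})$ to sweep out all of $\ba^{\lceil t(p^e-1)\rceil} c$; this follows because $\Tt: S \to K(X)$ restricted to $\ba^{\lceil t(p^e-1)\rceil} c \cdot S$ surjects onto an $R$-submodule containing $\ba^{\lceil t(p^e-1)\rceil} c$ (as $1 \in S$), and since $\Tt(S) \subseteq \O_X$ by construction of $\RamiT$, everything stays in $R$. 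Summing over $e$ and all $\phi$ recovers exactly the expression for $\tau_b(X;\Delta_X,\ba^t)$ from Proposition~\ref{prop:ConstructionOfTestIdeal}, giving $\Tt(\tau_b(Y;\Delta_{Y/\Tt},(\ba S)^t)) \supseteq \tau_b(X;\Delta_X,\ba^t)$.

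**The main obstacle** I anticipate is the bookkeeping around the ideal $\ba$ and the ceiling $\lceil t(p^e-1)\rceil$ under $\Tt$: one must verify that $\Tt$ carries $\ba^{\lceil t(p^e-1)\rceil}\O_Y$ into $\ba^{\lceil t(p^e-1)\rceil}\O_X$ (true since $\Tt$ is $\O_X$-linear and $\ba^n S = (\ba^n\O_X)S$, so $\Tt(\ba^n S) = \ba^n \Tt(S) \subseteq \ba^n \O_X$) and, more delicately, that the transpose $\phi_{\Tt}$ genuinely lies in $I_{\Delta_{Y/\Tt},e}$ rather than merely having $\Delta_{\phi_{\Tt}} \geq \Delta_{Y/\Tt}$ as $\bQ$-divisors — i.e.\ that the integral twist $\lceil (p^e-1)\Delta_{Y/\Tt}\rceil$ is dominated appropriately. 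Since $\Delta_{\phi_{\Tt}} = \pi^*\Delta_\phi - \RamiT$ with $\RamiT$ integral, and $\Delta_\phi \geq \Delta_X$ implies $\pi^*\Delta_\phi \geq \pi^*\Delta_X$, we get $\Delta_{\phi_{\Tt}} \geq \pi^*\Delta_X - \RamiT = \Delta_{Y/\Tt}$, which is precisely the membership condition in $I_{\Delta_{Y/\Tt},e}$ as phrased in Definition~\ref{DefnIDeltaJDelta}; so this reduces to the cited lifting theorem. The remaining inclusion (the reverse one, harder) is explicitly deferred, so for this proposition the argument is essentially a clean assembly of Theorem~\ref{thm:mainliftingcriterion}, the trace/transpose commutativity, and the explicit construction of test ideals in Proposition~\ref{prop:ConstructionOfTestIdeal}.
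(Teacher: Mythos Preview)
Your strategy shares the key ingredient with the paper---use Theorem~\ref{thm:mainliftingcriterion} to produce $\Tt$-transposes $\phi_{\Tt}\in I_{\Delta_{Y/\Tt},e}$ and exploit the commutativity $\Tt\circ\phi_{\Tt}=\phi\circ\Tt^{1/p^e}$---but the way you assemble the conclusion differs, and your version contains a genuine gap.

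The paper does \emph{not} attempt to produce each element of $\tau_b(X;\Delta_X,\ba^t)$ explicitly as an image under $\Tt$.  Instead, after reducing to $\Tt(S)\subseteq R$ and to effective $\Delta_X,\Delta_{Y/\Tt}$ (via Lemma~\ref{Lemma:SkodaTypeTheorem}), it simply verifies that the nonzero ideal $J:=\Tt(\tau_b(Y;\Delta_{Y/\Tt},(\ba\O_Y)^t))\subseteq R$ satisfies
\[
\phi\bigl((\ba^{\lceil t(p^e-1)\rceil} J)^{1/p^e}\bigr)\subseteq J
\]
for every $\phi\in I_{\Delta_X,e}$ and every $e\geq 0$, whence $J\supseteq\tau_b(X;\Delta_X,\ba^t)$ by the defining minimality in Definition~\ref{DefnBigTestIdeal}.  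This sidesteps any question about what $\Tt$ does to $1$.

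By contrast, you try to realize each summand $\phi\bigl((\ba^{\lceil t(p^e-1)\rceil}c)^{1/p^e}\bigr)$ in the image, and your justification ``$\Tt$ restricted to $\ba^{\lceil t(p^e-1)\rceil}c\cdot S$ surjects onto an $R$-submodule containing $\ba^{\lceil t(p^e-1)\rceil}c$ (as $1\in S$)'' is false: $\Tt$ is only $R$-linear, so $\Tt(\ba^n c\cdot 1)=\ba^n c\cdot\Tt(1)$, and $\Tt(1)$ is typically not a unit (indeed, the failure of $\Tt$ to be surjective is the whole point of this circle of ideas).  What you actually obtain is $\phi\bigl((\ba^n c\,\Tt(S))^{1/p^e}\bigr)\subseteq\Tt(\tau_b(Y))$.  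This is salvageable---fix a nonzero $d\in\Tt(S)\cap R$ and note $cd$ is again a test element for $X$, so summing $\phi\bigl((\ba^n cd)^{1/p^e}\bigr)$ still yields $\tau_b(X;\Delta_X,\ba^t)$---but the argument as written does not go through.

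Two further omissions: you assert ``$\Tt(S)\subseteq\O_X$ by construction of $\RamiT$,'' but $\RamiT$ need not be effective in Setting~\ref{set.TestIdealsSetting}, so this requires the twisting reduction the paper performs at the outset; and the transpose $\phi_{\Tt}$ only exists when $\pi^*\Delta_\phi\geq\RamiT$, which is guaranteed only after one has reduced to $\Delta_{Y/\Tt}\geq 0$.
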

\begin{proof}
The statement is local on $X$ and so we assume that $X = \Spec R$ where $R$ is a local ring and $Y = \Spec S$ for a semi-local ring $S$.  By restricting the target, we may view $\Tt$ as a map $\Tt : S \to R(E)$ for some effective Cartier divisor $E = \Div_X(g)$.   Composing with the isomorphism ${1 \over g} R \cong R$, we have a map $\Tt' : S \to R$.  Notice that $\RamiT = \mathcal{R}_{\Tt'} - \pi^* E$.  Thus
\[
\begin{array}{lll}
\Tt\left(\tau_b(Y; \Delta_{Y/\Tt}, (\ba \cdot \O_Y)^t) \right) & = & {1 \over g} \Tt'\left(  \tau_b(Y; \Delta_{Y/\Tt}, (\ba \cdot \O_Y)^t) \right) \\
 & = & \Tt'\left(  \tau_b(Y; \Delta_{Y/\Tt} - \pi^*E, (\ba \cdot \O_Y)^t) \right) \\
 & = & \Tt'\left(  \tau_b(Y; \Delta_{Y/\Tt'}, (\ba \cdot \O_Y)^t) \right)
 \end{array}
\]
where the penultimate equality uses Lemma \ref{Lemma:SkodaTypeTheorem}.  Thus we may assume that $\Tt = \Tt'$ and view $\Tt \in \Hom_R(S, R)$.
Likewise, it is sufficient to show that $f \cdot {\Tt}(\tau_b(Y;
\Delta_{Y/\Tt}, (\ba \cdot \O_Y)^t) ) \supseteq f \cdot \tau_b(X, \Delta_X, \ba^t)$ for some $f \in R\setminus \{0\}$.  Thus, again using Lemma \ref{Lemma:SkodaTypeTheorem}, we may assume that $\Delta_{Y/\Tt}$ and $\Delta_X$ are effective divisors.  Also note that, because we may localize at an element of $R \,\,  \cap \, \, \tau_b(Y; \Delta_{Y/\Tt}, (\ba \cdot \O_Y)^t)$,  we know $\Tt(\tau_b(Y; \Delta_{Y/\Tt}, (\ba \cdot \O_Y)^t)) \neq 0$.

Now, for every $e \geq 0$ and every map $\phi \in I_{\Delta_X, e} \subseteq \Hom_R(\Frob{R}{e}, R)$, $\phi$ has a $\Tt$-transpose ${\phi}_{\Tt}$ in $I_{\Delta_{Y/\Tt}, e} \subseteq \Hom_S(\Frob{S}{e}, S)$.  Also note that
\[
\begin{array}{rcl}
\Tt\Bigg({\phi}_{\Tt}\left(\FrobP{\ba^{\lceil t(p^e - 1) \rceil} \tau_b(Y; \Delta_{Y/\Tt}, (\ba \cdot \O_Y)^t)}{e}\right) \Bigg) &\subseteq& \Tt\left(\tau_b(Y; \Delta_{Y/\Tt}, (\ba \cdot \O_Y)^t)\right) \, \, .
\end{array}
\]
But since $\Tt \, \circ \, {\phi}_{\Tt} = \phi \,  \circ \, \Frob{\Tt}{e}$ by assumption, we obtain
\[
\begin{array}{l}
\phi \left(\Frobp{\ba^{\lceil t(p^e - 1) \rceil}  \Tt \left(\tau_b(Y;
      \Delta_{Y/\Tt}, (\ba \cdot \O_Y)^t)\right)}{e}\right) \\ \quad \qquad
\begin{array}{cl}
 =& \phi \left(\Tt^{1/p^e} \Frob{\left(\ba^{\lceil t(p^e - 1) \rceil} \tau_b(Y; \Delta_{Y/\Tt}, (\ba \cdot \O_Y)^t)\right)}{e}\right) \\
=& \Tt\left({\phi}_{\Tt}\left(\FrobP{\ba^{\lceil t(p^e - 1) \rceil} \tau_b(Y; \Delta_{Y/\Tt}, (\ba \cdot \O_Y)^t)}{e}\right) \right) \\
\subseteq& \Tt(\tau_b(Y; \Delta_{Y/\Tt}, (\ba \cdot \O_Y)^t)).
\end{array}
\end{array}
\]
This proves the Proposition since $\tau_b(X; \Delta_X, \ba^t)$ is the smallest non-zero ideal satisfying the same condition.
\end{proof}

We now give two proofs of the reverse inclusion. In the first, we provide an easy argument which is relatively intuitive but works only in the log $\bQ$-Gorenstein case where the index is not divisible by $p$.  The proof in the general case will follow immediately thereafter.

\begin{proposition}
\label{Prop:HardContainmentForQGorenstein}
Assume we are in the setting of \ref{set.TestIdealsSetting}.  Further suppose that $K_X + \Delta_X$ is $\bQ$-Cartier with index not divisible by $p$. Then
\[
\Tt\left(\tau_b(Y; \Delta_{Y/\Tt}, (\ba \cdot \O_Y)^t) \right) \subseteq \tau_b(X, \Delta_X, \ba^t).
\]
\end{proposition}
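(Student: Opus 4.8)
The plan is to deduce the reverse inclusion from the existence and divisor formula for $\Tt$-transposes (Theorem~\ref{thm:mainliftingcriterion}) together with the simplified description of the test ideal in the $\bQ$-Gorenstein case (Lemma~\ref{lem:SimpleTestIdeal}), by exhibiting a single ideal of $S$ that simultaneously contains $\tau_b(Y;\Delta_{Y/\Tt},(\ba\cdot\O_Y)^t)$ and is carried into $\tau_b(X;\Delta_X,\ba^t)$ by $\Tt$. As in the proof of Proposition~\ref{Prop:EasyTestIdealContainment}, the statement is local on $X$, so I would assume $X=\Spec R$ with $R$ local and $Y=\Spec S$ with $S$ semi-local, and then, using Lemma~\ref{Lemma:SkodaTypeTheorem} exactly as in that proof, reduce to the case where $\Tt$ restricts to a map $\Tt\:S\to R$ and both $\Delta_X$ and $\Delta_{Y/\Tt}$ are effective (the $\bQ$-Cartier hypothesis with index prime to $p$ is preserved, since twisting by a principal divisor only shrinks the index). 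Since $K_X+\Delta_X$ is $\bQ$-Cartier with index $m$ not divisible by $p$, I would pick $e_0\geq 1$ with $m\mid(p^{e_0}-1)$, so that $(1-p^{e_0})(K_X+\Delta_X)$ is Cartier, hence principal on the local scheme $X$; by the correspondence \eqref{eq:divmapcorr} there is then a nonzero $\psi\in\Hom_R(\Frob{R}{e_0},R)$ with $\Delta_\psi=\Delta_X$. Because $\Delta_{Y/\Tt}=\pi^{*}\Delta_X-\RamiT\geq 0$ says precisely that $\pi^{*}\Delta_\psi\geq\RamiT$, Theorem~\ref{thm:mainliftingcriterion} yields a $\Tt$-transpose $\psi_{\Tt}\in\Hom_S(\Frob{S}{e_0},S)$ with $\Delta_{\psi_{\Tt}}=\pi^{*}\Delta_X-\RamiT=\Delta_{Y/\Tt}$.

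The next ingredient is the iterated commutativity $\Tt\circ(\psi_{\Tt})^{n}=\psi^{n}\circ\Frob{\Tt}{ne_0}$ for all $n\geq 0$, where $(\psi_{\Tt})^{n}$ and $\psi^{n}$ are the Frobenius iterates of \eqref{eqn:ComposeMapWithSelf}. This I would prove by induction: the case $n=1$ is the defining property of the $\Tt$-transpose, and the step from $n$ to $n+1$ follows by writing $(\psi_{\Tt})^{n+1}=(\psi_{\Tt})^{n}\circ F^{ne_0}_{*}(\psi_{\Tt})$, invoking the induction hypothesis, and using that $F^{ne_0}_{*}$ preserves composition of additive maps together with $\Tt\circ\psi_{\Tt}=\psi\circ\Frob{\Tt}{e_0}$. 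Now set
\[
J:=\{\,s\in S \mid \Tt(sS)\subseteq\tau_b(X;\Delta_X,\ba^t)\,\},
\]
which is an ideal of $S$ (as $\Tt$ is $R$-linear and $\tau_b(X;\Delta_X,\ba^t)$ is an ideal of $R$), is nonzero (it contains $cS$ for any $0\neq c\in\tau_b(X;\Delta_X,\ba^t)$), and by construction satisfies $\Tt(J)\subseteq\tau_b(X;\Delta_X,\ba^t)$. The heart of the argument is to check that $J$ satisfies the defining condition of $\tau_b(Y;\Delta_{Y/\Tt},(\ba\cdot\O_Y)^t)$ in the form given by Lemma~\ref{lem:SimpleTestIdeal} (applicable since $\Delta_{\psi_{\Tt}}=\Delta_{Y/\Tt}\geq 0$), namely $(\psi_{\Tt})^{n}\bigl(\FrobP{(\ba\O_Y)^{\lceil t(p^{ne_0}-1)\rceil}J}{ne_0}\bigr)\subseteq J$ for all $n$. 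Given $a\in(\ba\O_Y)^{\lceil t(p^{ne_0}-1)\rceil}=\ba^{\lceil t(p^{ne_0}-1)\rceil}S$, $j\in J$, and $s'\in S$, I would use $p^{-ne_0}$-linearity to write $s'\cdot(\psi_{\Tt})^{n}\bigl((aj)^{1/p^{ne_0}}\bigr)=(\psi_{\Tt})^{n}\bigl(((s')^{p^{ne_0}}aj)^{1/p^{ne_0}}\bigr)$, then apply $\Tt$ and the iterated commutativity to rewrite this as $\psi^{n}\bigl((\Tt((s')^{p^{ne_0}}aj))^{1/p^{ne_0}}\bigr)$; decomposing $a=\sum a_i s_i$ with $a_i\in\ba^{\lceil t(p^{ne_0}-1)\rceil}\subseteq R$ and $s_i\in S$ and using $R$-linearity of $\Tt$ together with $j\in J$ gives $\Tt((s')^{p^{ne_0}}aj)=\sum a_i\,\Tt((s')^{p^{ne_0}}s_i j)\in\ba^{\lceil t(p^{ne_0}-1)\rceil}\cdot\tau_b(X;\Delta_X,\ba^t)$, so the expression lands in $\psi^{n}\bigl(\FrobP{\ba^{\lceil t(p^{ne_0}-1)\rceil}\tau_b(X;\Delta_X,\ba^t)}{ne_0}\bigr)\subseteq\tau_b(X;\Delta_X,\ba^t)$ by Lemma~\ref{lem:SimpleTestIdeal} applied on $X$. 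Since this holds for every $s'\in S$, we get $(\psi_{\Tt})^{n}\bigl((aj)^{1/p^{ne_0}}\bigr)\in J$, hence $J$ satisfies the required condition; as $\tau_b(Y;\Delta_{Y/\Tt},(\ba\cdot\O_Y)^t)$ is the smallest such ideal (Lemma~\ref{lem:SimpleTestIdeal} on $Y$), we conclude $\tau_b(Y;\Delta_{Y/\Tt},(\ba\cdot\O_Y)^t)\subseteq J$, and applying $\Tt$ finishes the proof.

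I expect the main obstacle to be organizational rather than conceptual: keeping the iterated commutativity identity straight across the repeated Frobenius twists, and correctly commuting the auxiliary ideal $\ba$ past $\Tt$ (which forces the decomposition $a=\sum a_i s_i$ with coefficients taken inside $\ba^{\lceil\cdots\rceil}\subseteq R$). The genuinely substantive input — that $\psi$ admits a transpose $\psi_{\Tt}$ with $\Delta_{\psi_{\Tt}}=\Delta_{Y/\Tt}$ and that $\Tt$ commutes with transposes — is already packaged in Theorem~\ref{thm:mainliftingcriterion} and the definition of the $\Tt$-transpose, so once the right ideal $J$ is isolated the remaining verification is a routine manipulation of the smallest-ideal characterization of test ideals.
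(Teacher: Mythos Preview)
Your argument is correct and shares the same key ingredients as the paper's proof---the localization, the Skoda-type reduction to effective divisors, the construction of a single $\psi$ with $\Delta_\psi=\Delta_X$, its $\Tt$-transpose $\psi_{\Tt}$ with $\Delta_{\psi_{\Tt}}=\Delta_{Y/\Tt}$, and the iterated commutativity $\Tt\circ(\psi_{\Tt})^n=\psi^n\circ\Tt^{1/p^{ne_0}}$---but the final step is organized differently. The paper proceeds \emph{forward}: it chooses a single element $c\in R\setminus\{0\}$ satisfying Lemma~\ref{Lemma:ExistOfTestElts} simultaneously for both triples (via Remark~\ref{RemarkExistenceHelp}), writes $\tau_b(Y;\Delta_{Y/\Tt},(\ba\O_Y)^t)=\sum_{n\geq 0}(\psi_{\Tt})^n\bigl((c\,\ba^{\lceil t(p^{ne_0}-1)\rceil}S)^{1/p^{ne_0}}\bigr)$ explicitly, applies $\Tt$, commutes it through, and uses $\Tt(S)\subseteq R$ to land inside $\sum_{n}\psi^n\bigl((c\,\ba^{\lceil t(p^{ne_0}-1)\rceil})^{1/p^{ne_0}}\bigr)=\tau_b(X;\Delta_X,\ba^t)$. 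You instead proceed \emph{backward}: you form the transporter-type ideal $J=\{s\in S:\Tt(sS)\subseteq\tau_b(X;\Delta_X,\ba^t)\}$, verify it is nonzero and $(\psi_{\Tt})^n$-stable, and then invoke the minimality in Lemma~\ref{lem:SimpleTestIdeal}. Your route has the minor advantage that it never needs a common test element working on both $X$ and $Y$; the paper's route is a couple of lines shorter and makes the computation entirely explicit. Either way the substantive input is identical, and your induction for the iterated commutativity and your handling of $\ba$ via the decomposition $a=\sum a_i s_i$ with $a_i\in R$ are both fine.
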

\begin{proof}
Again, the statement is local on $X$ so we assume $X = \Spec R$ for a local ring $R$ and that $Y = \Spec S$ is the spectrum of a semi-local ring $S$.
By hypothesis, there exists a \mbox{$p^{-e}$-linear} map $\phi \: \Frob{\O_X}{e} \rightarrow \O_X$ corresponding to $\Delta_X$ for some sufficiently large $e$.  As in the proof of the previous proposition, by using Lemma \ref{Lemma:SkodaTypeTheorem}, we may assume that $\Delta_{Y/\Tt}$, $\Delta_X$ and $\RamiT$ are effective divisors.  Set ${\phi}_{\Tt}$ to be the $\Tt$-transpose of $\phi$.

Now choose $c \in R\setminus \{0\}$ satisfying the condition of Lemma \ref{Lemma:ExistOfTestElts} for both affine triples $(X = \Spec R, \Delta_X, \ba^t)$ and $(Y = \Spec S, \Delta_{Y/\Tt}, (\ba \cdot \O_Y)^t )$.
Such an element exists by Remark~\ref{RemarkExistenceHelp}.

Again let $\phi^{n} \: \Frob{R}{ne} \to R$ and $\left( {\phi_{\Tt}}
\right)^n \: \Frob{S}{ne} \rightarrow S$ denote the $n$-th Frobenius iterates of $\phi$ and $\phi_{\Tt}$, respectively, as in (\ref{eqn:ComposeMapWithSelf}).  Then
\[
\begin{array}{rcl}
 \Tt \left(\tau_b(Y; \Delta_{Y/\Tt}, (\ba \cdot \O_Y)^t) \right)
&=& \Tt \left( \sum_{n \geq 0} \left( {\phi_{\Tt}} \right)^n \left( \Frobp{c \ba^{\lceil t(p^{ne} - 1) \rceil} S}{ne} \right) \right)\smallskip \\
&=& \sum_{n \geq 0} \phi^n \left( \Tt^{1/p^{ne}} \left( \Frobp{c \ba^{\lceil t(p^{ne} - 1) \rceil} S}{ne} \right) \right) \smallskip \\
&\subseteq& \sum_{n \geq 0} \phi^n \left(  \Frobp{c \ba^{\lceil t(p^{ne} - 1) \rceil} R}{ne} \right) \smallskip \\
&=&\tau_b(X, \Delta_X, \ba^t).
\end{array}
\]
Here the second line is due to  Corollary \ref{cor:trcommutes} and the third is deduced from the fact that $\Tt(S) \subseteq R$ and also that $\Tt$ is $R$-linear.
\end{proof}

We now prove the above inclusion in the general case (that is, without the log $\bQ$-Gorenstein assumption).  The proof strategy has similarities to \cite[Proposition 1.12]{HaraYoshidaGeneralizationOfTightClosure}.  First we need a lemma which says, roughly, that if we pick our ``test elements'' carefully, we can use round-down instead of round-up in the construction of the test ideal.

\begin{lemma}
\label{lem:rounddownTestIdeal}
Suppose we are in the setting of \ref{set.TestIdealsSetting} and further assume that $\Delta_X$ and $\Delta_{Y/\Tt}$ are effective and that $X = \Spec R$ and $Y = \Spec S$.  Fix an element $c \in R \setminus \{ 0 \}$ that satisfies Lemma \ref{Lemma:ExistOfTestElts} for both $(X, \Delta_X, \ba^t)$ and also for $(Y, \Delta_{Y/\Tt}, (\ba \cdot \O_Y)^t)$.  Choose $c' \in R\setminus \{0\}$ such that $\Div_X(c') \geq \Delta_X$ and $\Div_Y(c') \geq \Delta_{Y/\Tt}$.   Then we have
\begin{equation}
\label{eqn:AlternateTestIdealDesc1}
\tau_b(X; \Delta_X, \ba^t) = \sum_{e \geq 0} \left( \sum_{\phi \in J_{\Delta_X, e}} \phi\left(\Frobp{(cc') \ba^{\lceil t(p^e - 1) \rceil} }{e}\right) \right).
\end{equation}  Likewise, we also have
\begin{equation}
\label{eqn:AlternateTestIdealDesc2}
\tau_b(Y; \Delta_{Y/\Tt}, (\ba \cdot \O_Y)^t) = \sum_{e \geq 0} \left( \sum_{\phi \in J_{\Delta_{Y/\Tt}, e}} \phi\left(\Frobp{(cc') (\ba \cdot \O_{Y})^{\lceil t(p^e - 1) \rceil} }{e} \right) \right).
\end{equation}
\end{lemma}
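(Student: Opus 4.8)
The plan is to prove each of the two displayed equalities \eqref{eqn:AlternateTestIdealDesc1} and \eqref{eqn:AlternateTestIdealDesc2} by a pair of inclusions. Since the argument for \eqref{eqn:AlternateTestIdealDesc2} is word-for-word the same as that for \eqref{eqn:AlternateTestIdealDesc1} after replacing the data $(X,\Delta_X,\ba,\tau_b(X;\Delta_X,\ba^t))$ by $(Y,\Delta_{Y/\Tt},\ba\cdot\O_Y,\tau_b(Y;\Delta_{Y/\Tt},(\ba\cdot\O_Y)^t))$ --- using that $c'\in R\subseteq S$ so that $\Div_Y(c')$ is again an \emph{integral} divisor, and that $c$ is by hypothesis a valid test element in the sense of Lemma \ref{Lemma:ExistOfTestElts} for $(Y,\Delta_{Y/\Tt},(\ba\cdot\O_Y)^t)$ as well --- I will only spell out \eqref{eqn:AlternateTestIdealDesc1}. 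Abbreviate $\tau:=\tau_b(X;\Delta_X,\ba^t)$ and let $J'$ denote the right-hand side of \eqref{eqn:AlternateTestIdealDesc1}; note $J'$ is an ideal, and it contains $\tau\neq 0$ once the first inclusion below is established.

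For $\tau\subseteq J'$: by the remark following Lemma \ref{Lemma:ExistOfTestElts}, the product $cc'$ again satisfies the conclusion of that lemma for $(X,\Delta_X,\ba^t)$, so Proposition \ref{prop:ConstructionOfTestIdeal} applies with $cc'$ as the chosen test element and gives $\tau=\sum_{e\geq 0}\sum_{\phi\in I_{\Delta_X,e}}\phi\!\left(\Frobp{\ba^{\lceil t(p^e-1)\rceil}(cc')}{e}\right)$. Since $\tfrac{1}{p^e-1}\lfloor(p^e-1)\Delta_X\rfloor\leq\Delta_X$, every $\phi\in I_{\Delta_X,e}$ satisfies $\Delta_\phi\geq\Delta_X\geq\tfrac{1}{p^e-1}\lfloor(p^e-1)\Delta_X\rfloor$, i.e.\ $I_{\Delta_X,e}\subseteq J_{\Delta_X,e}$ for all $e\geq 0$; hence $\tau\subseteq J'$.

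For $J'\subseteq\tau$: the key observation is that for any $\phi\in J_{\Delta_X,e}$ the map $\psi(\blank):=\phi\!\left(\Frobp{c'}{e}\cdot\blank\right)$ lies in $I_{\Delta_X,e}$. Indeed, $\psi$ is precisely $\Frobp{c'}{e}\cdot\phi$ inside the $R^{1/p^e}$-module $\Hom_R(R^{1/p^e},R)$, so it is a nonzero $p^{-e}$-linear map (the module is torsion-free over $R^{1/p^e}$, being a submodule of the one-dimensional $K^{1/p^e}$-space $\Hom_K(K^{1/p^e},K)$), and under \eqref{eq:divmapcorr} it corresponds to $\Delta_\psi=\Delta_\phi+\tfrac{1}{p^e-1}\Div_X(c')$. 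Fix a prime divisor $D$ and set $a:=\ord_D\Delta_X$. Because $\Div_X(c')$ is integral with $\Div_X(c')\geq\Delta_X$, we have $\ord_D\Div_X(c')\geq\lceil a\rceil$; combining this with $\ord_D\Delta_\phi\geq\tfrac{1}{p^e-1}\lfloor(p^e-1)a\rfloor$ (from $\phi\in J_{\Delta_X,e}$) and the elementary inequality $\lfloor(p^e-1)a\rfloor+\lceil a\rceil\geq(p^e-1)a$ --- trivial when $a=0$, and when $a>0$ valid because $\lceil a\rceil\geq 1>\{(p^e-1)a\}$ --- yields $\ord_D\Delta_\psi\geq a$. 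Hence $\Delta_\psi\geq\Delta_X$, so $\psi\in I_{\Delta_X,e}$. Now for $\phi\in J_{\Delta_X,e}$ we compute $\phi\!\left(\Frobp{(cc')\ba^{\lceil t(p^e-1)\rceil}}{e}\right)=\psi\!\left(\Frobp{c\,\ba^{\lceil t(p^e-1)\rceil}}{e}\right)\subseteq\psi\!\left(\Frobp{\ba^{\lceil t(p^e-1)\rceil}\tau}{e}\right)\subseteq\tau$, where the middle inclusion uses $c\in\tau$ and the last uses that $\tau$ satisfies the defining condition of Definition \ref{DefnBigTestIdeal} relative to $\psi\in I_{\Delta_X,e}$. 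Summing over all $e\geq 0$ and all $\phi\in J_{\Delta_X,e}$ gives $J'\subseteq\tau$, which completes \eqref{eqn:AlternateTestIdealDesc1}; as noted, \eqref{eqn:AlternateTestIdealDesc2} follows in exactly the same way.

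I expect the only genuinely non-formal step to be the divisor bound $\Delta_\psi\geq\Delta_X$: the point is that the integrality of $\Div_X(c')$ forces its coefficients up to at least $\lceil\,\cdot\,\rceil$ of the coefficients of $\Delta_X$, and this surplus precisely absorbs the loss incurred by using $\lfloor(p^e-1)\Delta_X\rfloor$ rather than $\lceil(p^e-1)\Delta_X\rceil$ in the definition of $J_{\Delta_X,e}$. The remaining ingredients --- that $cc'$ is still a valid test element, that $\psi$ stays a nonzero $p^{-e}$-linear map with associated divisor $\Delta_\phi+\tfrac{1}{p^e-1}\Div_X(c')$, and that precomposition by $\Frobp{c'}{e}$ realizes the $R^{1/p^e}$-module multiplication appearing in \eqref{eq:divmapcorr} --- are routine bookkeeping.
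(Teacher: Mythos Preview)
Your proof is correct and follows essentially the same approach as the paper's: both arguments sandwich the $J$-sum between two copies of $\tau_b$ by using that $cc'$ is again a valid test element (giving $\tau_b \subseteq J'$ via $I_{\Delta_X,e}\subseteq J_{\Delta_X,e}$) and that premultiplication by $\Frobp{c'}{e}$ carries $J_{\Delta_X,e}$ into $I_{\Delta_X,e}$ (giving $J'\subseteq \tau_b$). The paper compresses this into a single displayed chain of inclusions and leaves the key containment $\Frobp{c'}{e}\cdot J_{\Delta_X,e}\subseteq I_{\Delta_X,e}$ implicit, whereas you supply the explicit divisor check $\lfloor (p^e-1)a\rfloor + \lceil a\rceil \geq (p^e-1)a$ that justifies it; this is exactly the point the paper is silently using.
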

\begin{remark} The difference between these statements and Proposition
  \ref{prop:ConstructionOfTestIdeal} is that here we sum over $\phi \in J_{\Delta, e}$ instead of $\phi \in I_{\Delta, e}$ (see Definition \ref{DefnIDeltaJDelta}).
\end{remark}

\begin{proof}
Simply observe that
\[
\begin{array}{rcl}
\tau_b(X; \Delta_X, \ba^t) &=& \sum_{e \geq 0} \left( \sum_{\phi\in I_{\Delta_X, e} } \phi\left(\Frobp{cc' \ba^{\lceil t(p^e - 1) \rceil}}{e} \right) \right) \smallskip \\
&\subseteq& \sum_{e \geq 0} \left( \sum_{\phi \in J_{\Delta_X, e}} \phi\left(\Frobp{cc' \ba^{\lceil t(p^e - 1) \rceil} }{e} \right) \right) \smallskip \\
&=& \sum_{e \geq 0} \left( \sum_{\phi \in (\Frob{c'}{e} J_{\Delta_X, e})} \phi\left(\Frobp{c \ba^{\lceil t(p^e - 1) \rceil} }{e}\right) \right) \smallskip \\
&\subseteq& \sum_{e \geq 0} \left( \sum_{\phi\in I_{\Delta_X, e} } \phi\left(\Frobp{c \ba^{\lceil t(p^e - 1) \rceil} }{e}\right) \right) \smallskip \\
&=& \tau_b(X; \Delta_X, \ba^t).
\end{array}
\]
This completes the proof of the first statement.  The second follows in similar fashion.
\end{proof}

\begin{proposition}
\label{Prop:HardTestIdealContainment}
Suppose we are in the setting of \ref{set.TestIdealsSetting}.   Then
\[
\Tt\left(\tau_b(Y; \Delta_{Y/\Tt}, (\ba \cdot \O_Y)^t) \right) \subseteq \tau_b(X, \Delta_X, \ba^t).
\]
\end{proposition}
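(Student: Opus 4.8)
The plan is to combine the reduction already used in Propositions \ref{Prop:EasyTestIdealContainment} and \ref{Prop:HardContainmentForQGorenstein} with the round--down description of the test ideal in Lemma \ref{lem:rounddownTestIdeal} and one new extension--type claim. First I would reduce, exactly as in those proofs, to the case $X=\Spec R$ with $R$ local, $Y=\Spec S$ with $S$ semilocal, $\Tt\in\Hom_R(S,R)$ (so that $\RamiT$ is effective, after restricting the target of $\Tt$ and applying Lemma \ref{Lemma:SkodaTypeTheorem}), and both $\Delta_X$ and $\Delta_{Y/\Tt}=\pi^{*}\Delta_X-\RamiT$ effective. Fix $c\in R\setminus\{0\}$ satisfying Lemma \ref{Lemma:ExistOfTestElts} for both $(X,\Delta_X,\ba^t)$ and $(Y,\Delta_{Y/\Tt},(\ba\O_Y)^t)$ (possible by Remark \ref{RemarkExistenceHelp}) and $c'\in R\setminus\{0\}$ with $\Div_X(c')\ge\Delta_X$; then automatically $\Div_Y(c')=\pi^{*}\Div_X(c')\ge\pi^{*}\Delta_X\ge\Delta_{Y/\Tt}$. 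By Lemma \ref{lem:rounddownTestIdeal},
\[
\tau_b(Y;\Delta_{Y/\Tt},(\ba\O_Y)^t)=\sum_{e\ge 0}\ \sum_{\psi\in J_{\Delta_{Y/\Tt},e}}\psi\!\left(\FrobP{cc'\,(\ba\O_Y)^{\lceil t(p^e-1)\rceil}}{e}\right),
\]
and likewise $\tau_b(X;\Delta_X,\ba^t)=\sum_{e\ge 0}\sum_{\phi\in J_{\Delta_X,e}}\phi\!\left(\FrobP{cc'\,\ba^{\lceil t(p^e-1)\rceil}}{e}\right)$. Since $(\ba\O_Y)^{\lceil t(p^e-1)\rceil}=\ba^{\lceil t(p^e-1)\rceil}S$ and $u\mapsto u^{1/p^e}$ is additive, it suffices to show, for each $e$, each $\psi\in J_{\Delta_{Y/\Tt},e}$, each $\alpha\in\ba^{\lceil t(p^e-1)\rceil}\subseteq R$ and each $a\in S$, that $\Tt\!\left(\psi\!\left((cc'\alpha a)^{1/p^e}\right)\right)\in\tau_b(X;\Delta_X,\ba^t)$.

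The key step pushes this through a map downstairs. Writing $(cc'\alpha a)^{1/p^e}=(cc'\alpha)^{1/p^e}\cdot a^{1/p^e}$ with $cc'\alpha\in R$ and $a\in S$, set $\psi_a:=\psi(a^{1/p^e}\cdot\blank)\in\Hom_S(\Frob{S}{e},S)$; since $\Delta_{\psi_a}=\Delta_\psi+\tfrac{1}{p^e-1}\Div_Y(a)\ge\Delta_\psi$, still $\psi_a\in J_{\Delta_{Y/\Tt},e}$. Then $\psi\!\left((cc'\alpha a)^{1/p^e}\right)=\psi_a\!\left((cc'\alpha)^{1/p^e}\right)$, and as $(cc'\alpha)^{1/p^e}\in\Frob{R}{e}\subseteq\Frob{S}{e}$,
\[
\Tt\!\left(\psi\!\left((cc'\alpha a)^{1/p^e}\right)\right)=\phi_a\!\left((cc'\alpha)^{1/p^e}\right),\qquad \phi_a:=\left(\Tt\circ\psi_a\right)\big|_{\Frob{R}{e}}\in\Hom_R(\Frob{R}{e},R).
\]
The heart of the matter is then the following claim, which I would isolate as a lemma: \emph{if $\psi\in J_{\Delta_{Y/\Tt},e}$, then $\phi:=(\Tt\circ\psi)|_{\Frob{R}{e}}$ lies in $J_{\Delta_X,e}$.} Granting this, $\phi_a\in J_{\Delta_X,e}$, and since $cc'\alpha\in cc'\,\ba^{\lceil t(p^e-1)\rceil}$ we get $\phi_a\!\left((cc'\alpha)^{1/p^e}\right)\in\phi_a\!\left(\FrobP{cc'\,\ba^{\lceil t(p^e-1)\rceil}}{e}\right)\subseteq\tau_b(X;\Delta_X,\ba^t)$ by the displayed formula for $\tau_b(X;\Delta_X,\ba^t)$; summing over all indices yields the proposition.

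It remains to prove the claim, and this I expect to be the main obstacle. Unwinding Definition \ref{DefnIDeltaJDelta}, $\phi\in J_{\Delta_X,e}$ means $\phi(h^{1/p^e})=\Tt(\psi(h^{1/p^e}))\in R$ for every $h$ with $\Div_X(h)\ge-\lfloor(p^e-1)\Delta_X\rfloor$, while $\psi\in J_{\Delta_{Y/\Tt},e}$ means $\psi$ carries every $g$ with $\Div_Y(g)\ge-\lfloor(p^e-1)\Delta_{Y/\Tt}\rfloor$ into $S$ (generic extensions understood). Because $\Tt$ corresponds to the effective divisor $\RamiT$ via \eqref{eq:mapstodivisors} of Section \ref{sec:CanModDuality} --- concretely $\Tt=s_0\Phi$ for a generator $\Phi$ of $\Hom_R(S,R)$ and $s_0\in S$ with $\Div_Y(s_0)=\RamiT$ --- one has $\Tt(S(\RamiT))\subseteq R$, so it is enough to check $\Div_Y(\psi(h^{1/p^e}))\ge-\RamiT$ for such $h$; this can be verified at each prime divisor $E$ of $Y$. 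Let $D$ be the image of $E$, $r_E$ the ramification index, and $\delta_E=\ord_E(\RamiT)$. The correspondence of Section \ref{sec:CanModDuality} gives, at the DVR $\O_{Y,E}$, a local description $\psi(h^{1/p^e})=\Psi_E((b_Eh)^{1/p^e})$ with $\Psi_E$ a generator of $\Hom_{\O_{Y,E}}(\O_{Y,E}^{1/p^e},\O_{Y,E})$ and $\ord_E(b_E)=(p^e-1)\ord_E(\Delta_\psi)\ge\lfloor(p^e-1)(r_E\ord_D(\Delta_X)-\delta_E)\rfloor$; and for any such $\Psi_E$ and any $m$ one has the elementary bound $\ord_E(\Psi_E(m^{1/p^e}))\ge\lfloor\ord_E(m)/p^e\rfloor$ (extract the largest $p^e$-th power of a uniformizer from $m$ and use $\O_{Y,E}$-linearity). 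Combining these with $\ord_E(h)=r_E\ord_D(h)\ge-r_E\lfloor(p^e-1)\ord_D(\Delta_X)\rfloor$, the integrality of $(p^e-1)\delta_E$, and $\lfloor r_Ey\rfloor\ge r_E\lfloor y\rfloor$ (compare \eqref{eq:3}), a short computation gives
\[
\ord_E\!\left(\psi(h^{1/p^e})\right)\ \ge\ \left\lfloor\frac{-(p^e-1)\delta_E}{p^e}\right\rfloor\ \ge\ -\delta_E,
\]
as needed. I would stress that it is essential here to work with the \emph{round--down} classes $J_{\bullet,e}$ rather than the round--up classes $I_{\bullet,e}$: the computation only produces $\Delta_{\phi}\ge\tfrac{1}{p^e-1}\lfloor(p^e-1)\Delta_X\rfloor$, which need not dominate $\Delta_X$, so Lemma \ref{lem:rounddownTestIdeal} is exactly what makes the bookkeeping close.
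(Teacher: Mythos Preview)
Your proof is correct and follows the same high-level strategy as the paper: the same local reduction, the same use of Lemma~\ref{lem:rounddownTestIdeal} with the round-down submodules $J_{\bullet,e}$, and the same central claim that the assignment $\psi\mapsto(\Tt\circ\psi)|_{R^{1/p^e}}$ carries $J_{\Delta_{Y/\Tt},e}$ into $J_{\Delta_X,e}$ (the paper packages this as a commutative square and calls the map $\eta$).

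The genuine difference is in how the claim is verified in codimension one. The paper appeals to the composition-of-generators identity $\Psi\circ\Phi_S(\blank)=\Phi_R\circ\Psi^{1/p^e}(u^{1/p^e}\cdot\blank)$ for a unit $u$ (from \cite[Appendix F]{KunzKahlerDifferentials}, \cite[Lemma 3.9]{SchwedeFAdjunction}) and then writes $\eta(\phi)$ explicitly as $\Phi_R\big(a^{\lfloor t(p^e-1)\rfloor/p^e}\cdot\Psi^{1/p^e}(\ldots)\big)$, reading off membership in $J_{\Delta_X,e}$ directly. You instead use the elementary valuation bound $\ord_E(\Psi_E(m^{1/p^e}))\ge\lfloor\ord_E(m)/p^e\rfloor$ together with the observation $\Tt(S(\RamiT))\subseteq R$, reducing the question to a short floor-function estimate. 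Your route is arguably more self-contained (it avoids citing the generator-composition lemma), while the paper's computation is more algebraically explicit and makes transparent exactly which multiple of $\Phi_R$ arises. Either way, your remark that the round-down classes $J_{\bullet,e}$ are essential is exactly right and is the reason Lemma~\ref{lem:rounddownTestIdeal} was isolated.
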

\begin{proof}
The statement is local so we assume that $X$ is the spectrum of a local ring $R$ and that $Y$ is the spectrum of a semi-local ring $S$.  As in the proof of Proposition \ref{Prop:HardContainmentForQGorenstein}, using Lemma \ref{Lemma:SkodaTypeTheorem}, we assume that $\Delta_{Y/\Tt}$, $\Delta_X$ and $\RamiT$ are effective divisors.  Fix $c$ and $c'$ as in Lemma \ref{lem:rounddownTestIdeal}.

Consider the following diagram
\begin{equation}
\label{eqn:HardDiagramForTestIdealContainment}
\xymatrix{
\Hom_R(\Frob{R}{e}, R) \ar[r]^-{\varepsilon_X} & R \\
\Hom_S(\Frob{S}{e}, S) \ar[u]^{\eta} \ar[r]_-{\varepsilon_Y} & S \ar[u]_{\Tt}
}
\end{equation}
where $\varepsilon_X$ and $\varepsilon_Y$ are both evaluation at $cc'$, and $\eta$ is the composition
\[
\small
\xymatrix@C=12pt{
\eta \: \Hom_S(\Frob{S}{e}, S) \ar[r] & \Hom_R(\Frob{S}{e}, S) \ar[r]^-{\Tt} & \Hom_R(\Frob{S}{e}, R) \ar[r]^-{\kappa} & \Hom_R(\Frob{R}{e}, R)
}
\]
first of the forgetful map (every $S$-homomorphism is also an $R$-homomorphism), second with the application of $\Tt$ to the target, and lastly restriction of the source from $S^{1/p^{e}}$ to $R^{1/p^{e}}$.  One may easily verify the commutativity of the diagram in (\ref{eqn:HardDiagramForTestIdealContainment}).  The desired conclusion follows immediately from (\ref{eqn:AlternateTestIdealDesc1}), (\ref{eqn:AlternateTestIdealDesc2}), and (\ref{eqn:HardDiagramForTestIdealContainment}), together with the Claim \ref{clm.technicalProofInclusion} below.

\begin{claim}
\label{clm.technicalProofInclusion}
 $\eta(J_{\Delta_{Y/\Tt}, e}) \subseteq J_{\Delta_X, e} \subseteq
 \Hom_R(\Frob{R}{e}, R)$.
\end{claim}

To verify the claim, since $J_{\Delta_X, e}$ is a reflexive
$R$-module, it is sufficient to check over a height one prime of $R$.
Therefore, we may assume that $R$ is a DVR and that $S$ is a
semi-local Dedekind domain (in particular, both rings are principal ideal domains).  Choose $\Psi$ to be an $S$-module generator of $\Hom_R(S, R)$, $\Phi_S$ to be an $\Frob{S}{e}$-module generator of $\Hom_S(\Frob{S}{e}, S)$, and $\Phi_R$ to be an $\Frob{R}{e}$-module generator of $\Hom_R(\Frob{R}{e}, R)$.  We also have that $\Psi \, \circ \,  \Phi_S (\blank)= \Phi_R \, \circ \,  \Psi^{1/p^e}(u^{1/p^e} \cdot \blank)$ for some unit $u \in S$, since both $\Psi \, \circ \, \Phi_S$ and $\Phi_R \, \circ \, \Psi^{1/p^e}$ generate $\Hom_R(S^{1/p^e}, R)$ as an $S^{1/p^e}$-module; see for example \cite[Lemma 3.9]{SchwedeFAdjunction} or \cite[Appendix F]{KunzKahlerDifferentials}.  We may write $\Tt = \Psi(d \cdot \blank)$ where $d$ is a defining equation for $\RamiT$.  Write $\Delta_X = t \Div_X(a)$ where $a$ is a uniformizer for $R$.   Then $\Delta_{Y/\Tt} = t \Div_Y(a) - \Div_Y(d)$.  Also write $t \Div_Y(a) =\sum_i t_i \Div_Y(a_i)$ where the $\Div_Y(a_i)$ are prime divisors.

Notice that for any $e > 0$, $b_e = \left( \prod a_i^{\lfloor t_i (p^e- 1) \rfloor}\right) / a^{\lfloor t(p^e - 1) \rfloor}$ is an element of $S$.  This follows from the relation $\lfloor (p^{e}-1) t \rfloor \divisor_{Y}(a) \leq \lfloor (p^{e}-1) t \divisor_{Y}(a) \rfloor$.
Now, fix $\phi \in J_{\Delta_{Y/\Tt}, e}$ and write
\[
\phi(\blank) = \Phi_S \left(  \Frobp{ s {1 \over d^{p^e - 1} } (\prod
    a_i^{\lfloor t_i(p^e - 1) \rfloor}) }{e} \cdot \blank \right)
\]
where $s$ is an element of $S$.
Then for any $x^{1/p^e} \in R^{1/p^e}$ we have
\[
\begin{array}{rcl}
\eta(\phi)(x^{1/p^e}) &=& \Tt\left( \Phi_S\left(  \Frobp{xs {1 \over d^{p^e - 1} } \prod_i a_i^{\lfloor t_i(p^e - 1) \rfloor} }{e}\right) \right) \smallskip \\
&=&  \Psi\left(d \Phi_S\left(  \Frobp{ xs {1 \over d^{p^e - 1} } \prod_i a_i^{\lfloor t_i(p^e - 1) \rfloor} }{e} \right) \right) \smallskip \\
&=& \Psi\left( \Phi_S( \Frobp{ x s d \prod_i a_i^{\lfloor t_i(p^e - 1) \rfloor} }{e} ) \right) \smallskip \\
&=& \Phi_R \left( \Psi^{1/p^{e}}\left( \Frobp{ u x s d \prod_i a_i^{\lfloor t_i(p^e - 1) \rfloor} }{e} \right) \right) \smallskip \\
&=& \Phi_R \left( \Frobp{ x a^{\lfloor t(p^e - 1) \rfloor} }{e} \cdot \Psi^{1/p^{e}}\left( \Frobp{usdb_e}{e} \right) \right) \, \, .\\
\end{array}
\]
This proves the claim since we have $\Psi^{1/p^{e}}\left( \Frobp{usdb_e}{e} \right) \in R^{1/p^{e}}$.
\end{proof}

The previous two propositions combine into our main theorem for test ideals.

\begin{theorem}
\label{thm:TraceTestIdealFormula}
Suppose we are in the setting of \ref{set.TestIdealsSetting}.   Then
\[
\Tt\left(\tau_b(Y; \Delta_{Y/\Tt}, (\ba \cdot \O_Y)^t) \right) = \tau_b(X; \Delta_X, \ba^t).
\]
\end{theorem}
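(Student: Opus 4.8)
The plan is to obtain the asserted equality as the conjunction of the two inclusions already established under the hypotheses of Setting~\ref{set.TestIdealsSetting}. Concretely, the containment
\[
\Tt\left(\tau_b(Y; \Delta_{Y/\Tt}, (\ba \cdot \O_Y)^t)\right) \supseteq \tau_b(X; \Delta_X, \ba^t)
\]
is exactly Proposition~\ref{Prop:EasyTestIdealContainment}, and the reverse containment is Proposition~\ref{Prop:HardTestIdealContainment}. So the proof I would write is a one-line citation of these two results; no further argument is needed. (In the special case that $K_X + \Delta_X$ is $\bQ$-Cartier with index not divisible by $p$, the reverse inclusion may instead be quoted from the more transparent Proposition~\ref{Prop:HardContainmentForQGorenstein}.)

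For orientation I would briefly recall the mechanism behind each inclusion. The easy inclusion rests on the transpose formalism of Section~\ref{sec:LiftingCriterionTheorem}: for every $e \geq 0$ and every $\phi \in I_{\Delta_X, e}$ the $\Tt$-transpose $\phi_{\Tt}$ exists (Propositions~\ref{propGenericLifting} and~\ref{prop:LiftingIffCodim1}) and lies in $I_{\Delta_{Y/\Tt}, e}$, since $\Delta_{\phi_{\Tt}} = \pi^{*}\Delta_\phi - \RamiT$ by Theorem~\ref{thm:mainliftingcriterion}; combined with the functoriality $\Tt \circ \phi_{\Tt} = \phi \circ \Tt^{1/p^e}$, this shows that $\Tt(\tau_b(Y; \Delta_{Y/\Tt}, (\ba\cdot\O_Y)^t))$ is a non-zero ideal of $R$ stable under every $\phi \in I_{\Delta_X,e}$, hence contains the smallest such ideal $\tau_b(X;\Delta_X,\ba^t)$ by Definition~\ref{DefnBigTestIdeal}.

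The hard inclusion is where the content lies. After reducing (via the Skoda-type Lemma~\ref{Lemma:SkodaTypeTheorem}) to the case of effective divisors over a local base, one replaces the round-up description of both test ideals by the round-down description of Lemma~\ref{lem:rounddownTestIdeal} and checks that the transfer map $\eta$ constructed in the proof of Proposition~\ref{Prop:HardTestIdealContainment} carries $J_{\Delta_{Y/\Tt}, e}$ into $J_{\Delta_X, e}$ (Claim~\ref{clm.technicalProofInclusion}); this last statement is verified over a height-one prime by tracking how module generators of $\Hom_S(\Frob{S}{e}, S)$, $\Hom_R(S,R)$ and $\Hom_R(\Frob{R}{e}, R)$ compose. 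Alternatively, in the $\bQ$-Gorenstein case one picks a single $\phi$ with $\Delta_\phi = \Delta_X$, takes its transpose, iterates Frobenius, and pushes $\Tt$ through using Corollary~\ref{cor:trcommutes} together with $\Tt(S) \subseteq R$.

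Since both inclusions are already in hand, there is no remaining obstacle for the theorem itself. Were one proving the statement from scratch, the genuine difficulty would be the reverse inclusion, and within it the two technical points just named: the passage from $\lceil(p^e-1)\Delta\rceil$ to $\lfloor(p^e-1)\Delta\rfloor$ in the construction of the test ideal, which forces the careful choice of the auxiliary element $cc'$, and the codimension-one bookkeeping in Claim~\ref{clm.technicalProofInclusion}, where the Kähler/Dedekind-different identity is used to control the composite of generators. The easy inclusion, by contrast, is essentially formal once the transpose formalism is available.
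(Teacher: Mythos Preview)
Your proposal is correct and matches the paper's own proof, which simply states that the theorem follows by combining Proposition~\ref{Prop:EasyTestIdealContainment} and Proposition~\ref{Prop:HardTestIdealContainment}. Your additional paragraphs recapitulating the mechanisms behind each inclusion are accurate and more informative than the paper's one-line remark, though the passing reference to the ``K\"ahler/Dedekind-different identity'' in Claim~\ref{clm.technicalProofInclusion} is slightly off: what is actually used there is that both $\Psi \circ \Phi_S$ and $\Phi_R \circ \Psi^{1/p^e}$ generate $\Hom_R(S^{1/p^e},R)$ as an $S^{1/p^e}$-module.
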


In the case that $\pi$ is separable and $\Tt$ is the trace map, we obtain the following.

\begin{corollary}
\label{cor:TraceTestIdealFormula}
Suppose we are in the setting of \ref{set.TestIdealsSetting}, and furthermore that $\pi$ is separable and $\Tt = \Tr_{Y/X}$ is the trace map.  Then
\[
\Tr\left(\tau_b(Y; \pi^* \Delta_{X} - \Ram_{\pi}, (\ba \cdot \O_Y)^t) \right) = \tau_b(X; \Delta_X, \ba^t).
\]
\end{corollary}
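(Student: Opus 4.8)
The plan is to obtain this statement as a direct specialization of Theorem \ref{thm:TraceTestIdealFormula}. The only point requiring verification is that, when $\pi$ is separable and $\Tt$ is chosen to be the field trace $\Tr_{Y/X}$, the divisor $\RamiT$ produced in Setting \ref{set.TestIdealsSetting} is precisely the ramification divisor $\Ram_{\pi}$, so that the triple $(Y, \Delta_{Y/\Tt}, (\ba\cdot\O_Y)^t)$ occurring there becomes $(Y, \pi^*\Delta_X - \Ram_{\pi}, (\ba\cdot\O_Y)^t)$.

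First I would check that $\Tr_{Y/X}$ is an admissible choice of $\Tt$ in Setting \ref{set.TestIdealsSetting}. Since $\pi$ is separable, the induced extension of function fields $K(X)\subseteq K(Y)$ is separable, so the field trace $\Tr_{Y/X}\colon K(Y)\to K(X)$ is a non-zero $K(X)$-linear — in particular $\O_X$-linear — map; moreover, as recalled in Section \ref{sec:Introduction}, it satisfies $\Tr_{Y/X}(\pi_*\O_Y)\subseteq\O_X$. Hence $\Tt:=\Tr_{Y/X}$ is a non-zero $\O_X$-linear map $\pi_*K(Y)\to K(X)$, exactly as required, and after restricting its target one is in the situation of Setting \ref{set.TestIdealsSetting}.

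Next I would identify $\RamiT$. By definition $\RamiT$ is the divisor on $Y$ corresponding to the restriction $\Tr_{Y/X}\colon \pi_*\O_Y\to\O_X$ via \eqref{eq:mapstodivisorsgeneral}, which in the case $\sL=\O_Y$ is just \eqref{eq:mapstodivisors}. Proposition \ref{prop:traceram} asserts precisely that under this correspondence the trace map corresponds to $\Ram_{\pi}$; the hypothesis $\pi^!\omega_X\simeq\omega_Y$ appearing there is exactly what is needed in order to make sense of \eqref{eq:mapstodivisorsgeneral} in Setting \ref{set.TestIdealsSetting}, and in any case the whole question is local and reduces to codimension one, where every ring in sight is a PID. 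Therefore $\RamiT = \Ram_{\pi}$, and consequently $\Delta_{Y/\Tt} = \pi^*\Delta_X - \RamiT = \pi^*\Delta_X - \Ram_{\pi}$.

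Finally, Theorem \ref{thm:TraceTestIdealFormula} applied with this $\Tt$ yields
\[
\Tr_{Y/X}\!\left(\tau_b(Y;\,\pi^*\Delta_X - \Ram_{\pi},\,(\ba\cdot\O_Y)^t)\right) = \tau_b(X;\Delta_X,\ba^t),
\]
which is the claim, valid for triples exactly as stated. The crux of the argument — the identification of trace with $\Ram_{\pi}$ — is entirely supplied by Proposition \ref{prop:traceram}, so within this corollary there is no further genuine obstacle; the work is purely a matter of matching the notation of Setting \ref{set.TestIdealsSetting} against the trace map.
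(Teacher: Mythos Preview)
Your proposal is correct and follows exactly the paper's approach: the corollary is stated immediately after Theorem~\ref{thm:TraceTestIdealFormula} as a direct specialization, relying on the identification $\RamiT = \Ram_{\pi}$ for $\Tt = \Tr_{Y/X}$ established in Proposition~\ref{prop:traceram} (and reiterated at the end of Theorem~\ref{thm:mainliftingcriterion}). You have simply made the implicit steps explicit.
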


\subsection{Test ideals, $F$-regularity, and $F$-purity under separable finite maps}
\label{sec:testidealproofs}

While the previous section explained how the test ideal transforms
under arbitrary finite surjective maps, more can often be said if $\pi$ is separable.  In this
case, we leverage the fact that maps in $\Hom_R(F^e_* R, R)$ extend to
maps in $\Hom_S(F^e_* S, S)$.  Therefore, throughout this section, we work under the following assumptions.

\begin{setting}
\label{set.SeparableTestIdealsSetting}
Let $\pi \: Y \rightarrow X$ be a finite separable morphism of
$F$-finite normal schemes
and $(X, \Delta_X, \ba^t)$ a triple.
Let $\Ram_{\pi}$ denote the ramification divisor of $\pi$ and $\Tr =
\Tr_{Y/X} \: \pi_{*}\O_{Y} \to \O_{X}$ the trace map.  We set
\[
\Delta_Y = \pi^*\Delta_X - \Ram_{\pi}
\]
and will consider the triple $(Y, \Delta_{Y},  (\ba \cdot \O_{Y})^{t})$.
\end{setting}

We begin with a description of how sharp $F$-purity behaves under finite maps.

\begin{theorem}
\label{thm:TraceSurjectiveImpliesFPureBehaves}
Suppose we are in the setting of \ref{set.SeparableTestIdealsSetting} and also assume that $\Delta_X$ and $\Delta_Y$ are effective.
Further suppose that $K_X + \Delta_X$ is $\bQ$-Cartier with index not divisible by $p > 0$ and that $\Tr = \Tr_{Y/X} \: \pi_{*}\O_{Y} \to \O_{X}$ is surjective.  Then $(X, \Delta_X, \ba^t)$ is sharply $F$-pure if and only if $(Y, \Delta_Y, (\ba \cdot \O_Y)^t)$ is sharply $F$-pure.
\end{theorem}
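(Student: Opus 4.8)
The plan is to reduce both implications to the characterization of sharp $F$-purity recalled earlier — that a triple $(\Spec A,\Delta,\mathfrak{c}^t)$ with $A$ local is sharply $F$-pure exactly when there is $n_0>0$ such that for every $m>0$ some $\psi\in I_{\Delta,mn_0}$ satisfies $\psi(\Frobp{\mathfrak{c}^{\lceil t(p^{mn_0}-1)\rceil}}{mn_0})=A$ — together with the transpose/extension machinery of Section~\ref{sec:LiftingCriterionTheorem}. Since sharp $F$-purity is checked at closed points, I would assume $X=\Spec R$ with $R$ local and $Y=\Spec S$ with $S$ semilocal, and write $K=\Frac R\subseteq L=\Frac S$. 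Using that $K_X+\Delta_X$ is $\bQ$-Cartier of index $r$ prime to $p$, fix an exponent $e$ with $r\mid p^e-1$; then $(p^e-1)(K_X+\Delta_X)$ is principal on the local ring $R$, so \eqref{eq:divmapcorr} gives $\phi\in\Hom_R(\Frob{R}{e},R)$ with $\Delta_\phi=\Delta_X$. As $\Delta_Y=\pi^*\Delta_X-\Ram_\pi\ge 0$, we have $\pi^*\Delta_\phi\ge\Ram_\pi$, so Theorem~\ref{thm:mainliftingcriterion} produces a $\Tr$-transpose $\bar\phi\in\Hom_S(\Frob{S}{e},S)$ with $\Delta_{\bar\phi}=\pi^*\Delta_X-\Ram_\pi=\Delta_Y$; for separable $\pi$ this $\bar\phi$ is precisely the unique extension of $\phi$ (the corollary following Proposition~\ref{propGenericLifting}). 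In particular $(1-p^e)(K_Y+\Delta_Y)$ is principal on $S$. Frobenius iterates keep associated divisors, so $\Delta_{\phi^m}=\Delta_X$ and $\Delta_{\bar\phi^m}=\Delta_Y$; moreover $\bar\phi^m$ agrees with $\phi^m$ on $\Frob{R}{me}$, and $\Tr\circ\bar\phi^m=\phi^m\circ\Frob{\Tr}{me}$ by Corollary~\ref{cor:trcommutes}.

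For \emph{$X$ sharply $F$-pure $\Rightarrow$ $Y$ sharply $F$-pure}: choose $n_0$ from the criterion for $(X,\Delta_X,\ba^t)$. Given $m>0$, pick $\phi_m\in I_{\Delta_X,mn_0}$ and $z\in\ba^{\lceil t(p^{mn_0}-1)\rceil}$ with $\phi_m(z^{1/p^{mn_0}})=1$. Since $\pi^*\Delta_{\phi_m}\ge\pi^*\Delta_X\ge\Ram_\pi$, Theorem~\ref{thm:mainliftingcriterion} gives the extension $\bar\phi_m\in\Hom_S(\Frob{S}{mn_0},S)$ with $\Delta_{\bar\phi_m}=\pi^*\Delta_{\phi_m}-\Ram_\pi\ge\Delta_Y$, so $\bar\phi_m\in I_{\Delta_Y,mn_0}$. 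Because $\bar\phi_m$ extends $\phi_m$ and $z\in\ba^{\lceil t(p^{mn_0}-1)\rceil}\subseteq(\ba\cdot\O_Y)^{\lceil t(p^{mn_0}-1)\rceil}$, we get $\bar\phi_m(z^{1/p^{mn_0}})=1$, whence $\bar\phi_m(\Frobp{(\ba\cdot\O_Y)^{\lceil t(p^{mn_0}-1)\rceil}}{mn_0})=S$. Applying the criterion to $Y$ with this same $n_0$ finishes the implication. (Note this direction uses neither the $\bQ$-Cartier hypothesis nor surjectivity of $\Tr$.)

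For \emph{$Y$ sharply $F$-pure $\Rightarrow$ $X$ sharply $F$-pure} the $\bQ$-Cartier hypothesis is what lets me identify $I_{\Delta_Y,\bullet}$. For $N$ divisible by $e$, $\lceil(p^N-1)\Delta_Y\rceil=(p^N-1)\Delta_Y$ is integral and the duality of Section~\ref{sec:CanModDuality} identifies $\Hom_S(\Frob{S((p^N-1)\Delta_Y)}{N},S)$ with $\Frob{S((1-p^N)(K_Y+\Delta_Y))}{N}$, a free rank-one $\Frob{S}{N}$-module since $(1-p^N)(K_Y+\Delta_Y)$ is principal on the semilocal ring $S$; its image in $\Hom_S(\Frob{S}{N},S)$ is generated by $\bar\phi^{N/e}$, so $I_{\Delta_Y,N}=\{\bar\phi^{N/e}(s^{1/p^N}\cdot\blank):s\in S\}$. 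Now take $n_0$ from the criterion for $(Y,\Delta_Y,(\ba\cdot\O_Y)^t)$, enlarged so that $e\mid n_0$ (legitimate: the criterion for $n_0$ implies the one for any multiple). Given $m>0$ put $N=mn_0$ and $j=\lceil t(p^N-1)\rceil$; since $(\ba\cdot\O_Y)^j=\ba^jS$, there are $s\in S$ and $w\in s\,\ba^jS$ with $\bar\phi^{N/e}(w^{1/p^N})=s_0$, where surjectivity of $\Tr$ is used to choose $s_0\in S$ with $\Tr(s_0)=1$. Then, using Corollary~\ref{cor:trcommutes},
\[
1=\Tr(s_0)=\Tr\big(\bar\phi^{N/e}(w^{1/p^N})\big)=\phi^{N/e}\big(\Frobp{\Tr(w)}{N}\big),
\]
and since $\ba^j$ is an ideal of $R$, $\Tr$ is $R$-linear, and $\Tr(S)\subseteq R$, we get $\Tr(w)\in\ba^j\,\Tr(sS)\subseteq\ba^j$. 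Hence $\phi^{N/e}\in I_{\Delta_X,N}$ satisfies $1\in\phi^{N/e}(\Frobp{\ba^{\lceil t(p^N-1)\rceil}}{N})$, and as $m$ was arbitrary the criterion gives sharp $F$-purity of $(X,\Delta_X,\ba^t)$.

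The main obstacle is this last implication. The crucial (and least formal) points are: deducing from the $\bQ$-Cartier hypothesis of index prime to $p$, and triviality of $\Pic S$ for $S$ semilocal, that $I_{\Delta_Y,N}$ is exactly the set of $\Frob{S}{N}$-multiples of the fixed extension $\bar\phi^{N/e}$ — without this description a witness on $Y$ cannot be pushed down to $X$ through the trace; and checking that a test element $z\in\ba^{\lceil t(p^N-1)\rceil}$ survives intact through $\Tr$ and through Frobenius roots, so that the map produced on $R$ genuinely witnesses sharp $F$-purity of $(X,\Delta_X,\ba^t)$. Surjectivity of $\Tr$ enters precisely here, supplying $s_0$ with $\Tr(s_0)=1$, whereas the forward implication requires only that $\Delta_Y$ be effective.
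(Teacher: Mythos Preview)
Your proof is correct and follows essentially the same approach as the paper's: localize, use the $\bQ$-Cartier hypothesis (index prime to $p$) to produce a fixed $\phi$ with $\Delta_\phi=\Delta_X$ whose extension $\bar\phi$ generates $I_{\Delta_Y,\bullet}$, prove the forward direction by simply extending a witness, and prove the reverse direction by pushing a witness on $Y$ down through the commutative square $\Tr\circ\bar\phi=\phi\circ\Tr^{1/p^e}$ together with surjectivity of $\Tr$. The only cosmetic differences are that the paper works directly with the generator $\phi_X$ in both directions (so in the forward step it never needs an arbitrary $\phi_m$, and in the reverse step it observes $\phi_Y((\ba S)^{\lceil t(p^e-1)\rceil/p^e})=S$ outright and then applies $\Tr$ to the entire image, using $\Tr^{1/p^e}\big((\ba S)^{\lceil t(p^e-1)\rceil/p^e}\big)=\ba^{\lceil t(p^e-1)\rceil/p^e}$), whereas you trace a single element $s_0$ with $\Tr(s_0)=1$; these are equivalent.
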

\begin{proof}
The statement is local on $X$ and so we assume that $X = \Spec R$ where $R$ is a local ring and $Y = \Spec S$ for a semi-local ring $S$.  There exists an $e_0 > 0$ such that for all $e = ne_0$ we have that $\Hom_R(\Frobp{R( (p^e - 1)\Delta_{X})}{e}, R)$ is free as an $\Frob{R}{e}$-module.  If $\phi_{X} \: R^{1/p^e} \to R$ is a generator, then $\Delta_{\phi_{X}} = \Delta_{X}$ and $I_{\Delta_{X}, \, e}$ consists entirely of the $R^{1/p^{e}}$-multiples of $\phi_{X}$.
Furthermore, it follows that $\Hom_S(\Frobp{S( (p^e - 1)\Delta_Y)}{e}, S) \cong \pi^* \O_X( (1-p^e)(K_X + \Delta_X)) \cong \pi^* \O_X \cong \O_Y$ is free as an
$\Frob{S}{e}$-module.
Thus, if we let $\phi_{Y} \: S^{1/p^e} \to S$ be the map extending $\phi_X$ (and thus a generator of $\Hom_S(\Frobp{S( (p^e - 1)\Delta_Y)}{e}, S) $ corresponding to $\Delta_Y$), similar considerations apply to $I_{\Delta_{X}, \, e}$.

First let us suppose $(X, \Delta_{X}, \ba^{t})$ is sharply $F$-pure.  From the description of $I_{\Delta_{X}, \, e}$ above and the definition of sharp $F$-purity, after possibly enlarging $e$ we must have
\[
1 \in \phi_{X}\left(\Frobp{ \ba^{\lceil (p^e - 1) \rceil} }{e} \right) \subseteq \phi_{Y}\left(\Frobp{ (\ba S)^{\lceil (p^{e}-1) \rceil}}{e}\right)
\]
and hence $(Y, \Delta_{Y}, (\ba \cdot \O_{Y})^{t})$ is sharply $F$-pure.

Conversely, suppose that $(Y, \Delta_{Y}, ( \ba \cdot \O_{Y})^{t})$ is sharply $F$-pure.  Making $e$ larger if necessary, we may assume that $\phi_Y((\ba \cdot \O_Y)^{\lceil t(p^e - 1) \rceil}) = \O_Y$ since it is true after localizing at each maximal ideal of $S$ for sufficiently large and divisible $e$.
 We have the commutative diagram
\[
\xymatrix{
\Frob{R}{e} \ar[r]^-{{\phi}_X} & R \\
\Frob{S}{e}  \ar[u]^-{\Frob{\Tr}{e}} \ar[r]_-{\phi_Y} & S \ar[u]_-{\Tr} \\
}
\]
and the statement immediately follows from the surjectivity of trace, as \[
(\Frob{\Tr}{e})\left(\Frobp{(\ba \cdot \O_Y)^{\lceil t(p^e - 1) \rceil}}{e}\right) = \Frobp{\ba^{\lceil t(p^e - 1) \rceil}}{e} \, \, .
\]
\end{proof}

\begin{remark}
 Example \ref{ex:nottame1} demonstrates that the surjectivity of the
 trace map is essential for the reverse implication in Theorem
 \ref{thm:TraceSurjectiveImpliesFPureBehaves}.  The forward
 direction always holds regardless of the sujectivity of the trace map.
\end{remark}

In the case that the trace map is surjective, we also obtain the following
formula.  As noted in the introduction, it mirrors the transformation
rule \eqref{eqn:multIdealFact} for the multiplier ideal in
characteristic zero.

\begin{corollary}
\label{cor:IfTraceSurjectiveThenIntersection}
Suppose we are in the setting of \ref{set.SeparableTestIdealsSetting}
and further suppose that the trace map $\Tr = \Tr_{Y/X} \: \pi_{*}\O_{Y} \to \O_{X}$ is surjective.   Then
\[
K(X) \cap \pi_* \tau_b(Y; \Delta_Y, (\ba \cdot \O_Y)^t) = \tau_b(X; \Delta_X, \ba^t)
\]
where $K(X)$ is the function field of $X$.  If in addition $\Delta_{X} \geq 0$, then
\[
\O_{X} \cap \pi_* \tau_b(Y; \Delta_Y, (\ba \cdot \O_Y)^t) = \tau_b(X; \Delta_X, \ba^t) \, \, .
\]
\end{corollary}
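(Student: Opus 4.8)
The plan is to establish the first (function‑field) identity and to deduce the second from it. Since the formation of big test ideals, the trace map, and the intersections in question all commute with localization, I would first reduce to the case where $X=\Spec R$ with $R$ local and $Y=\Spec S$ with $S$ semilocal. Then I would reduce further to the case in which both $\Delta_X$ and $\Delta_Y=\pi^{*}\Delta_X-\Ram_\pi$ are effective: writing $\Delta_X=\Delta_X^{+}-\Delta_X^{-}$, pick $f\in R\setminus\{0\}$ with $\Div_X(f)\geq \Delta_X^{-}+N\cdot B$, where $B$ is the reduced branch divisor of $\pi$ and $N$ is chosen large enough that $N\,\pi^{*}B\geq \Ram_\pi$ (such an $f$ exists because $X$ is affine, and such an $N$ exists because $\Ram_\pi$ is supported over $B$). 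Then $\Delta_X':=\Delta_X+\Div_X(f)\geq 0$ and $\Delta_Y':=\pi^{*}\Delta_X'-\Ram_\pi=\Delta_Y+\Div_Y(f)\geq 0$, and Lemma \ref{Lemma:SkodaTypeTheorem} gives $\tau_b(X;\Delta_X,\ba^{t})=\frac{1}{f}\,\tau_b(X;\Delta_X',\ba^{t})$ together with $\tau_b(Y;\Delta_Y,(\ba\O_Y)^{t})=\frac{1}{f}\,\tau_b(Y;\Delta_Y',(\ba\O_Y)^{t})$. Because multiplication by $f\in K(X)^{\times}$ is a bijection of $K(Y)$ carrying $K(X)$ onto itself, the first identity for the primed data is equivalent to it for the original data, so I would assume from here on that $\Delta_X,\Delta_Y\geq 0$.

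For the inclusion of the left side $\tau_b(X;\Delta_X,\ba^{t})$ in the right side $K(X)\cap\pi_{*}\tau_b(Y;\Delta_Y,(\ba\O_Y)^{t})$: here surjectivity of the trace is used to pick $s_0\in S$ with $\Tr_{Y/X}(s_0)=1$. If $x\in K(X)\cap\pi_{*}\tau_b(Y;\Delta_Y,(\ba\O_Y)^{t})$, then $s_0 x$ belongs to the $S$-ideal $\tau_b(Y;\Delta_Y,(\ba\O_Y)^{t})$, while $\Tr_{Y/X}(s_0 x)=x\cdot\Tr_{Y/X}(s_0)=x$ by $K(X)$-linearity of the trace; hence $x=\Tr_{Y/X}(s_0 x)\in\Tr_{Y/X}\big(\pi_{*}\tau_b(Y;\Delta_Y,(\ba\O_Y)^{t})\big)=\tau_b(X;\Delta_X,\ba^{t})$ by Corollary \ref{cor:TraceTestIdealFormula}.

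For the reverse inclusion, I would set $J:=\pi_{*}\tau_b(Y;\Delta_Y,(\ba\O_Y)^{t})\cap R$ and verify that $J$ satisfies the defining property of $\tau_b(X;\Delta_X,\ba^{t})$ from Definition \ref{DefnBigTestIdeal}; minimality of the big test ideal then gives $\tau_b(X;\Delta_X,\ba^{t})\subseteq J$, and since also $\tau_b(X;\Delta_X,\ba^{t})\subseteq R\subseteq K(X)$ this yields the claimed inclusion. First, $J$ is a nonzero ideal of $R$: a minimal polynomial relation over $R$ of any nonzero element $s$ of the $S$-ideal $\tau_b(Y;\Delta_Y,(\ba\O_Y)^{t})$ has nonzero constant term lying in $sS\cap R\subseteq J$. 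Next, fix $e\geq 0$ and $\phi\in I_{\Delta_X,e}$ (notation of Definition \ref{DefnIDeltaJDelta}), with $\phi\neq 0$ (the case $\phi=0$ being trivial); then $\Delta_\phi\geq\Delta_X$ and hence $\pi^{*}\Delta_\phi\geq\pi^{*}\Delta_X\geq\Ram_\pi$, the last inequality because $\Delta_Y\geq 0$. By Theorem \ref{thm:mainliftingcriterion} (with $\sL=\O_X$ and $\Tt=\Tr_{Y/X}$, so $\RamiT=\Ram_\pi$), the map $\phi$ has a $\Tr_{Y/X}$-transpose $\bar\phi\in\Hom_S(S^{1/p^{e}},S)$ with $\Delta_{\bar\phi}=\pi^{*}\Delta_\phi-\Ram_\pi\geq\pi^{*}\Delta_X-\Ram_\pi=\Delta_Y$, so $\bar\phi\in I_{\Delta_Y,e}$; crucially, since $\pi$ is separable, $\bar\phi$ is the unique $K(Y)$-linear map extending $\phi$ (Lemma \ref{lem:genericextension}), so $\bar\phi(w^{1/p^{e}})=\phi(w^{1/p^{e}})$ for every $w\in R$. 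Consequently, for any $w\in\ba^{\lceil t(p^{e}-1)\rceil}J\subseteq R$ one has $w\in(\ba\O_Y)^{\lceil t(p^{e}-1)\rceil}\,\tau_b(Y;\Delta_Y,(\ba\O_Y)^{t})$, whence $\phi(w^{1/p^{e}})=\bar\phi(w^{1/p^{e}})\in\tau_b(Y;\Delta_Y,(\ba\O_Y)^{t})$ by the defining property of the latter; since also $\phi(w^{1/p^{e}})\in R$, it lies in $J$, as required. This proves the first identity, and the second follows immediately: when the original $\Delta_X$ is effective, $\tau_b(X;\Delta_X,\ba^{t})\subseteq\O_X$, so intersecting the first identity with $\O_X$ gives $\O_X\cap\pi_{*}\tau_b(Y;\Delta_Y,(\ba\O_Y)^{t})=\O_X\cap\tau_b(X;\Delta_X,\ba^{t})=\tau_b(X;\Delta_X,\ba^{t})$.

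The step I expect to require the most care is the second inclusion, namely checking that $J$ is stable under every $\phi\in I_{\Delta_X,e}$. This genuinely uses separability of $\pi$ in two ways: for the existence of the extension $\bar\phi$ (Theorem \ref{thm:mainliftingcriterion}) and for the fact that $\bar\phi$ restricts to $\phi$ on $R^{1/p^{e}}$ (Lemma \ref{lem:genericextension}). In the inseparable situation of the Main Theorem no such extension exists at all (Proposition \ref{prop:nonsepnoextend}), which is precisely why the two-sided intersection formula is special to the separable, trace‑surjective setting treated here.
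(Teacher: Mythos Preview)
Your proof is correct and follows essentially the same approach as the paper: reduce to the local effective case via Lemma~\ref{Lemma:SkodaTypeTheorem}, use surjectivity of trace together with Corollary~\ref{cor:TraceTestIdealFormula} for one containment, and use that every $\phi\in I_{\Delta_X,e}$ extends to some $\bar\phi\in I_{\Delta_Y,e}$ (Theorem~\ref{thm:mainliftingcriterion}) to check that $R\cap\tau_b(Y;\Delta_Y,(\ba\O_Y)^t)$ satisfies the defining property of $\tau_b(X;\Delta_X,\ba^t)$ for the other. One expository slip: your labeling of the two inclusions is reversed---the paragraph headed ``For the inclusion of the left side $\tau_b(X;\ldots)$ in the right side $K(X)\cap\ldots$'' actually establishes $K(X)\cap\pi_*\tau_b(Y;\ldots)\subseteq\tau_b(X;\ldots)$, and the paragraph headed ``For the reverse inclusion'' establishes $\tau_b(X;\ldots)\subseteq R\cap\pi_*\tau_b(Y;\ldots)$---but the arguments themselves are sound.
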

\begin{proof}
The statement is local so we may assume that $X$ is the spectrum of a local ring $R$, and that $Y$ is the spectrum of a semi-local ring $S$.  As in the proof of Proposition \ref{Prop:HardContainmentForQGorenstein}, by using Lemma \ref{Lemma:SkodaTypeTheorem}, we assume that both $\Delta_Y$ and $\Delta_X$ are effective divisors.
Since $\Tr \: S \to R$ is surjective and $R$-linear, we see that
\[
R \cap \tau_b(Y; \Delta_Y, (\ba \cdot \O_Y)^t) \subseteq \Tr(\tau_b(Y; \Delta_Y, (\ba \cdot \O_Y)^t)) = \tau_{b}(X;\Delta_{X}, \ba^{t}) \, \, .
\]
On the other hand, using an argument similar to Proposition \ref{Prop:EasyTestIdealContainment}, one easily verifies (or sees Theorem~\ref{thm:reallynonoptimalextensionresult} below) that $R \,  \cap \, \tau_b(Y; \Delta_Y, (\ba \cdot \O_Y)^t) \supseteq \tau_b(X; \Delta_X, \ba^t)$.  Indeed, since every $\phi \in I_{\Delta_{X}, \, e}$ has an extension $\bar{\phi} \in I_{\Delta_{Y}, \, e}$, it follows that
$\phi\left( \Frobp{\ba^{\lceil t(p^e - 1) \rceil} (R \cap \tau_{b}(Y;\Delta_{Y}, (\ba \cdot \O_{Y})^{t}))}{e}\right) \subseteq R \cap \tau_{b}(Y;\Delta_{Y}, (\ba \cdot \O_{Y})^{t})$ as we have $\bar{\phi}\left(\Frobp{\ba^{\lceil t(p^e - 1) \rceil} (\tau_{b}(Y; \Delta_{Y}, (\ba \cdot \O_{Y})^{t}))}{e}\right) \subseteq \tau_{b}(Y; \Delta_{Y}, (\ba \cdot \O_{Y})^{t})$.
\end{proof}

\begin{corollary}
\label{cor:IfTraceSurjectsThenStrongFRegularity}
Suppose we are in the setting of \ref{set.SeparableTestIdealsSetting}
and further suppose that the trace map $\Tr = \Tr_{Y/X} \: \pi_{*}\O_{Y} \to \O_{X}$ is surjective.  Then
$(X, \Delta_X \geq 0, \ba^t)$ is strongly $F$-regular if and only if $(Y, \Delta_Y \geq 0, (\ba \cdot \O_Y)^t)$ is strongly $F$-regular.
\end{corollary}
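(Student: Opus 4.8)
The plan is to deduce the equivalence from the test ideal formula already proved, combining Corollary~\ref{cor:TraceTestIdealFormula} with Corollary~\ref{cor:IfTraceSurjectiveThenIntersection}, and using the standard characterization that a triple is strongly $F$-regular precisely when its big test ideal is the full structure sheaf (part (iv) of the Proposition in Section~\ref{sec:testidealsdefinitions}). Since strong $F$-regularity is checked at closed points and the formation of $\tau_b$ commutes with localization, the assertion is local on $X$; so I would first reduce, exactly as in the proofs above, to the case $X = \Spec R$ with $R$ local and $Y = \Spec S$ with $S$ semi-local. In this setting the two sides of the claimed equivalence become $\tau_b(X; \Delta_X, \ba^t) = R$ and $\tau_b(Y; \Delta_Y, (\ba\cdot\O_Y)^t) = S$ respectively.

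For the implication from strong $F$-regularity of $(Y, \Delta_Y, (\ba\cdot\O_Y)^t)$ to that of $(X, \Delta_X, \ba^t)$, I would simply apply Corollary~\ref{cor:TraceTestIdealFormula}: if $\tau_b(Y; \Delta_Y, (\ba\cdot\O_Y)^t) = S$, then
\[
\tau_b(X; \Delta_X, \ba^t) = \Tr\bigl(\tau_b(Y; \Delta_Y, (\ba\cdot\O_Y)^t)\bigr) = \Tr(S) = R,
\]
the last equality because the trace map is surjective by hypothesis.

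For the converse, assume $\tau_b(X; \Delta_X, \ba^t) = R$. Since $\Delta_X \geq 0$, I may use the second displayed equality of Corollary~\ref{cor:IfTraceSurjectiveThenIntersection}, which gives
\[
R \cap \pi_*\tau_b(Y; \Delta_Y, (\ba\cdot\O_Y)^t) = \tau_b(X; \Delta_X, \ba^t) = R.
\]
Hence $R \subseteq \tau_b(Y; \Delta_Y, (\ba\cdot\O_Y)^t)$, and in particular $1 \in \tau_b(Y; \Delta_Y, (\ba\cdot\O_Y)^t)$; since the latter is an ideal of $S$, it equals $S$, and so $(Y, \Delta_Y, (\ba\cdot\O_Y)^t)$ is strongly $F$-regular.

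I do not expect a genuine difficulty, since the substantive content is already in Corollaries~\ref{cor:TraceTestIdealFormula} and~\ref{cor:IfTraceSurjectiveThenIntersection}. The only point demanding a little care is purely definitional: strong $F$-regularity and its test-ideal characterization were formulated for triples with effective boundary, whereas $\Delta_Y = \pi^*\Delta_X - \Ram_\pi$ need not be effective in general, so one should either retain the hypothesis $\Delta_Y \geq 0$ (as in the statement) or interpret the definition in the non-effective case via the Skoda-type relation of Lemma~\ref{Lemma:SkodaTypeTheorem}; under either convention the two displays above are unchanged.
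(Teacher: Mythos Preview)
Your proof is correct and is essentially the same as the paper's. The paper's proof is a one-liner invoking only the characterization of strong $F$-regularity via triviality of the big test ideal (part (iv) of the Proposition in Section~\ref{sec:testidealsdefinitions}), leaving the reader to feed this into the immediately preceding Corollary~\ref{cor:IfTraceSurjectiveThenIntersection}; you spell out the details and, for the direction $Y \Rightarrow X$, use Corollary~\ref{cor:TraceTestIdealFormula} together with surjectivity of trace rather than the intersection formula, but this is a cosmetic variation.
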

\begin{proof}
This follows immediately since $(X, \Delta_X, \ba^t)$ (respectively $(Y, \Delta_Y, (\ba \cdot \O_Y)^t)$) is strongly $F$-regular if and only if $\tau_b(X, \Delta_X, \ba^t) = \O_X$ (respectively $\tau_b(Y; \Delta_Y, (\ba \cdot \O_Y)^t) = \O_Y$).
\end{proof}

\begin{remark}
Analogous to Corollary \ref{cor:IfTraceSurjectsThenStrongFRegularity},
the property of having Kawamata log terminal (klt) singularities is
equivalent for appropriately
related triples under finite surjective morphisms in characteristic zero
\cite[Proposition 5.20(4)]{KollarMori}.
\end{remark}

\begin{example}
\label{ex:WithoutTraceSurjectiveThingsAreFalse}
Without the hypothesis that the trace map is surjective, Corollaries \ref{cor:IfTraceSurjectiveThenIntersection} and \ref{cor:IfTraceSurjectsThenStrongFRegularity} are false.  To see this, work in the setting of Example \ref{ex:nottame1}.  In this case, one has an extension of rings $k[[x^2(x^3+1)]] = R \subseteq S = k[[x]]$ and a map $\phi \: R^{1/p^e} \to R$ that extends to a map $\bar{\phi} \: S^{1/p^e} \to S$ which generates $\Hom_S(S^{1/p^e}, S)$ as an $S^{1/p^e}$-module.  Therefore $\tau_b(S, \Delta_{\bar{\phi}}) = S$.  On the other hand one can check that $(x^2(x^3+1))^2 = \tau_b(R; \Delta_{\phi}) \neq R$.  Because this example considers one-dimensional regular rings, the test ideal coincides with the characteristic $p > 0$ multiplier ideal.  Therefore equation \eqref{eqn:multIdealFact} also does not hold for multiplier ideals even for separable maps in characteristic $p > 0$.
\end{example}

We conclude with a different type of containment result about test ideals.  However, in the presence of essentially any ramification at all, this result nearly always fails to be optimal, see Example \ref{ex:NonOptimalExtension} below.

\begin{theorem}
  \label{thm:reallynonoptimalextensionresult}
Suppose we are in the setting of \ref{set.SeparableTestIdealsSetting} where we additionally assume $\Delta_{X}$ is effective.   Then
\[
\tau_b(X; \Delta_X, \ba^t) \cdot \O_{Y} \subseteq \tau_b(Y; \Delta_Y, (\ba \cdot \O_Y)^t)  \, \, .
\]
\end{theorem}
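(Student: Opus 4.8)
The plan is to reduce to a local, \emph{effective} situation and then read off the containment directly from the explicit construction of the test ideal in Proposition~\ref{prop:ConstructionOfTestIdeal}. Since the formation of $\tau_b$ commutes with localization, I would first pass to the case $X = \Spec R$ with $R$ local and $Y = \Spec S$ with $S$ semi-local. Next I would arrange that $\Delta_Y = \pi^*\Delta_X - \Ram_\pi$ is effective: the support of $\Ram_\pi$ lies over finitely many prime divisors of $X$, so one can choose $f \in R \setminus \{0\}$ vanishing along each of them to high enough order that $\Delta_Y + \Div_Y(f) = \pi^*(\Delta_X + \Div_X(f)) - \Ram_\pi \geq 0$ (note $\Div_Y(f) = \pi^*\Div_X(f)$ since $f \in R$, and $\Delta_X + \Div_X(f) \geq \Delta_X \geq 0$). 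By Lemma~\ref{Lemma:SkodaTypeTheorem}, passing from $(\Delta_X, \Delta_Y)$ to $(\Delta_X + \Div_X(f), \Delta_Y + \Div_Y(f))$ multiplies both test ideals by $f$, so the asserted inclusion is unaffected; hence it suffices to treat the case $\Delta_Y \geq 0$, which I assume henceforth.

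In this reduced situation I would fix, exactly as in the proof of Proposition~\ref{Prop:HardContainmentForQGorenstein}, an element $c \in R \setminus \{0\}$ satisfying the conclusion of Lemma~\ref{Lemma:ExistOfTestElts} simultaneously for $(X, \Delta_X, \ba^t)$ and for $(Y, \Delta_Y, (\ba\cdot\O_Y)^t)$ (such $c$ exists by Remark~\ref{RemarkExistenceHelp}: take $c \in \ba \cap R \setminus \{0\}$ avoiding the non-regular loci of $R$ and of $S$ and the loci where the relevant $\Hom$-module generators degenerate, then pass to a suitable power). Proposition~\ref{prop:ConstructionOfTestIdeal} then gives
\[
\tau_b(X;\Delta_X,\ba^t) = \sum_{e\geq 0}\ \sum_{\phi\in I_{\Delta_X,e}} \phi\!\left(\Frobp{c\,\ba^{\lceil t(p^e-1)\rceil}}{e}\right)
\quad\text{and}\quad
\tau_b(Y;\Delta_Y,(\ba\cdot\O_Y)^t) = \sum_{e\geq 0}\ \sum_{\psi\in I_{\Delta_Y,e}} \psi\!\left(\Frobp{c\,(\ba\cdot\O_Y)^{\lceil t(p^e-1)\rceil}}{e}\right).
\]
For each $e$ and each $\phi \in I_{\Delta_X,e}$ one has $\Delta_\phi \geq \Delta_X$, hence $\pi^*\Delta_\phi \geq \pi^*\Delta_X \geq \Ram_\pi$, the last inequality because $\Delta_Y = \pi^*\Delta_X - \Ram_\pi \geq 0$. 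So by Theorem~\ref{thm:mainliftingcriterion} with $\Tt = \Tr_{Y/X}$ (for which $\RamiT = \Ram_\pi$), $\phi$ extends to some $\bar\phi \in \Hom_S(S^{1/p^e},S)$ with $\Delta_{\bar\phi} = \pi^*\Delta_\phi - \Ram_\pi \geq \pi^*\Delta_X - \Ram_\pi = \Delta_Y$, i.e. $\bar\phi \in I_{\Delta_Y,e}$.

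To finish I would observe that $\bar\phi$ restricts to $\phi$ on $R^{1/p^e}$ and that $c\,\ba^{\lceil t(p^e-1)\rceil} \subseteq c\,(\ba\cdot\O_Y)^{\lceil t(p^e-1)\rceil}$, whence
\[
\phi\!\left(\Frobp{c\,\ba^{\lceil t(p^e-1)\rceil}}{e}\right) = \bar\phi\!\left(\Frobp{c\,\ba^{\lceil t(p^e-1)\rceil}}{e}\right) \subseteq \bar\phi\!\left(\Frobp{c\,(\ba\cdot\O_Y)^{\lceil t(p^e-1)\rceil}}{e}\right) \subseteq \tau_b(Y;\Delta_Y,(\ba\cdot\O_Y)^t),
\]
the last inclusion because $\bar\phi \in I_{\Delta_Y,e}$ and the right-hand side is the sum displayed above. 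Summing over all $e \geq 0$ and all $\phi \in I_{\Delta_X,e}$ yields $\tau_b(X;\Delta_X,\ba^t) \subseteq \tau_b(Y;\Delta_Y,(\ba\cdot\O_Y)^t)$ inside $\Frac(S)$, and since the target is an $S$-ideal this upgrades to $\tau_b(X;\Delta_X,\ba^t)\cdot\O_Y \subseteq \tau_b(Y;\Delta_Y,(\ba\cdot\O_Y)^t)$. The one step genuinely requiring care is the initial reduction to $\Delta_Y \geq 0$: without it a map $\phi \in I_{\Delta_X,e}$ with $\Delta_\phi$ only slightly larger than $\Delta_X$ need not satisfy $\pi^*\Delta_\phi \geq \Ram_\pi$ and so need not extend, which is exactly why the Skoda-type Lemma~\ref{Lemma:SkodaTypeTheorem} is invoked up front; everything after that is routine bookkeeping with the test-ideal formula.
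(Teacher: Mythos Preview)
Your argument is correct and follows essentially the same route as the paper's proof: reduce to the local case, arrange $\Delta_Y \geq 0$ via Lemma~\ref{Lemma:SkodaTypeTheorem}, choose a common $c$ from Lemma~\ref{Lemma:ExistOfTestElts}, extend each $\phi \in I_{\Delta_X,e}$ to $\bar\phi \in I_{\Delta_Y,e}$ via Theorem~\ref{thm:mainliftingcriterion}, and read off the inclusion from the explicit description in Proposition~\ref{prop:ConstructionOfTestIdeal}. Your write-up is in fact a bit more explicit than the paper's---you spell out the Skoda reduction and the verification that $\bar\phi \in I_{\Delta_Y,e}$, both of which the paper leaves to the reader---but the underlying argument is the same.
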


\begin{proof}
  The statement is local so we may assume that $X$ is the spectrum of a local ring $R$ and that $Y$ is the spectrum of a semi-local ring $S$.  As before, it is harmless to assume that $\Delta_Y$ is effective.  Choose $c \in R\setminus \{0\}$ satisfying the condition of Lemma \ref{Lemma:ExistOfTestElts} for both the affine triple $(X = \Spec R, \Delta_X, \ba^t)$ and the affine triple $(Y = \Spec S, \Delta_Y, (\ba \cdot \O_Y)^t )$.  Then, again recalling that every $\phi \in I_{\Delta_{X}, \, e}$ has an extension $\bar{\phi} \in I_{\Delta_{Y}, \, e}$, we see
\[
\begin{array}{rcl}
  \tau_{b}(X;\Delta_{X}, \ba^{t}) \cdot \O_{Y} & = & \left( \sum_{e\geq 0} \left( \sum_{\phi \in I_{\Delta_{X}, \, e}} \phi \left( \Frob{\left(c \ba^{\lceil (p^{e}-1)t \rceil}\right)}{e} \right) \right) \right) \cdot \O_{Y} \smallskip \\
& \subseteq & \sum_{e\geq 0} \left( \sum_{\phi \in I_{\Delta_{X}, \, e}} \bar{\phi} \left( \Frob{\left( c (\ba \cdot \O_{Y})^{\lceil (p^{e}-1)t \rceil}\right)}{e} \right) \right) \smallskip \smallskip \\
& \subseteq & \sum_{e\geq 0} \left( \sum_{\psi \in I_{\Delta_{Y}, \, e}} \psi \left( \Frob{\left( c (\ba \cdot \O_{Y})^{\lceil (p^{e}-1)t \rceil}\right)}{e} \right) \right) \smallskip \\
& = &\tau_{b}(Y;\Delta_{Y}, (\ba \cdot \O_{Y})^{t})
\end{array}
\]
as desired.
\end{proof}

As mentioned, this result is far from optimal.

\begin{example}
\label{ex:NonOptimalExtension}
Consider the extension of rings $R = k[x^2] \subseteq k[x] = S$ where $\text{char } k \neq 2$.  Set $X = \Spec R$, $Y = \Spec S$, and $\Delta_X = \Div_X(x^2)$ so that $\Delta_Y = \Div_Y(x^2) - \Div_Y(x) = \Div_Y(x)$.  In this case, $\tau_b(X; \Delta_X) \cdot \O_Y = x^2 \cdot \O_Y$ but $\tau_b(Y; \Delta_Y) = x \cdot \O_Y$.
\end{example}

\begin{corollary}
  \label{cor:nonoptimalfregextension}
Suppose we are in the setting of \ref{set.SeparableTestIdealsSetting}  where we additionally assume $\Delta_{X}$ and $\Delta_{Y}$ are effective.      If $(X, \mbox{$\Delta_X \geq 0$}, \ba^t)$ is strongly $F$-regular, then  $(Y, \mbox{$\Delta_Y \geq 0$}, \mbox{$(\ba \cdot \O_Y)^t$})$ is as well.
\end{corollary}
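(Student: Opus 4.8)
The plan is to deduce this at once from Theorem~\ref{thm:reallynonoptimalextensionresult}, using the characterization of strong $F$-regularity in terms of the triviality of the big test ideal recorded in part~(iv) of the Proposition preceding Setting~\ref{set.TestIdealsSetting}. First I would note that, since the formation of $\tau_b$ commutes with localization and strong $F$-regularity is defined via the stalks at closed points, a triple $(Z,\Delta_Z,\mathfrak{c}^s)$ with $\Delta_Z\geq 0$ is strongly $F$-regular if and only if $\tau_b(Z;\Delta_Z,\mathfrak{c}^s)=\O_Z$; this applies both to $(X,\Delta_X,\ba^t)$ and to $(Y,\Delta_Y,(\ba\cdot\O_Y)^t)$, the latter triple having effective divisor $\Delta_Y$ by hypothesis.

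Granting this, the argument is immediate. Suppose $(X,\Delta_X,\ba^t)$ is strongly $F$-regular, so that $\tau_b(X;\Delta_X,\ba^t)=\O_X$. Since $\Delta_X$ is effective, Theorem~\ref{thm:reallynonoptimalextensionresult} applies and gives
\[
\O_Y \;=\; \O_X\cdot\O_Y \;=\; \tau_b(X;\Delta_X,\ba^t)\cdot\O_Y \;\subseteq\; \tau_b(Y;\Delta_Y,(\ba\cdot\O_Y)^t) \;\subseteq\; \O_Y,
\]
so all inclusions are equalities and $\tau_b(Y;\Delta_Y,(\ba\cdot\O_Y)^t)=\O_Y$. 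By the characterization above, $(Y,\Delta_Y,(\ba\cdot\O_Y)^t)$ is strongly $F$-regular.

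There is essentially no genuine obstacle here; the proof is a formal consequence of an already-established containment of test ideals. The only point that warrants a moment's care is that the equivalence ``strongly $F$-regular $\Leftrightarrow$ $\tau_b$ equals the unit ideal'' is stated for local rings, so one should either phrase the conclusion stalk-by-stalk at closed points of $Y$ or simply invoke the fact that a coherent ideal sheaf which is the whole structure sheaf at every closed point is the whole structure sheaf — both being harmless since strong $F$-regularity and the formation of $\tau_b$ are local.
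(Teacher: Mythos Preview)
Your proof is correct and is precisely the argument the paper has in mind: the corollary is stated immediately after Theorem~\ref{thm:reallynonoptimalextensionresult} with no separate proof, because it follows at once from that containment together with the characterization $\tau_b = \O$ of strong $F$-regularity. Your remark about passing between the local characterization and the global statement via localization of $\tau_b$ is the only nontrivial observation, and it is handled correctly.
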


\section{On the surjectivity of the trace map}
\label{sec:SurjectivityOfTrace}

In the previous section, we saw that the surjectivity of the trace map
was useful for describing the behavior of the test ideal under
separable finite morphisms.  We therefore have the following natural question.

\begin{question}
\label{quest:WhenDoesTraceSurject}
 Suppose that $R \subseteq S$ is a generically separable module-finite
 inclusion of \mbox{$F$-finite} normal domains with $K = \Frac R$ and $L = \Frac S$.  When is it true that $\Tr_{L/K}(S) = R$?
\end{question}

\begin{remark}
The condition that $\Tr_{L/K}(S) = R$ has been recently coined \emph{cohomologically tamely ramified} in the Galois case by Kerz and Schmidt; see \cite[Claim 1, Theorem 6.2]{KerzSchmidtOnDifferentNotionsOfTameness}.
\end{remark}
The rest of this section will be devoted to giving some partial
answers to Question \ref{quest:WhenDoesTraceSurject}.  The methods of
this section can also be adapted to other (fixed) maps $\Tt : S \to
R$, and we leave it to the reader to flush out precise statements and details.  We begin with a negative answer: the contrapositive of Theorem \ref{thm:TraceSurjectiveImpliesFPureBehaves}.  This will be followed by several positive results.

\begin{proposition}
 Suppose that $R \subseteq S$ is a generically separable module-finite
 inclusion of $F$-finite normal domains of characteristic $p > 0$ with $K = \Frac R$ and $L = \Frac S$.
 Suppose that $\phi \: R^{1/p^e} \rightarrow R$ is a map that extends
 to a map $\bar{\phi} \: S^{1/p^e} \to S$.  Further suppose that $(R,
 \Delta_{\phi})$ is not sharply $F$-pure (\ie $\phi$ is \emph{not}
 surjective) but $(S, \Delta_{\bar{\phi}})$ is sharply $F$-pure (\ie
 $\bar{\phi}$ is surjective).  Then the trace map is not surjective,
 \ie $\Tr_{L/K}(S) \neq R$.
\end{proposition}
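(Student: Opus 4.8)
The plan is to prove the statement in contrapositive form: assuming the trace is surjective, \ie $\Tr_{L/K}(S)=R$, I will show that $\phi$ must already be surjective, which contradicts the hypothesis that $(R,\Delta_\phi)$ is not sharply $F$-pure. The only nontrivial input is the commutativity of trace with the extension $\bar\phi$, established in Corollary~\ref{cor:trcommutes}: since $\phi\in\Hom_R(R^{1/p^e},R)$ extends to $\bar\phi\in\Hom_S(S^{1/p^e},S)$ and $R\subseteq S$ is a generically separable module-finite inclusion of $F$-finite normal domains, the diagram \eqref{eq:RScommTr} commutes, \ie
\[
\Tr_{L/K}\circ\bar\phi \;=\; \phi\circ(\Tr_{L/K})^{1/p^e}
\]
as maps $S^{1/p^e}\to R$ (all appearing as restrictions of maps in $\Hom_K(L^{1/p^e},K)$). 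Note that $\phi\neq 0$ here, since $\bar\phi$ is surjective hence nonzero and, by Lemma~\ref{lem:genericextension}, $\bar\phi=\phi\tensor_K\id_L$ on fraction fields, so the hypotheses of Corollary~\ref{cor:trcommutes} are genuinely met.

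Granting this, I would argue purely at the level of images inside $K$. The hypothesis that $(S,\Delta_{\bar\phi})$ is sharply $F$-pure is, by the parenthetical identification in the statement, exactly the statement that $\bar\phi$ is surjective, so $\bar\phi(S^{1/p^e})=S$. Applying $\Tr_{L/K}$ and using $\Tr_{L/K}(S)=R$ (twice) then gives
\[
R \;=\; \Tr_{L/K}(S) \;=\; \Tr_{L/K}\!\big(\bar\phi(S^{1/p^e})\big) \;=\; \phi\big((\Tr_{L/K})^{1/p^e}(S^{1/p^e})\big) \;=\; \phi\big((\Tr_{L/K}(S))^{1/p^e}\big) \;=\; \phi(R^{1/p^e}),
\]
where the fourth equality uses only that the bijection $(\blank)^{1/p^e}$ carries the set $S^{1/p^e}$ onto $(\Tr_{L/K}(S))^{1/p^e}$, and the last again uses $\Tr_{L/K}(S)=R$. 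Hence $\phi(R^{1/p^e})=R$, \ie $\phi$ is surjective and $(R,\Delta_\phi)$ is sharply $F$-pure, contradicting the hypothesis. Therefore $\Tr_{L/K}(S)\neq R$.

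I do not anticipate a real obstacle: the argument is just Corollary~\ref{cor:trcommutes} together with the elementary image computation above, and it works globally with no reduction to the local or DVR case, since the identity $\Tr_{L/K}\circ\bar\phi=\phi\circ(\Tr_{L/K})^{1/p^e}$ holds identically on $L^{1/p^e}$. The two points deserving a sentence of care are (a) verifying applicability of Corollary~\ref{cor:trcommutes} — handled above by noting $\phi\neq 0$ — and (b) the passage between surjectivity of $\phi$ (resp.\ $\bar\phi$) and sharp $F$-purity of $(R,\Delta_\phi)$ (resp.\ $(S,\Delta_{\bar\phi})$), which is precisely the parenthetical identification recorded in the statement; in the direction actually used, surjectivity of $\phi$ forces sharp $F$-purity because $\Delta_{\phi^n}=\Delta_\phi$ places the (surjective) iterates $\phi^n$ in $I_{\Delta_\phi,\,ne}$.
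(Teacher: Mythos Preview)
Your proof is correct and is essentially the same as the paper's: the paper presents this proposition without a separate proof, simply noting it is the contrapositive of Theorem~\ref{thm:TraceSurjectiveImpliesFPureBehaves}, and the relevant direction of that theorem is proved precisely by the commutativity $\Tr \circ \bar\phi = \phi \circ \Tr^{1/p^e}$ from Corollary~\ref{cor:trcommutes} together with surjectivity of trace, exactly as you do. Your version is slightly more self-contained since it bypasses the $\bQ$-Cartier setup and the $I_{\Delta,e}$ formalism used in Theorem~\ref{thm:TraceSurjectiveImpliesFPureBehaves}, but the underlying argument is identical.
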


\begin{proposition}
\label{prop:purePlusEtaleInCodim1}
 Suppose that $R \subseteq S$ is a module-finite pure (a
   module-finite extension of rings $R \to S$ is pure if and only if
   it splits as a map of $R$-modules) inclusion of normal domains
 that is \etale in codimension one, and set $K = \Frac R$ and $L =
 \Frac S$.  Then the trace map is surjective, \ie $\Tr_{L/K}(S) = R$.
\end{proposition}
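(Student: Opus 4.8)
The plan is to combine the splitting furnished by purity with the identification of the trace map with the ramification divisor in Proposition~\ref{prop:traceram}. Write $Y=\Spec S$, $X=\Spec R$, and $\pi\:Y\to X$. Since $R\subseteq S$ is \etale in codimension one, the torsion sheaf $\Omega_{Y/X}$ vanishes at every codimension-one point of $Y$, so $\Ram_\pi=0$; in particular $R\subseteq S$ is generically separable, so $\Tr_{L/K}\:L\to K$ is nonzero and $\Hom_K(L,K)=L\cdot\Tr_{L/K}$ is one-dimensional over $L$. On the other hand, as $R\to S$ is a module-finite pure extension it splits as a map of $R$-modules: fix an $R$-linear $\beta\:S\to R$ with $\beta(1)=1$. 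Then $\beta(S)$ is an ideal of $R$ containing a unit, so $\beta(S)=R$, and the whole problem reduces to comparing $\beta$ with $\Tr_{L/K}$.

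The key step is to prove that $\beta=\Tr_{L/K}(s\cdot\blank)$ for some $s\in S$, not merely for some $s\in L$. Regarding $\beta$ generically as an element of $\Hom_K(L,K)=L\cdot\Tr_{L/K}$, there is a unique $s\in L$ with $\beta=\Tr_{L/K}(s\cdot\blank)$, and I must check $s\in S$. Since $S$ is normal, $S=\bigcap_{\bq}S_{\bq}$ over the height-one primes $\bq$ of $S$, so it suffices to verify $s\in S_{\bq}$ for each such $\bq$, lying over a height-one prime $\bp$ of $R$. After localizing, the extension becomes a finite \etale extension $R_{\bp}\subseteq S_{\bp}$ (with $S_{\bp}:=S\otimes_R R_{\bp}$) of a DVR by a semilocal Dedekind domain, hence is unramified, and $\beta$ localizes to an element of $\Hom_{R_{\bp}}(S_{\bp},R_{\bp})$. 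Now $\Hom_{R_{\bp}}(S_{\bp},R_{\bp})$ is free of rank one over the (semilocal PID) $S_{\bp}$ since $\Ram_\pi=0$, and by Proposition~\ref{prop:traceram} the trace corresponds under \eqref{eq:mapstodivisors} to the divisor $\Ram_\pi=0$; as an element of a free rank-one module with associated divisor $0$, the trace is therefore a generator of $\Hom_{R_{\bp}}(S_{\bp},R_{\bp})$ over $S_{\bp}$. Hence the localized $\beta$ equals $\Tr_{L/K}(t\cdot\blank)$ with $t\in S_{\bp}$, and one-dimensionality of $\Hom_K(L,K)$ over $L$ forces $s=t\in S_{\bp}\subseteq S_{\bq}$. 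As this holds for every $\bq$, we conclude $s\in S$.

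Once $s\in S$ is established the proof ends immediately: $R=\beta(S)=\Tr_{L/K}(sS)\subseteq\Tr_{L/K}(S)\subseteq R$, whence $\Tr_{L/K}(S)=R$. I expect the only real content --- and the only place requiring care --- to be the codimension-one input, namely that an \etale (\ie unramified) extension of DVRs has trivial different, equivalently that $\Hom_{R_{\bp}}(S_{\bp},R_{\bp})=S_{\bp}\cdot\Tr$. This is precisely Proposition~\ref{prop:traceram} specialized to $\Ram_\pi=0$ (or, classically, the triviality of the different in the unramified case), so no new argument is needed; one only has to check that its hypotheses hold after localizing --- which they do, since an excellent local ring has a dualizing complex and $\pi^{!}\omega_X$ agrees with $\omega_Y$ away from a codimension-two set --- and that purity enters exactly through the existence of the global retraction $\beta$.
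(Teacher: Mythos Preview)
Your proof is correct and follows essentially the same approach as the paper: use that $\Ram_\pi=0$ together with Proposition~\ref{prop:traceram} to see that $\Tr$ generates $\Hom_R(S,R)$ as an $S$-module, then express the splitting coming from purity as $\Tr(s\cdot\blank)$ with $s\in S$ and conclude. The only difference is cosmetic: the paper argues that the $S$-submodule $S\cdot\Tr\subseteq\Hom_R(S,R)$ agrees with $\Hom_R(S,R)$ in codimension one and invokes reflexivity of both to conclude equality globally, whereas you unwind this by writing $\beta=\Tr(s\cdot\blank)$ with $s\in L$ and checking $s\in S_{\bq}$ at each height-one prime---this is exactly the reflexivity argument made explicit.
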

\begin{proof}
 Since $R \subseteq S$ is \etale in codimension one, by Proposition
 \ref{prop:traceram}, the $S$-submodule of $\Hom_R(S, R)$ generated by
 $\Tr = \Tr_{L/K}|_{S}$ agrees with $\Hom_R(S,R)$ in codimension one.  But both modules are reflexive (since $R$ is normal) so $\Tr$ generates $\Hom_R(S, R)$ as an $S$-module.

By the purity hypothesis, there exists a surjective $\phi \in
\Hom_R(S, R)$.  We know that there exists some $s \in S$ such
that $\phi( \blank) = \Tr(s \cdot \blank)$ since $\Tr$ generates $\Hom_R(S, R)$.  Therefore, $\Tr$ must also be  surjective as desired.
\end{proof}

\begin{remark}
The formula in Corollary \ref{cor:IfTraceSurjectiveThenIntersection} for $\Delta_{X}\geq 0$, namely
\[
 \O_X \cap \tau_b(Y; \Delta_Y, (\ba \cdot \O_Y)^t) = \tau_b(X; \Delta_X, \ba^t) \, \, ,
\]
was known to hold previously in the case of an extension of $F$-finite normal domains that was both pure and \etale in codimension one; see \cite[Theorem 3.3]{HaraTakagiOnAGeneralizationOfTestIdeals}.  We see from above that the hypotheses of Corollary \ref{cor:IfTraceSurjectiveThenIntersection} are already satisfied in this setting.  Therefore, Theorem~\ref{thm:TraceTestIdealFormula} should be viewed as a strict generalization of the previously known formula.
\end{remark}


\begin{theorem}
\label{thm:surjectivityOfTraceViaTestIdeals}
 Suppose that $R \subseteq S$ is a module-finite inclusion of $F$-finite normal domains with corresponding map of schemes $\pi \: Y \rightarrow X$.  Further suppose that $\Delta_X$ is an effective divisor on $X$ satisfying the following conditions
\begin{itemize}
 \item[(i)]  $\Delta_Y := \pi^* \Delta_X - \Ram_{\pi}$ is effective,
 \item[(ii)]  $(X, \Delta_X)$ is strongly $F$-regular
\end{itemize}
Then trace is surjective, \ie $\Tr_{Y/X}(S) = R$.
\end{theorem}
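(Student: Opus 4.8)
The plan is to deduce the statement directly from the trace test-ideal formula of Corollary~\ref{cor:TraceTestIdealFormula}, combined with the standard fact that a triple is strongly $F$-regular exactly when its big test ideal is the unit ideal. First, note that since $\Ram_{\pi}$ appears in hypothesis~(i), the inclusion $R \subseteq S$ is necessarily generically separable (this is built into Definition~\ref{def:ramdiv}); hence the field trace $\Tr = \Tr_{Y/X}$ is a non-zero $\O_X$-linear map $\pi_*\O_Y \to \O_X$, and in particular $\Tr(S) \subseteq R$, since $R$ is normal and the trace of an element of $S$ is a sum of elements integral over $R$ lying in $K = \Frac R$. It therefore suffices to prove the reverse inclusion $R \subseteq \Tr(S)$.

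With $\ba = \O_X$ and $t = 1$, the hypotheses of Corollary~\ref{cor:TraceTestIdealFormula} are satisfied: $\pi$ is separable, $\Tt = \Tr$, and $\Delta_Y = \pi^*\Delta_X - \Ram_{\pi}$ is effective by~(i). Hence that corollary gives
\[
\Tr\bigl(\tau_b(Y; \Delta_Y)\bigr) = \tau_b(X; \Delta_X).
\]
Since $\Delta_Y \geq 0$, the big test ideal $\tau_b(Y;\Delta_Y)$ is a genuine non-zero ideal of $S$, so the left-hand side lies in $\Tr(S)$; thus $\tau_b(X;\Delta_X) \subseteq \Tr(S)$.

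Finally I would invoke hypothesis~(ii): strong $F$-regularity of $(X,\Delta_X)$ says that for every maximal ideal $\m$ of $R$ the localization $(\Spec R_\m, \Delta_X|_{R_\m})$ is strongly $F$-regular, which for a local ring is equivalent to $\tau_b(\Spec R_\m; \Delta_X|_{R_\m}) = R_\m$. Since formation of $\tau_b$ commutes with localization, $\tau_b(X;\Delta_X)$ localizes to the unit ideal at every maximal ideal of $R$ and hence equals $R$. Combining the three steps, $R = \tau_b(X;\Delta_X) \subseteq \Tr(S) \subseteq R$, so $\Tr(S) = R$, as desired.

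In truth there is no serious obstacle: all the work is already packaged in Corollary~\ref{cor:TraceTestIdealFormula}. The only points needing (routine) attention are the reduction to local rings in order to apply the criterion ``$\tau_b(X;\Delta_X) = \O_X$ if and only if $(X,\Delta_X)$ is strongly $F$-regular'', and the elementary observation $\Tr(\pi_*\O_Y) \subseteq \O_X$ that makes the easy inclusion $\Tr(S) \subseteq R$ available from the outset.
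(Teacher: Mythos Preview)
Your proof is correct and follows essentially the same approach as the paper: apply Corollary~\ref{cor:TraceTestIdealFormula} to get $\Tr(\tau_b(Y;\Delta_Y)) = \tau_b(X;\Delta_X)$, use (ii) to conclude $\tau_b(X;\Delta_X) = R$, and use (i) to ensure $\tau_b(Y;\Delta_Y) \subseteq S$ so that the left-hand side lies in $\Tr(S)$. The paper's proof is more terse but identical in substance; your added justifications (separability, $\Tr(S)\subseteq R$, localization for the $F$-regularity criterion) are routine elaborations rather than a different strategy.
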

\begin{proof}
 Since $(X, \Delta_X)$ is strongly $F$-regular we know that $\tau_b(R; \Delta_X) = R$.  Then $$\Tr_{Y/X}(\tau_b(S; \Delta_Y)) = \tau_b(R; \Delta_X) = R$$ by Corollary \ref{cor:TraceTestIdealFormula}.  Since $\tau_b(S; \Delta_Y) \subseteq S$ by (i), the proof is complete.
\end{proof}

The following corollaries to Theorem \ref{thm:surjectivityOfTraceViaTestIdeals} are easier to digest and extend known results.

\begin{corollary}
\label{cor:surjectivityOfTraceOverStronglyFRegular}
Suppose that $R \subseteq S$ is a module-finite inclusion of
$F$-finite normal domains which is \etale in codimension one with $K = \Frac R$ and $L = \Frac S$.  If $R$
is strongly \mbox{$F$-regular}, then $\Tr_{L/K}(S) = R$.
\end{corollary}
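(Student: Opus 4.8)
The plan is to obtain this as an immediate consequence of Theorem~\ref{thm:surjectivityOfTraceViaTestIdeals} with the divisor $\Delta_X$ taken to be $0$. First I would note that since $R \subseteq S$ is \'etale in codimension one it is, in particular, generically \'etale and hence generically separable, so that the trace map $\Tr_{L/K}$ and the ramification divisor $\Ram_\pi$ of $\pi \: Y = \Spec S \to X = \Spec R$ are defined. The key elementary observation is that $\Ram_\pi = 0$: for every prime divisor $E$ on $Y$ the morphism $\pi$ is \'etale at the generic point of $E$, so the stalk $\Omega_{Y/X,E}$ vanishes, and therefore $\ord_E(\Ram_\pi) = \length_{\O_{Y,E}}(\Omega_{Y/X,E}) = 0$ by Definition~\ref{def:ramdiv}.

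With $\Delta_X = 0$ in hand, both hypotheses of Theorem~\ref{thm:surjectivityOfTraceViaTestIdeals} are then immediate. Hypothesis~(i) reads $\Delta_Y = \pi^*\Delta_X - \Ram_\pi = \pi^* 0 - 0 = 0$, which is effective. Hypothesis~(ii) requires $(X, \Delta_X) = (X, 0)$ to be strongly $F$-regular, and (taking $\ba = R$, $t = 1$ in the definitions) this is precisely the hypothesis that $R$ is strongly $F$-regular. Applying Theorem~\ref{thm:surjectivityOfTraceViaTestIdeals} therefore gives $\Tr_{Y/X}(S) = R$, that is, $\Tr_{L/K}(S) = R$.

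I expect no real obstacle here: all the work has already been done in Theorem~\ref{thm:surjectivityOfTraceViaTestIdeals} (which rests on Corollary~\ref{cor:TraceTestIdealFormula}, hence ultimately on the transpose criterion Theorem~\ref{thm:mainliftingcriterion}), and the only thing to verify is the harmless point that \'etaleness in codimension one kills $\Ram_\pi$, legitimizing the choice $\Delta_X = 0$. As a remark, one could instead route through Proposition~\ref{prop:purePlusEtaleInCodim1}, using that a strongly $F$-regular ring is a splinter so that $R \hookrightarrow S$ splits; but that argument appeals to the splinter property, which lies outside the present development, so the deduction from Theorem~\ref{thm:surjectivityOfTraceViaTestIdeals} is the cleaner one.
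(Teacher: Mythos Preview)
Your proposal is correct and matches the paper's own proof exactly: the paper also deduces the result immediately from Theorem~\ref{thm:surjectivityOfTraceViaTestIdeals} (implicitly with $\Delta_X = 0$, using that $\Ram_\pi = 0$), and even records the same alternative route through the splinter property and Proposition~\ref{prop:purePlusEtaleInCodim1} that you mention in your closing remark.
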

\begin{proof}
This result immediately follows from Theorem \ref{thm:surjectivityOfTraceViaTestIdeals}.  Alternately, one can use the fact that every strongly $F$-regular ring is a splinter ring (see \cite{HochsterHunekeTightClosureOfParameterIdealsAndSplitting}) so that $R \subseteq S$ splits, and then we can apply Proposition \ref{prop:purePlusEtaleInCodim1}.
\end{proof}

Example \ref{ex:artinsD4examplecover} below demonstrates that, in Corollary \ref{cor:surjectivityOfTraceOverStronglyFRegular}, one cannot weaken the condition that $R$ is strongly $F$-regular to the condition that $R$ is $F$-pure.  In particular, condition (ii) from Theorem \ref{thm:surjectivityOfTraceViaTestIdeals} \emph{cannot} be replaced by the condition that ``$(X, \Delta_X)$ is sharply $F$-pure.''

We now explain how the $F$-pure threshold can be used to show that the
trace map is surjective.  Given a strongly $F$-regular ring $R$ and a
Weil divisor $B$ on $X = \Spec R$, the \emph{$F$-pure threshold} of
$(X,B)$ is defined to be the real number
$$\fpt(X, B) := \sup \{ \lambda \, \, | \, \, (X, \lambda B) \text{ is
  strongly $F$-regular} \} \, \, ;$$
 see \cite{TakagiWatanabeFPureThresh} and \cite{MustataTakagiWatanabeFThresholdsAndBernsteinSato}.  It is a characteristic $p > 0$ analog of the log canonical threshold.

\begin{corollary}
Suppose that $R \subseteq S$ is a module-finite inclusion of $F$-finite
normal domains with corresponding map of schemes $\pi \: Y \rightarrow
X$ tamely ramified in codimension one.  Further suppose that $R$ is
strongly $F$-regular and that the reduced branch locus $B$ of $\pi$ has $F$-pure threshold $c := c(X, B)$ inside $X$.  If for every Weil divisor $F$ on $Y$ with ramification index $n$ over $X$, we have that $c > {n - 1 \over n}$, then $\Tr_{Y/X}(S) = R$.
\end{corollary}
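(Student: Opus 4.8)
The plan is to deduce this from Theorem \ref{thm:surjectivityOfTraceViaTestIdeals}: I will exhibit an effective $\bQ$-divisor $\Delta_X$ on $X$ for which $\Delta_Y := \pi^*\Delta_X - \Ram_\pi$ is effective and $(X,\Delta_X)$ is strongly $F$-regular, and the natural choice is $\Delta_X = \lambda B$ for a suitable rational number $\lambda < c$.

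First I would record the shape of $\Ram_\pi$ under the tameness hypothesis. Since $\pi$ is tamely ramified in codimension one, for each prime divisor $F$ on $Y$ with ramification index $n_F$ over its image $D \subseteq X$, the coefficient of $F$ in $\Ram_\pi$ is exactly $n_F - 1$ (in the tame case the length of $\Omega_{Y/X}$ at the generic point of $F$ equals $n_F-1$; compare Remark \ref{rmk:ramindexminusonelessthancoeffinramdiv}, the proof of Proposition \ref{prop:traceram}, and \cite[Chapter III]{SerreLocalFields}). In particular $\Ram_\pi$ is supported on the finitely many prime divisors $F$ lying over components of the reduced branch locus $B$. Next, using that $B$ is reduced, for such an $F$ lying over a component $D$ of $B$ one has $\ord_F(\pi^*(\lambda B)) = \lambda\cdot\ord_F(t) = \lambda\, n_F$ where $t$ is a uniformizer of $\O_{X,D}$ (by the description of pull-back in Section \ref{sec:divisors}), while $\ord_F(\Ram_\pi) = n_F - 1$; for $F$ not lying over $B$ both orders vanish. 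Hence $\pi^*(\lambda B) - \Ram_\pi$ is effective precisely when $\lambda \ge \tfrac{n_F-1}{n_F}$ for every prime divisor $F$ with $n_F > 1$.

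There are only finitely many such $F$ (each lies over one of the finitely many components of $B$, and $\pi$ is finite), so $\mu := \max\big(\{0\}\cup\{(n_F-1)/n_F : n_F>1\}\big)$ is a finite number, and the hypothesis $c > (n-1)/n$ for each such $F$ gives $\mu < c$. I would then choose a rational $\lambda$ with $\mu \le \lambda < c$. By the definition of the $F$-pure threshold $c = \fpt(X,B)$, the pair $(X,\lambda B)$ is strongly $F$-regular, and by the computation above $\Delta_Y = \pi^*(\lambda B) - \Ram_\pi \ge 0$. Thus both hypotheses (i) and (ii) of Theorem \ref{thm:surjectivityOfTraceViaTestIdeals} hold with $\Delta_X = \lambda B$, and the theorem yields $\Tr_{Y/X}(S) = R$.

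The only point requiring genuine care — and hence the main obstacle — is the identification of the coefficient of $F$ in $\Ram_\pi$ with $n_F - 1$ under tame ramification (equivalently, that the different exponent equals the ramification index minus one in the tame case). For the argument only the inequality $\ord_F(\Ram_\pi) \le n_F - 1$ is actually needed, which, combined with the always-valid reverse inequality $\ord_F(\Ram_\pi)\ge n_F-1$ of Remark \ref{rmk:ramindexminusonelessthancoeffinramdiv}, pins it down; alternatively one cites \cite[Chapter III]{SerreLocalFields} directly. Everything else is routine bookkeeping with divisor pull-backs together with the finiteness of the ramification data.
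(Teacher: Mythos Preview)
Your proposal is correct and follows essentially the same approach as the paper: both arguments set $\Delta_X = \lambda B$ for a rational $\lambda$ just below $c$ (the paper writes $\lambda = c - \varepsilon$) and verify conditions (i) and (ii) of Theorem~\ref{thm:surjectivityOfTraceViaTestIdeals}. Your version is considerably more explicit than the paper's two-line proof, in particular spelling out why tameness forces $\ord_F(\Ram_\pi) = n_F - 1$ and hence why $\lambda \geq (n_F-1)/n_F$ suffices for effectivity of $\Delta_Y$.
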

\begin{proof}
Set $\Delta_X$ to be the $\bQ$-divisor $(c -\varepsilon) B$ for some $1 \gg \varepsilon > 0$.  The relation between the ramification index and the log canonical threshold is exactly what is needed to satisfy conditions (i) and (ii) from Theorem \ref{thm:surjectivityOfTraceViaTestIdeals}.
\end{proof}

Sometimes, in the case that $B$ has sufficiently nice singularities (regardless of how it is embedded into $X$), we can also show that $\Tr_{Y/X}(S) = R$.

\begin{corollary}
Suppose that $R \subseteq S$ is a module-finite inclusion of $F$-finite normal domains with corresponding map of schemes $\pi \: Y \rightarrow X$ and suppose that $R$ is strongly $F$-regular and \mbox{$\bQ$-Gorenstein} with index not divisible by $p > 0$.  Suppose that $B$ is the reduced branch locus of $\pi$.  Further suppose that $B \subseteq D$ where $D$ is an integral divisor satisfying all of the following:
\begin{itemize}
 \item $D$ is Cartier,
 \item $D$ is a reduced scheme with $F$-pure singularities, and
 \item $D$ is S2 and Gorenstein in codimension one. (This last condition follows, for example, if $D$ is normal or $R$ is Gorenstein.)
\end{itemize}
If $R \subseteq S$ has tame ramification in codimension one, then $\Tr_{Y/X}(S) = R$.
\end{corollary}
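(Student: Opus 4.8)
The plan is to deduce the statement from Theorem~\ref{thm:surjectivityOfTraceViaTestIdeals} by choosing the boundary divisor $\Delta_X$ cleverly. Since the conclusion $\Tr_{Y/X}(S)=R$ is local on $X$, I would first reduce to the case $X=\Spec R$ with $R$ local and $Y=\Spec S$ with $S$ semi-local; in particular $D=\Div_X(f)$ for some $f\in R$, and $\pi$ is (generically) separable by the tameness hypothesis, so $\Ram_\pi$ and the trace are defined. The boundary to use is $\Delta_X:=(1-\varepsilon)D$ for a sufficiently small positive rational number $\varepsilon$ whose denominator is prime to $p$; note $K_X+\Delta_X$ is then $\bQ$-Cartier with index prime to $p$, since $K_X$ is and $D$ is Cartier. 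It then suffices to check the two hypotheses of Theorem~\ref{thm:surjectivityOfTraceViaTestIdeals}: that $\Delta_Y:=\pi^*\Delta_X-\Ram_\pi$ is effective, and that $(X,\Delta_X)$ is strongly $F$-regular.

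For the effectivity of $\Delta_Y$ I would compare coefficients along each prime divisor $E$ of $Y$. If $\pi$ is unramified along $E$ then $\ord_E(\Ram_\pi)=0$ while $\ord_E((1-\varepsilon)\pi^*D)\ge 0$. If $\pi$ is ramified along $E$, with ramification index $n$, then $\pi(E)$ lies in the branch locus $B\subseteq D$, hence in $\Supp D$; as $D$ is reduced this forces $\ord_E(\pi^*D)=n$, while Remark~\ref{rmk:ramindexminusonelessthancoeffinramdiv} gives $\ord_E(\Ram_\pi)\le n-1$, so the $E$-coefficient of $\Delta_Y$ is at least $(1-\varepsilon)n-(n-1)=1-\varepsilon n$. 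Only finitely many ramification indices $n$ occur (as $B$ has finitely many components and $S$ is module-finite over $R$), so choosing $\varepsilon$ no larger than the reciprocal of each of them makes $\Delta_Y\ge 0$.

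The heart of the argument is strong $F$-regularity of $(X,(1-\varepsilon)D)$, which I would obtain by first showing $(X,D)$ is sharply $F$-pure. The point is that, since $D$ is Cartier and $S_2$ (and reduced, Gorenstein in codimension one), Grothendieck duality for the Cartier divisor $D$ yields $\omega_D\cong(\omega_X\otimes\O_X(D))|_D$, so that the adjunction different of $D$ in $X$ vanishes, \ie $K_D\sim(K_X+D)|_D$ with no correction term. Because $R$ is $\bQ$-Gorenstein of index prime to $p$, the module $\Hom_R(F^e_*R,R)$ is free of rank one for suitable $e$, and the $p^{-e}$-linear restriction formalism of \cite{SchwedeFAdjunction} together with a Fedder-type criterion (\cf \cite{FedderFPureRat}) identifies sharp $F$-purity of $(X,D)$ near $D$ with that of the pair obtained by restricting to $D$; since the different vanishes this pair is $(D,0)$, and sharp $F$-purity of $(D,0)$ is precisely $F$-purity of the scheme $D$, which is assumed. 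Away from $D$, the pair $(X,D)$ agrees with $(X,0)$, which is sharply $F$-pure because $R$ is strongly $F$-regular. Granting that $(X,D)$ is sharply $F$-pure, one has $\fpt(X,D)\ge 1$, hence $(X,(1-\varepsilon)D)$ is strongly $F$-regular for every $\varepsilon>0$ (by the basic theory of $F$-pure thresholds \cite{TakagiWatanabeFPureThresh}, or concretely by iterating a surjective $p^{-e}$-linear map witnessing the sharp $F$-purity of $(X,D)$ and using strong $F$-regularity of $R$ to absorb an arbitrary test element). With both hypotheses verified, Theorem~\ref{thm:surjectivityOfTraceViaTestIdeals} gives $\Tr_{Y/X}(S)=R$.

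The main obstacle I anticipate is the sharp $F$-purity of $(X,D)$: this is where essentially every hypothesis is used, with $D$ Cartier and $S_2$ (plus Gorenstein in codimension one) forcing the adjunction different to vanish, so that the intrinsic $F$-purity of $D$ is the right input (generalizing the regular-$X$ situation behind Corollary~\ref{cor:EasyTraceSurjectivityViaFPurity}), and $R$ being $\bQ$-Gorenstein of index prime to $p$ making the duality/Fedder comparison available. The local reduction, the ramification computation, and the threshold perturbation are routine in comparison.
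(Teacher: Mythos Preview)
Your approach is exactly that of the paper: set $\Delta_X=(1-\varepsilon)D$, use $F$-inversion of adjunction (from \cite{HaraWatanabeFRegFPure} in the normal case, \cite{SchwedeFAdjunction} in general) to get sharp $F$-purity of $(X,D)$, perturb via \cite{TakagiWatanabeFPureThresh} to strong $F$-regularity of $(X,(1-\varepsilon)D)$, and then invoke Theorem~\ref{thm:surjectivityOfTraceViaTestIdeals}.

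One correction is needed in your effectivity check for $\Delta_Y$. You write that Remark~\ref{rmk:ramindexminusonelessthancoeffinramdiv} gives $\ord_E(\Ram_\pi)\le n-1$, but that remark states the \emph{opposite} inequality $\ord_E(\Ram_\pi)\ge n-1$. What you actually need---and what the paper tacitly uses---is that tame ramification in codimension one forces the equality $\ord_E(\Ram_\pi)=n-1$ (this is the classical computation of the different for a tamely ramified extension of DVR's, e.g.\ \cite[Chapter III]{SerreLocalFields}). With that fix your coefficient estimate $(1-\varepsilon)n-(n-1)=1-\varepsilon n\ge 0$ goes through as written.
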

\begin{proof}
 Since $D$ is Cartier and $R$ is $\bQ$-Gorenstein with index not divisible by $p$, it follows that $(X, D)$ is $F$-pure by $F$-inversion of adjunction. To verify this, see \cite[Theorem 4.9]{HaraWatanabeFRegFPure} for $F$-inversion of adjunction in the case where $D$ is a normal scheme.  Alternately, in the case that $D$ is non-normal, the $F$-purity of the pair $(X, D)$ follows from \cite[Proposition 7.2]{SchwedeFAdjunction}.  But then $(X, (1-\varepsilon)D)$ is strongly $F$-regular for all $0 \leq \varepsilon < 1$ by \cite[Proposition 2.2(5)]{TakagiWatanabeFPureThresh}.

Since $R \subseteq S$ has tame ramification in codimension one, it is straightforward to verify that $\pi^* (1 - \varepsilon) D \geq \Ram_{\pi}$ for all $0 < \varepsilon \ll 1$.  Thus both conditions (i) and (ii) from Theorem \ref{thm:surjectivityOfTraceViaTestIdeals} are satisfied for $\Delta_X = (1 - \varepsilon)D$.
\end{proof}

As a special case, we immediately obtain:

\begin{corollary}
\label{cor:EasyTraceSurjectivityViaFPurity}
 Suppose that $R \subseteq S$ is a module-finite inclusion of $F$-finite normal domains which is tamely ramified in codimension one.  Consider the corresponding map of schemes $\pi \: Y \rightarrow X$.  Further suppose that $R$ is regular and that the reduced branch locus of $\pi$ is a divisor with $F$-pure singularities (for example, if the branch locus is a simple normal crossings divisor).  Then $\Tr_{Y/X}(S) = R$.
\end{corollary}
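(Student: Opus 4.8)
The plan is to obtain this as an immediate specialization of the preceding Corollary, taking $D := B$ to be the reduced branch locus of $\pi$ itself (so the inclusion $B \subseteq D$ is trivial). First I would note that a regular ring $R$ is automatically strongly $F$-regular and $\bQ$-Gorenstein with index $1$, and $1$ is not divisible by $p$; hence the standing hypotheses of the preceding Corollary are satisfied for $\pi \: Y \to X$.

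Next I would verify the three bulleted conditions imposed there on $D = B$. Since $X = \Spec R$ is regular, every Weil divisor on $X$ is Cartier, so $D$ is Cartier. By hypothesis the reduced branch locus of $\pi$ is a divisor with $F$-pure singularities, so $D$ is a reduced scheme with $F$-pure singularities. Finally, since $R$ is regular it is in particular Gorenstein, so $D$ is $S_2$ and Gorenstein in codimension one by the remark accompanying that Corollary. As $R \subseteq S$ is tamely ramified in codimension one by hypothesis, all hypotheses of the preceding Corollary hold, and we conclude $\Tr_{Y/X}(S) = R$.

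For the reader who prefers to see the argument unwound, one can instead invoke Theorem~\ref{thm:surjectivityOfTraceViaTestIdeals} directly with $\Delta_X := (1-\varepsilon)B$ for $0 < \varepsilon \ll 1$: because $X$ is regular and $B$ is a reduced divisor with $F$-pure singularities, the pair $(X,B)$ is sharply $F$-pure (via $F$-inversion of adjunction for Cartier divisors on a $\bQ$-Gorenstein strongly $F$-regular scheme), whence $(X,(1-\varepsilon)B)$ is strongly $F$-regular; tame ramification in codimension one guarantees $\pi^*(1-\varepsilon)B \geq \Ram_\pi$ for all sufficiently small $\varepsilon > 0$, so conditions (i) and (ii) of Theorem~\ref{thm:surjectivityOfTraceViaTestIdeals} are met and surjectivity of trace follows.

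There is no genuine obstacle here; the entire content lies in Theorem~\ref{thm:surjectivityOfTraceViaTestIdeals} and the preceding Corollary. The only point needing a moment's attention is the parenthetical simple normal crossings case, which is subsumed because a simple normal crossings divisor on a regular scheme has $F$-pure singularities (sharp $F$-purity of such pairs), so it indeed qualifies as a "divisor with $F$-pure singularities."
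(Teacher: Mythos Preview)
Your proof is correct and matches the paper's approach exactly: the paper introduces this corollary with the sentence ``As a special case, we immediately obtain:'' and gives no further argument, so the intended proof is precisely the specialization $D = B$ of the preceding corollary that you spell out. Your verification of the three bulleted conditions (Cartier because $X$ is regular, $F$-pure by hypothesis, $S2$ and Gorenstein in codimension one because $R$ is Gorenstein) is more explicit than anything the paper writes, and your unwound paragraph simply reproduces the proof of the preceding corollary in this special case.
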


\begin{remark}
 Brian Conrad has pointed out to us that a special case of the
 previous corollary, where the branch locus is a simple normal
 crossings divisor, can be obtained as follows.  For simplicity,
 assume that $R$ is a complete regular local ring and that $S$ is a
 complete normal local ring, and also that both have algebraically closed residue fields.  Fix the branch locus to be $\Div(f_1 \cdot \ldots \cdot f_r)$.  One may replace $S$ by a further extension and using \cite[Corollary 5.3, Exp. XIII]{SGA1} (a version of Abhyankar's Lemma)  may assume that $S$ is of the form
 \[
 S = R[T_1, \ldots, T_r]/(T_1^{n_1} - f_1, \ldots, T_r^{n_r} - f_r)
 \]
 where the $n_i$ are integers not divisible by $p$.  But then, in this
 situation $S$ is free as an $R$-module and the degree of $S$ over $R$ is not divisible by $p$.  Thus $\Tr_{Y/X}$ is clearly surjective since $1 \in S$ is sent to a unit in $R$.
\end{remark}

We conclude the body of the paper with the following cautionary
example, originally due to Artin.   It is a finite integral extension
of normal domains which has non-surjective trace map but is \etale in
codimension one.  In particular, the example shows that Theorem \ref{thm:surjectivityOfTraceViaTestIdeals}(ii) cannot be
substantially weakened.  Also note that this example is a log terminal
singularity which is $F$-pure but not strongly $F$-regular.

Notice that Examples \ref{ex:nottame1} and \ref{ex:WithoutTraceSurjectiveThingsAreFalse} demonstrate that the relation \eqref{eqn:multIdealFact} cannot hold for multiplier ideals in positive characteristic.  On the other hand, the example below shows that the multiplier ideal in positive characteristic cannot
satisfy the kinds of transformation rules for test ideals from Corollary \ref{cor:TraceTestIdealFormula}, even in the separable case.

\begin{example}[Double cover of a certain $\mathrm{D}_{4}$ singularity with wild ramification in characteristic 2; see \cite{ArtinWildlyRamifiedZ2Actions}]
\label{ex:artinsD4examplecover}
Consider $X = \Spec R$ where
\[
R = \mathbb{F}_{2} [[ x,y,z]] / \langle z^{2} + xyz + xy^{2} + x^{2}y \rangle \, .
\]
One can verify that this is in fact a rational double point of type
$\mathrm{D}_{4}$; see \cite{LipmanRationalSingularitiesWithApps}.  As
a sufficient theory of resolutions of singularities holds for surfaces
in any characteristic, we know that $X$ is log terminal or that the
multiplier ideal $\mJ(X) = \O_{X} = R$ (\cf
\cite{TuckerIntegrallyClosedIdealsOnLTSurfaces}) is trivial.  In \cite{ArtinWildlyRamifiedZ2Actions}, it is shown that the extension ring
\[
S = R[u,v]/ \langle u^{2} + xu + x, v^{2} + yv + y, z + xv + yu \rangle
\]
gives rise to a finite morphism $\pi \: Y = \Spec S \to X$ of degree two which is {\'e}tale in codimension one.  Furthermore, one may verify directly that
\[
S = \mathbb{F}_{2}[[u,v]]
\]
is in fact a power series ring in the variables $u,v$. Thus, in particular, $\mJ(Y) = \O_{Y} = S$.  Artin also shows that $R$ can be viewed as the invariant subring of $S$ for an action of $\mathbb{Z}/2\mathbb{Z}$.
Using his notation and denoting the action by the nontrivial element with a bar, we have
\[
\bar{u} = u + \frac{1}{1+u} u^{2} \qquad \bar{v} = v + \frac{1}{1+v} v^{2} \, \, .
\]
He shows $R$ is the complete subring of $S$ determined by the elements
\[
x = u + \bar{u} \qquad y = v + \bar{v} \qquad z = u \bar{v} + \bar{u} v \, \, .
\]
Direct calculation then gives
\[
\Tr(u) = x \qquad \Tr(v) = y \qquad \Tr(uv) = xy + z
\]
so we see that the $\Tr \: S \to R$ has image $\langle x,y,z \rangle$ and is not surjective.  In particular, we have
\[
\Tr(\mJ(Y)) \neq \mJ(X) \, \, .
\]

On the other hand, $R$ is $F$-pure by Fedder's criterion; see
\cite{FedderFPureRat}.  It also fails to be strongly $F$-regular, and it follows that $\tau_b(R) = \langle x, y, z \rangle$ since $R$ has an isolated singularity at the maximal ideal.  We see that
\[
\Tr(\mJ(Y)) = \Tr(\tau_b(Y)) = \tau_b(R) = \langle x, y, z \rangle
\,\, ,
\]
which one may also verify directly.
This example gives credence to the maxim that test ideals in positive characteristic have more desirable behavior and properties than multiplier ideals.
\end{example}

\section{Further speculation about test ideals and trace maps}
\label{sec:furtherSpeculation}

In the previous section we explored when the trace map
is surjective, but many open problems remain.  Based on the previous section, the following converse to Corollary \ref{cor:IfTraceSurjectiveThenIntersection} is a natural question.

\begin{question}
Suppose we are in the setting of \ref{set.SeparableTestIdealsSetting} and that
\[
K(X) \cap \pi_* \tau_b(Y; \Delta_Y, (\ba \cdot \O_Y)^t) = \tau_b(X; \Delta_X, \ba^t)
\]
holds (perhaps for every choice of $\Delta_X$).  Does it then follow that $\Tr_{Y/X} \: \pi_{*}\O_{Y} \to \O_{X}$ is surjective?
\end{question}

Let us assume now in addition to \ref{set.SeparableTestIdealsSetting}
that $X= \Spec R$ and $Y = \Spec S$ are affine.  As we saw in the
previous section, in many cases $\Tr \: S \to R$ is necessarily
surjective when $R$ is strongly $F$-regular.  It would be interesting
to know if similar properties might in fact characterize strong $F$-regularity.

\begin{question}
If one knows that $\Tr \: S \to R$ is surjective for some carefully chosen (but at this point unspecified) collection of finite separable extensions $R \subseteq S$, does this imply that $R$ is strongly $F$-regular?
\end{question}

\noindent
This question is likely related to various conjectures about the
equivalence of splinter rings and weak $F$-regularity; see
\cite{HochsterHunekeTightClosureOfParameterIdealsAndSplitting} and
\cite{SinghQGorensteinSplinter} for precise statements and further details.


We conclude with a question involving the test ideal and extension under flat morphisms.
Given a finite surjective morphism $\pi \: Y \to X$ such that $\pi$ is (locally) pure, \etale in codimension one, and flat, it is known that
\[
\tau_b(X) \cdot \O_Y = \tau_b(Y) \, \, ;
\]
see \cite[Theorem 3.3]{HaraTakagiOnAGeneralizationOfTestIdeals}, \cite{BravoSmithBehaviorOfTestIdealsUnderSmooth} and \cite{HochsterHunekeFRegularityTestElementsBaseChange}.  Thus, one is left with the following natural question.

\begin{question}
Given a finite flat surjective morphism $\pi \: Y \to X$ and a
$\Q$-divisor $\Delta_{X}$ on $X$,  is there a way in which to extend
$\tau_b(X, \Delta_X)$ to $Y$ and thereby obtain $\tau_b(Y,
\Delta_{Y})$ for a suitably chosen $\Q$-divisor $\Delta_{Y}$ on $Y$
without assuming that $\pi$ is \etale in codimension 1?
If possible, what is the appropriate way to define $\Delta_{Y}$?
\end{question}

In a sense, one can view the above question as asking for an optimal
version of Theorem~\ref{thm:reallynonoptimalextensionresult}.
However, we are at present unsure whether it is reasonable to expect such a statement to hold, let alone what an appropriate formulation might be.  Coincidentally, we are unaware of any such statements for multiplier ideals even in characteristic zero (for which similar questions could be posed as well).

\bibliographystyle{skalpha}
\bibliography{CommonBib}  

\end{document}